\begin{document}
\begin{sloppypar}

\newcommand{\tl}[1]{\multicolumn{1}{l}{#1}} 
\renewcommand{\appendixname}{\appendix~{\thechapter}~}
\renewcommand{\theequation}{\thesection.\arabic{equation}}
\newtheorem{lm}{Lemma}
\newtheorem{defi}{Definition}
\newtheorem{prob}{Problem}
\newtheorem{thm}{Theorem}
\newtheorem{pro}{Proposition}
\newtheorem{exmp}{Example}
\newtheorem{rmk}{Remark}
\newtheorem{cor}{Corollary}
\newtheorem{con}{Conjecture}
\newcommand\T{\rule{-0.5pt}{2.6ex}}
\newcommand\B{\rule[-1.2ex]{0pt}{0pt}}
\vskip 0.5cm
\baselineskip 14pt
\parskip 8pt
\allowdisplaybreaks[4]

\title{Resonance properties and chaotic dynamics of a three-dimensional discrete logistic ecological system within the neighborhoods of bifurcation points\thanks{This work is supported
by the National Science Foundation of China (\#11701476, \#12171337), the National Science Foundation of
Sichuan Province(\#2023NSFSC0064).}}

\author{{\sc Yujiang Chen}$^{a}$,~{\sc Lin Li}$^{b}$,~{\sc Lingling Liu}$^{c}$,~{\sc Zhiheng Yu}$^a$\footnote{Corresponding to: yuzhiheng9@163.com (Zhiheng Yu) 
}
\\
$^a${\small School of Mathematics, Southwest Jiaotong University,}
\\
{\small Chengdu, Sichuan 611756, P. R. China}
\\
$^b${\small College of Data Science, Jiaxing University,}
\\
{\small Jiaxing, Zhejiang 314001, P. R. China}
\\
$^c${\small School of Sciences, Southwest Petroleum University,}
\\
{\small Chengdu, Sichuan 610500, P. R. China}
}
\date{}

\maketitle
\vskip -1.2cm
\begin{abstract}
In this paper, we delve into the dynamical properties of a class of three-dimensional logistic ecological models. By using the complete discriminant theory of polynomials, we first give a topological classification for each fixed point and investigate the stability of corresponding system near the fixed points. Then employing the bifurcation and  normal form theory, we discuss all possible codimension-1 bifurcations near the fixed points, i.e., transcritical, flip, and Neimark-Sacker bifurcations, and further prove that the system can undergo codimension-2 bifurcations, specifically 1:2, 1:3, 1:4 strong resonances and weak resonance Arnold tongues. Additionally, chaotic behaviors in the sense of Marotto are rigorously analyzed. Numerical simulations are conducted to validate the theoretical findings and illustrate the complex dynamical phenomena identified.
\vskip 0.2cm
{\bf Keywords}: Stability; Bifurcation; Strong resonance; Arnold tongue; Marotto chaos.

\vskip 0.2cm
{\bf AMS(2010) Subject Classifications:} 37G10; 39A28; 58K50; 68W30

\end{abstract}


\section{Introduction}

The study of population dynamics in ecosystems is a highly regarded field. By investigating the population dynamics of species, we can understand trends in species variation, and then make effective control and management of species numbers. Interactions between populations have a significant impact on population dynamics. In ecosystems, interactions among populations can be categorized into predation, symbiosis and competition (\cite{Holland,Huisman1,Huisman2,Kaiser,Lotka,Murray,Volterra}). These interactions produce complex dynamical properties such as bifurcations and chaos (\cite{Arancibia,Chuang,Collet,May,Mira}).

The earliest model describing the interactions between two species (predator and prey) in ecosystem is the Lotka-Volterra model by Lotka and Volterra (\cite{Lotka,Volterra}). In 1965, Holling introduced a more realistic predator-prey model, which proposed functional responses of different species to simulate predation phenomena (\cite{Holling}). Since then, more and more scholars have studied the dynamic properties of the Lotka-Volterra model (\cite{Danca,Kasarinoff,Kuang,Zhu}).

In $2019$,  Vidiella et al. (see \cite{Vidiella}) proposed a discrete-time logistic ecological system
\begin{eqnarray}
\left\{
\begin{array}{l}
x_{n+1}=\mu x_n(1-x_n-y_n), \\
y_{n+1}=\beta x_n y_n,
\end{array}
\right.
\label{eq1.1}
\end{eqnarray}
where $x_n$ and $y_n$ represent the population density of prey and predators respectively, $\mu$ represents prey reproduction rate, and $\beta$ represents prey population growth rate. In \cite{Vidiella}, the authors analyzed the global stability of co-extinction fixed points of predators and prey, provided conditions for predator extinction and prey survival, and identified the existence of chaos phenomena in certain parameter regions. These results offer important insights into understanding the stability and dynamical behaviors of system \eqref{eq1.1}. Furthermore, in ecosystems food chains typically comprise multiple trophic levels,
such as the three-tier structure of ``prey-predator-top predator'' (see \cite{Elsadany}). The introduction of top predators not only directly regulates the population size of intermediate predators but may also indirectly affect prey dynamics through resource competition or behavioral regulation, creating complex hierarchical coupling effects. For instance, in marine ecosystems, the interactions between phytoplankton (prey), copepods (predators), and fish (top predators) often lead to dramatic population fluctuations due to environmental disturbances, potentially causing local extinctions(\cite{Uye}). However, the complexity of these multidimensional dynamics is difficult to fully capture with traditional two-dimensional models, highlighting the need for the development of a more realistic three-dimensional system theory framework (\cite{Hastings}).


In $2020$, Alsed\`{a} et al. (see \cite{Alseda}) developed a three-dimensional discrete-time ecological system model based on their previous model \eqref{eq1.1}, considering interactions among three species
\begin{eqnarray}
\left\{
\begin{array}{l}
x_{n+1}=\mu x_n(1-x_n-y_n-z_n), \\
y_{n+1}=\beta y_n (x_n-z_n), \\
z_{n+1}=\lambda y_n z_n.
\end{array}
\right.
\label{eq1.2}
\end{eqnarray}
Here, $x_n$ and $y_n$ also represent the population density of prey and predators respectively, and $z_n$ represents the population density of top predators, $\mu$ denotes the reproduction rate of prey, $\beta$ represents the effective growth rate of predator $y_n$, and $\lambda$ denotes the growth rate of top predator $z_n$ due to the presence of specie $y_n$.
System \eqref{eq1.2} introduced a new top predator specie into the model, which consumes other predators and interferes with prey growth. This new model suggests that top predators consume other predators, while predators prey on prey. Furthermore, top predators exhibit negative interactions with prey growth due to predation or competition.
The authors in \cite{Alseda} provided a complete description of local stability of the fixed points for system \eqref{eq1.2}, and found that the increase of predator pressure on prey leads to chaos through supercritical Neimark-Sacker bifurcation. Then period-doubling bifurcation of the invariant curve occurs. The numerical bifurcation diagram and Lyapunov exponents were also presented to identify the period and chaotic state.
In this paper, based on the work in \cite{Alseda}, we improve the proofs of all codimension-1 bifurcations near the fixed points and further present the codimension-2 bifurcation sequences as the parameter varies around the strong resonance regions, from which an Arnold tongue corresponding to the rotation number $1/5$ is obtained. Then, by the theory of normal form, we prove theoretically the Arnold tongues in the weak resonances such that the system possesses two periodic orbits on the stable invariant closed curve generated from the Neimark-Sacker bifurcation.
Specifically, through transcritical bifurcation analysis, the critical reproduction rate for prey populations transitioning from extinction to persistent survival is determined, providing quantitative metrics for assessing population resilience. The study reveals that period-doubling phenomena induced by Flip bifurcations (e.g., period-2 orbits) correspond to alternating outbreaks and declines in population abundance, while invariant circles generated through Neimark-Sacker bifurcations reflect sustained oscillation patterns in predator-prey systems. The $1:2$, $1:3$ and $1:4$ strong resonance phenomena demarcate abrupt transition zones of dynamical behaviors in parameter space, which may correspond to critical transitions from ordered to chaotic states in ecosystems, providing a theoretical foundation for early warning signals. Within weak resonance regions, Arnold tongues delineate parameter ranges supporting periodic solutions, guiding parameter regulation in ecological management.
Additionally, we further consider the chaotic behaviors in the sense of Marotto when the boundary fixed point has a cubic characteristic polynomial.

The structure of this paper is as follows. In Section 2, we employ the complete polynomial discriminant system (\cite{YANG1,YANG2}) obtaining the topological classifications of three fixed points. Under the cases of non-hyperbolic, we present all codimension-1 bifurcations of system \eqref{eq1.2} near these fixed points. Specifically, Section 3.1 is devoted to demonstrate the presence of a transcritical bifurcation near the fixed point $O$ (the origin) by using center manifold theorem (\cite{Collet}). In Section 3.2, it is proved that both transcritical and flip bifurcations occur near the fixed point $E_1$. The occurrence of similar bifurcations as $E_1$ near the fixed point $E_2$, is established in Section 3.3. In Section 3.4, we prove that a Neimark-Sacker bifurcation may occur near $E_2$ under certain parametric conditions.
In the fourth and fifth sections, we explore codimension-2 bifurcations of the system, i.e., 1:2, 1:3, 1:4 strong resonances and weak resonance Arnold tongues (\cite{Whitley}), followed from the Neimark-Sacker bifurcation.
Section 6 focuses on the chaotic behaviors of the system in the sense of Marotto. By using the definition of snap-back repellers to establish the parametric conditions in theory, we show the appearance of Marotto chaotic behaviors in system \eqref{eq1.2}.
Finally, we present numerical simulations to system \eqref{eq1.2} in Section 7, i.e., codimension-1 and codimension-2 bifurcations including flip bifurcation diagrams, invariant circles generated by the Neimark-Sacker bifurcation, all the 1:2, 1:3, 1:4 strong resonances and Arnold tongues phenomena, as well as the chaotic behaviors in the sense of Marotto.

\section{Qualitative properties of fixed points}
\allowdisplaybreaks[4]
\setcounter{equation}{0}
System \eqref{eq1.2} is regarded as a three-dimensional mapping $F: {{\mathbb{R}}^3_0}\rightarrow {{\mathbb{R}}^3_0}$, given by
\begin{eqnarray}
F\left(
\begin{array}{l}
x \\
y \\
z
\end{array}
\right)
=
\left(
\begin{array}{c}
\mu x \left(1-x-y-z\right)\\
\beta y \left(x-z\right)\\
\lambda y z
\end{array}
\right),
\label{eq2.1}
\end{eqnarray}
where ${\mathbb{R}}^{3}_0:=\{(x, y ,z)\in {\mathbb{R}}^{3}|x\geq0,y\geq0,z\geq0,x+y+z\leq1\}$ and $\mu, \beta, \lambda\in \mathbb{R}_+^{3}$.

In this section, we discuss the stability of the fixed points of system \eqref{eq2.1} and list the topological classifications of the first three fixed points.
\begin{pro}
For parameter $\Lambda:=(\lambda,\mu,\beta)\in \mathbb{R}_+^{3}$, mapping \eqref{eq2.1} has at most four fixed points, i.e.,  a fixed point $O:(0,0,0)$, which always exists,
two boundary fixed points $E_1:(1-1/\mu,0,0)$ if {$\mu>1$}
and
$E_2:\left({1}/{\beta},1-1/\mu-1/\beta,0\right)$
if {$\beta\geq \mu/(\mu-1)$}, {$\mu>1$},
and a positive one
$$E_3:\left(\frac{1}{2}(1-\frac{1}{\mu}-\frac{1}{\lambda}+\frac{1}{\beta}),\frac{1}{\lambda},
\frac{1}{2}(1-\frac{1}{\mu}-\frac{1}{\lambda}-\frac{1}{\beta})
\right),$$
which exists in the case {$\beta>\lambda\mu/(\lambda\mu - \lambda - \mu)$}, $\lambda>\mu/(\mu-1)$ and $\mu>1$.
The topological classifications for $O$, $E_1$ and $E_2$ are respectively detailed in Tables \ref{Table1}-\ref{Table3}.
\label{pro2.1}
\end{pro}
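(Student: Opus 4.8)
The plan is to solve the fixed point system $F(x,y,z)=(x,y,z)$ by exploiting the factored structure of the equations, to translate membership in ${\mathbb{R}}^3_0$ into the stated parameter inequalities, and finally to linearise at each fixed point in order to establish the classifications recorded in Tables~\ref{Table1}--\ref{Table3}. Writing the defining equations as
\begin{align*}
x &= \mu x(1-x-y-z),\\
y &= \beta y(x-z),\\
z &= \lambda y z,
\end{align*}
I would first separate the case $x=0$, which forces $y=z=0$ (the second equation gives $y(1+\beta z)=0$, hence $y=0$, and then $z=0$) and yields only the origin $O$, from the case $x\neq0$. In the latter case the first equation gives the key relation $x+y+z=1-1/\mu$, and I would then branch on the third equation written as $z(1-\lambda y)=0$. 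The subcase $z=0$ reduces the second equation to $y(1-\beta x)=0$: taking $y=0$ forces $x=1-1/\mu$ (the point $E_1$), while taking $x=1/\beta$ gives $y=1-1/\mu-1/\beta$ (the point $E_2$). The subcase $y=1/\lambda$ with $z\neq0$ forces $x-z=1/\beta$ from the second equation, which together with $x+z=1-1/\mu-1/\lambda$ solves to give $E_3$. This exhausts all possibilities, proving there are at most four fixed points.

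Next I would impose the domain constraints. Since every fixed point with $x\neq0$ satisfies $x+y+z=1-1/\mu\le1$, the constraint $x+y+z\le1$ is automatic, and only nonnegativity of the coordinates must be checked. For $E_1$ the condition $x=1-1/\mu>0$ is exactly $\mu>1$; for $E_2$ the condition $y=1-1/\mu-1/\beta\ge0$ rearranges to $\beta\ge\mu/(\mu-1)$ together with $\mu>1$. For $E_3$ the decisive requirement is $z=\tfrac12(1-1/\mu-1/\lambda-1/\beta)>0$, i.e.\ $1/\beta<(\lambda\mu-\lambda-\mu)/(\lambda\mu)$; this is meaningful precisely when $\lambda\mu-\lambda-\mu>0$, equivalently $\lambda>\mu/(\mu-1)$ with $\mu>1$, and it then rearranges to $\beta>\lambda\mu/(\lambda\mu-\lambda-\mu)$, matching the stated conditions (the remaining coordinates $y=1/\lambda$ and $x=z+1/\beta$ being automatically positive).

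For the topological classifications I would compute the Jacobian
\[
DF(x,y,z)=\left(
\begin{array}{ccc}
\mu(1-2x-y-z) & -\mu x & -\mu x\\
\beta y & \beta(x-z) & -\beta y\\
0 & \lambda z & \lambda y
\end{array}
\right)
\]
and evaluate it at each point. At $O$ the matrix is diagonal with eigenvalues $\mu,0,0$, and at $E_1$ it is upper triangular with eigenvalues $2-\mu$, $\beta(\mu-1)/\mu$ and $0$, so Tables~\ref{Table1} and~\ref{Table2} follow at once by comparing these with the unit circle. At $E_2$, since its third coordinate vanishes the third row reduces to $(0,0,\lambda(1-1/\mu-1/\beta))$, so $DF(E_2)$ is block upper triangular: one eigenvalue is $\lambda(1-1/\mu-1/\beta)$ and the remaining two are the roots of the quadratic characteristic polynomial of the leading $2\times2$ block, with trace $2-\mu/\beta$ and determinant $\mu(1-2/\beta)$.

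The substantive work, and the main obstacle, is then the stability classification at $E_2$: one must determine, over the whole region of existence, whether the two roots of the quadratic are real or form a complex conjugate pair, and in each case locate them (together with the explicit eigenvalue $\lambda(1-1/\mu-1/\beta)$) inside, on, or outside the unit circle, including all the boundary non-hyperbolic cases that seed the bifurcation analysis of the later sections. This is precisely where the complete discriminant system of \cite{YANG1,YANG2} enters: the sign pattern of the discriminant sequence of the characteristic polynomial partitions parameter space according to the number and multiplicity of the real roots, and combining this with the Schur--Cohn (Jury) conditions for the unit circle yields the full stratification. Carrying out and organising this case enumeration carefully is the heart of the proof, and its output is exactly Table~\ref{Table3}.
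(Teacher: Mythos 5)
Your proposal is correct and follows essentially the same route as the paper: solve the factored fixed-point system, impose nonnegativity to obtain the stated existence conditions, then exploit the (block-)triangular structure of the Jacobians so that $O$ and $E_1$ are read off directly from their eigenvalues, while $JF(E_2)$ splits into the explicit eigenvalue $\lambda(1-1/\mu-1/\beta)$ plus a quadratic factor whose roots are located relative to the unit circle via the complete discriminant system combined with Schur--Cohn/Jury-type conditions. The only differences are ones of detail: your case-splitting derivation of the four fixed points is in fact more explicit than the paper's, and, exactly like the paper, you delegate the case enumeration underlying Table~\ref{Table3} (the semi-algebraic systems whose solutions produce the thresholds $\gamma_1$, $\gamma_2$ and the regions $\mathfrak{D}_{ij}$, $\mathfrak{L}_{ij}$) to that machinery rather than writing it out.
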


\begin{table*}[htbp]
 \centering
 \caption{Topological types of the fixed point $O$}
\label{Table1}
{\fontsize{12pt}{\baselineskip}\selectfont\begin{tabular}{lllll}
\hline\noalign{\smallskip}
\multicolumn{1}{c}{\multirow{2}*{Parameters ($\Lambda\in \mathbb{R}_+^{3}$)}}  & \multicolumn{1}{c}{Properties} & \multirow{2}*{Cases}\\ \noalign{\smallskip}\cline{2-2}\noalign{\smallskip}
\multicolumn{1}{c}{}& ~~~~~~~~$O$ &  \\ \noalign{\smallskip}\hline\noalign{\smallskip}
    $0<\mu<1$               & stable node   & $\mathbf{D}_{1}$\\
    $\mu=1$                 & non-hyperbolic& $\mathbf{L}_{1}$\\
    $\mu>1$                 & saddle point  &$\mathbf{D}_{2}$\\
 \noalign{\smallskip}\hline
 \end{tabular}}
\end{table*}

\begin{table*}[htbp]
 \centering
 \caption{Topological types of the fixed point $E_1$}
\label{Table2}
{\fontsize{12pt}{\baselineskip}\selectfont\begin{tabular}{lllll}
\hline\noalign{\smallskip}
\multicolumn{2}{c}{\multirow{2}*{Parameters ($\Lambda\in \mathbb{R}_+^{3}$)}}  & \multicolumn{1}{c}{Properties} & \multirow{2}*{Cases}\\ \noalign{\smallskip}\cline{3-3}\noalign{\smallskip}
\multicolumn{2}{c}{}& ~~~~~~~~$E_1$ &  \\ \noalign{\smallskip}\hline\noalign{\smallskip}
$1<\mu<3$  & $0<\beta<\frac{\mu}{\mu-1}$               & stable node   & $\mathcal {D}_{1}$\\
                 & $\beta=\frac{\mu}{\mu-1}$                 & non-hyperbolic& $\mathcal {L}_{1}$\\
                 & $\beta>\frac{\mu}{\mu-1}$                 & saddle point  &$\mathcal {D}_{31}$\\
$\mu=3$       & $\beta>0$ &non-hyperbolic &$\mathcal {L}_{2}$\\
$\mu>3$     & $0<\beta<\frac{\mu}{\mu-1}$ & saddle point&$\mathcal {D}_{32}$\\
                 & $\beta=\frac{\mu}{\mu-1}$ & non-hyperbolic & $\mathcal {L}_{3}$\\
                 & $\beta>\frac{\mu}{\mu-1}$ & unstable node   & $\mathcal {D}_{2}$\\
 \noalign{\smallskip}\hline
 \end{tabular}}
\end{table*}

\begin{center}
{\rm Table 3: Topological types of the fixed point $E_2$}
\end{center}
{\rm\footnotesize
\begin{longtable}{p{20mm}p{30mm}p{35mm}p{35mm}p{10mm}}
\hline\noalign{\smallskip}
\multicolumn{3}{c}{\multirow{2}*{Conditions ($\Lambda\in \mathbb{R}_+^{3}$)}}  & \multicolumn{1}{c}{Properties} & \multirow{2}*{Cases}\\
\noalign{\smallskip}\cline{4-4}\noalign{\smallskip}
\multicolumn{3}{c}{}& ~~~~~~~~$E_2$ &  \\
\noalign{\smallskip}\hline\noalign{\smallskip}
$1<\beta\leq\frac{3}{2}$ & $\mu>\frac{\beta}{\beta-1}$ & $0<\lambda<\frac{\mu \beta}{\mu \beta-\mu-\beta}$ & saddle point & $\mathfrak{D}_{31}$\\
             & &$\lambda=\frac{\mu  \beta}{\beta  \mu -\beta -\mu}$& non-hyperbolic& $\mathfrak{L}_{11}$\\
             & &$\lambda>\frac{\mu  \beta}{\beta  \mu -\beta -\mu}$& saddle point  &$\mathfrak{D}_{32}$\\
$\frac{3}{2}<\beta\leq2$ & $\frac{\beta}{\beta -1}<\mu <-\frac{3 \beta}{\beta-3}$ & $0<\lambda<\frac{\mu \beta}{\mu \beta-\mu-\beta}$ & stable node & $\mathfrak{D}_{11}$\\
             & &$\lambda=\frac{\mu  \beta}{\beta  \mu -\beta -\mu}$& non-hyperbolic& $\mathfrak{L}_{21}$\\
             & &$\lambda>\frac{\mu  \beta}{\beta  \mu -\beta -\mu}$& saddle point  &$\mathfrak{D}_{33}$\\                         &$\mu=-\frac{3 \beta}{\beta -3}$& $\lambda>0$ &non-hyperbolic &$\mathfrak{L}_{22}$\\
             &$\mu>-\frac{3 \beta}{\beta -3}$& $0<\lambda<\frac{\mu  \beta}{\mu  \beta -\mu -\beta}$ &saddle point &$\mathfrak{D}_{34}$\\
             & &$\lambda=\frac{\mu  \beta}{\beta  \mu -\beta -\mu}$& non-hyperbolic& $\mathfrak{L}_{13}$\\
             & &$\lambda>\frac{\mu  \beta}{\beta  \mu -\beta -\mu}$& saddle point  &$\mathfrak{D}_{35}$\\
$2<\beta<\frac{9}{4}$ & $\frac{\beta}{\beta -1}<\mu \le \gamma_1$ & $0<\lambda<\frac{\mu \beta}{\mu \beta-\mu-\beta}$ & stable node & $\mathfrak{D}_{12}$\\
             & &$\lambda=\frac{\mu  \beta}{\beta  \mu -\beta -\mu}$& non-hyperbolic& $\mathfrak{L}_{14}$\\
             & &$\lambda>\frac{\mu  \beta}{\beta  \mu -\beta -\mu}$& saddle point  &$\mathfrak{D}_{36}$\\
             &$\gamma_1<\mu <\gamma_2$& $0<\lambda<\frac{\mu  \beta}{\mu  \beta -\mu -\beta}$ &stable focus-node&$\mathfrak{D}_{51}$\\
             & &$\lambda=\frac{\mu  \beta}{\beta  \mu -\beta -\mu}$& non-hyperbolic& $\mathfrak{L}_{15}$\\
             & &$\lambda>\frac{\mu  \beta}{\beta  \mu -\beta -\mu}$& saddle-focus  &$\mathfrak{D}_{61}$\\
             &$\gamma_2\leq\mu <-\frac{3 \beta}{\beta -3}$& $0<\lambda<\frac{\mu  \beta}{\mu  \beta -\mu -\beta}$ &stable node&$\mathfrak{D}_{13}$\\
             & &$\lambda=\frac{\mu  \beta}{\beta  \mu -\beta -\mu}$& non-hyperbolic& $\mathfrak{L}_{16}$\\
             & &$\lambda>\frac{\mu  \beta}{\beta  \mu -\beta -\mu}$& saddle point&$\mathfrak{D}_{37}$\\
             &$\mu=-\frac{3 \beta}{\beta -3}$& $\lambda>0$ &non-hyperbolic &$\mathfrak{L}_{23}$\\
             &$\mu >-\frac{3 \beta}{\beta -3}$& $0<\lambda<\frac{\mu  \beta}{\mu  \beta -\mu -\beta}$ &saddle point&$\mathfrak{D}_{38}$\\
             & &$\lambda=\frac{\mu  \beta}{\beta  \mu -\beta -\mu}$& non-hyperbolic& $\mathfrak{L}_{17}$\\
             & &$\lambda>\frac{\mu  \beta}{\beta  \mu -\beta -\mu}$& saddle point&$\mathfrak{D}_{39}$\\
$\beta=\frac{9}{4}$ & $\frac{\beta}{\beta -1}<\mu \le \gamma_1$ & $0<\lambda<\frac{\mu \beta}{\mu \beta-\mu-\beta}$ & stable node & $\mathfrak{D}_{14}$\\
             & &$\lambda=\frac{\mu  \beta}{\beta  \mu -\beta -\mu}$& non-hyperbolic& $\mathfrak{L}_{18}$\\
             & &$\lambda>\frac{\mu  \beta}{\beta  \mu -\beta -\mu}$& saddle point  &$\mathfrak{D}_{41}$\\
             &$\gamma_1<\mu <9$& $0<\lambda<\frac{\mu  \beta}{\mu  \beta -\mu -\beta}$ &stable focus-node&$\mathfrak{D}_{52}$\\
             & &$\lambda=\frac{\mu  \beta}{\beta  \mu -\beta -\mu}$& non-hyperbolic& $\mathfrak{L}_{19}$\\
             & &$\lambda>\frac{\mu  \beta}{\beta  \mu -\beta -\mu}$& saddle-focus  &$\mathfrak{D}_{62}$\\
             &$\mu=9$& $\lambda>0$ &non-hyperbolic &$\mathfrak{L}_{24}$\\
             &$\mu >9$& $0<\lambda<\frac{\mu  \beta}{\mu  \beta -\mu -\beta}$ &saddle point&$\mathfrak{D}_{42}$\\
             & &$\lambda=\frac{\mu  \beta}{\beta  \mu -\beta -\mu}$& non-hyperbolic& $\mathfrak{L}_{1-10}$\\
             & &$\lambda>\frac{\mu  \beta}{\beta  \mu -\beta -\mu}$& saddle point&$\mathfrak{D}_{43}$\\
$\frac{9}{4}<\beta<3$ & $\frac{\beta}{\beta -1}<\mu \le \gamma_1$ & $0<\lambda<\frac{\mu \beta}{\mu \beta-\mu-\beta}$ & stable node & $\mathfrak{D}_{15}$\\
             & &$\lambda=\frac{\mu  \beta}{\beta  \mu -\beta -\mu}$& non-hyperbolic& $\mathfrak{L}_{1-11}$\\
             & &$\lambda>\frac{\mu  \beta}{\beta  \mu -\beta -\mu}$& saddle point  &$\mathfrak{D}_{44}$\\
             &$\gamma_1<\mu <\frac{\beta}{\beta -2}$& $0<\lambda<\frac{\mu  \beta}{\mu  \beta -\mu -\beta}$ &stable focus-node&$\mathfrak{D}_{53}$\\
             & &$\lambda=\frac{\mu  \beta}{\beta  \mu -\beta -\mu}$& non-hyperbolic& $\mathfrak{L}_{1-12}$\\
             & &$\lambda>\frac{\mu  \beta}{\beta  \mu -\beta -\mu}$& saddle-focus  &$\mathfrak{D}_{63}$\\
             &$\mu=\frac{\beta}{\beta -2}$& $\lambda>0$ &non-hyperbolic& $\mathfrak{L}_{31}$\\
             &$\frac{\beta}{\beta -2}<\mu <\gamma_2$& $0<\lambda<\frac{\mu  \beta}{\mu  \beta -\mu -\beta}$ &saddle-focus&$\mathfrak{D}_{64}$\\
             & &$\lambda=\frac{\mu  \beta}{\beta  \mu -\beta -\mu}$& non-hyperbolic& $\mathfrak{L}_{1-13}$\\
             & &$\lambda>\frac{\mu  \beta}{\beta  \mu -\beta -\mu}$& unstable focus-node&$\mathfrak{D}_{71}$\\
             &$\gamma_2\leq\mu <-\frac{3 \beta}{\beta -3}$& $0<\lambda<\frac{\mu  \beta}{\mu  \beta -\mu -\beta}$ &saddle point&$\mathfrak{D}_{45}$\\
             & &$\lambda=\frac{\mu  \beta}{\beta  \mu -\beta -\mu}$& non-hyperbolic& $\mathfrak{L}_{1-14}$\\
             & &$\lambda>\frac{\mu  \beta}{\beta  \mu -\beta -\mu}$& unstable node&$\mathfrak{D}_{21}$\\
             &$\mu=-\frac{3 \beta}{\beta -3}$& $\lambda>0$ &non-hyperbolic &$\mathfrak{L}_{25}$\\
             &$\mu >-\frac{3 \beta}{\beta -3}$& $0<\lambda<\frac{\mu  \beta}{\mu  \beta -\mu -\beta}$ &saddle point&$\mathfrak{D}_{46}$\\
             & &$\lambda=\frac{\mu  \beta}{\beta  \mu -\beta -\mu}$& non-hyperbolic& $\mathfrak{L}_{1-15}$\\
             & &$\lambda>\frac{\mu  \beta}{\beta  \mu -\beta -\mu}$& saddle point&$\mathfrak{D}_{47}$\\
$\beta\geq3$ & $\frac{\beta}{\beta -1}<\mu \le \gamma_1$ & $0<\lambda<\frac{\mu \beta}{\mu \beta-\mu-\beta}$ & stable node & $\mathfrak{D}_{16}$\\
             & &$\lambda=\frac{\mu  \beta}{\beta  \mu -\beta -\mu}$& non-hyperbolic& $\mathfrak{L}_{1-16}$\\
             & &$\lambda>\frac{\mu  \beta}{\beta  \mu -\beta -\mu}$& saddle point  &$\mathfrak{D}_{48}$\\
             &$\gamma_1<\mu <\frac{\beta}{\beta-2}$& $0<\lambda<\frac{\mu  \beta}{\mu  \beta -\mu -\beta}$ &stable focus-node&$\mathfrak{D}_{54}$\\
             & &$\lambda=\frac{\mu  \beta}{\beta  \mu -\beta -\mu}$& non-hyperbolic& $\mathfrak{L}_{1-17}$\\
             & &$\lambda>\frac{\mu  \beta}{\beta  \mu -\beta -\mu}$& saddle-focus  &$\mathfrak{D}_{65}$\\
             &$\mu=\frac{\beta}{\beta-2}$& $\lambda>0$ &non-hyperbolic &$\mathfrak{L}_{32}$\\
             &$\frac{\beta}{\beta-2}<\mu <\gamma_2$& $0<\lambda<\frac{\mu  \beta}{\mu  \beta -\mu -\beta}$ &saddle-focus&$\mathfrak{D}_{66}$\\
             & &$\lambda=\frac{\mu  \beta}{\beta  \mu -\beta -\mu}$& non-hyperbolic& $\mathfrak{L}_{1-18}$\\
             & &$\lambda>\frac{\mu  \beta}{\beta  \mu -\beta -\mu}$& unstable focus-node&$\mathfrak{D}_{72}$\\
             &$\mu \geq\gamma_2$& $0<\lambda<\frac{\mu  \beta}{\mu  \beta -\mu -\beta}$ &saddle point&$\mathfrak{D}_{49}$\\
             & &$\lambda=\frac{\mu  \beta}{\beta  \mu -\beta -\mu}$& non-hyperbolic& $\mathfrak{L}_{1-19}$\\
             & &$\lambda>\frac{\mu  \beta}{\beta  \mu -\beta -\mu}$& unstable node&$\mathfrak{D}_{22}$\\
 \noalign{\smallskip}\hline
\label{Table3}
\end{longtable}}

\begin{proof}
By solving the following equations
\begin{eqnarray}
\left\{\begin{array}{ll}
x=\mu x \left(1-x-y-z\right),  \\
y=\beta y \left(x-z\right),  \\
z=\lambda y z,
\end{array}\right.
\label{eq2.2}
\end{eqnarray}
we get the fixed points of system \eqref{eq2.1}. More concretely,
a solution $(x,y,z)=(0,0,0)$ of \eqref{eq2.2} always exists since $\lambda>0,\beta>0,\mu>0$;
equations \eqref{eq2.2} has a solution $(x,y,z)=((\mu-1)/\mu,0,0)$ if $\mu>1$;
if $\mu\geq \beta/(\beta-1)$ and $\beta>1$, \eqref{eq2.2} has a solution
$$(x,y,z)=\left(\frac{1}{\beta},1-\frac{1}{\mu}-\frac{1}{\beta},0\right).$$
Besides, the point
$$(x,y,z)=\left(\frac{1}{2}(1-\frac{1}{\mu}-\frac{1}{\lambda}+\frac{1}{\beta}),\frac{1}{\lambda},
\frac{1}{2}(1-\frac{1}{\mu}-\frac{1}{\lambda}-\frac{1}{\beta})\right)$$
is also a solution of \eqref{eq2.2} if $\mu>1$ , $\lambda>\mu/(\mu-1)$ and $\beta>\lambda\mu/(\lambda\mu - \lambda - \mu)$.
Thus, system \eqref{eq2.1} has four fixed points $O, E_1, E_2$ and $E_3$, i.e.,
\begin{eqnarray*}
&&O:(0, 0, 0) , E_1:\left(1-\frac{1}{\mu},0,0\right) , E_2:\left(\frac{1}{\beta},1-\frac{1}{\mu}-\frac{1}{\beta},0\right)\\
&&E_3:\left(\frac{1}{2}(1-\frac{1}{\mu}-\frac{1}{\lambda}+\frac{1}{\beta}),\frac{1}{\lambda},
\frac{1}{2}(1-\frac{1}{\mu}-\frac{1}{\lambda}-\frac{1}{\beta})\right).
\end{eqnarray*}

In what follows, we discuss their topological classifications. The Jacobian matrices at $O$, $E_1$ and $E_2$ are given by
$$
JF(O)=\left(
        \begin{array}{ccc}
          \mu & 0 & 0 \\
          0 & 0 & 0 \\
          0 & 0 & 0
        \end{array}
      \right),
JF(E_1)\!=\!\left(\!\!
        \begin{array}{ccc}
          -\mu +2 & -\mu +1 & -\mu +1 \\
          0 & {\beta  \left(\mu -1\right)}/{\mu} & 0 \\
          0 & 0 & 0
        \end{array}
     \!\! \right),
$$
$$
JF(E_2)\!=\!\left(\!\!
        \begin{array}{ccc}
          {(\beta -\mu)}/{\beta} & -{\mu}/{\beta} & -{\mu}/{\beta} \\
          {(\beta  \mu -\beta -\mu)}/{\mu} & 1 & -{(\beta  \mu -\beta -\mu)}/{\mu} \\
          0 & 0 & {\lambda  \left(\beta  \mu -\beta -\mu \right)}/{\beta  \mu}
        \end{array}
     \!\! \right),
$$
respectively.
Clearly, the eigenvalues of $JF(O)$ are $0, 0$ and $\mu$.
Hence, the fixed point $O$ is non-hyperbolic if and only if $\mu=1$.
Besides, the fixed point $O$ is hyperbolic and its topological types are listed as follows.
\begin{description}
\item[(i)] If $0<\mu<1$, the fixed point $O$ is a stable node (referred to case $\mathbf{D}_{1}$ in Table \ref{Table1}).
\item[(ii)] If $\mu>1$, the fixed point $O$ is a saddle point (referred to case $\mathbf{D}_{2}$ in Table \ref{Table1}).
\end{description}

Next, we discuss the qualitative properties of $E_1$. One can check that the matrix $JF(E_1)$ has eigenvalues $0, -\mu + 2$ and $\beta(\mu-1)/\mu$, which are all real. Then we have
\begin{description}
\item[(i)] If $E_1$ is a stable node, then $|-\mu + 2|<1$, $|\beta(\mu-1)/\mu|<1$, and we obtain the corresponding semi-algebraic system
    \begin{eqnarray*}
    &&\!\!\!\!\!\!\!\!\!\!\!\!\!\!PS_1:=\left\{\mu>1,\beta>0,-\mu+2<1,-\mu+2>-1,{\beta  \left(\mu -1\right)}/{\mu}<1,{\beta  \left(\mu -1\right)}/{\mu}>-1\right\}.
    \end{eqnarray*}
    Solving $PS_1$, we get the following parameter conditions
    \begin{description}
    \item[$\bullet$]~$1<\mu<3,0<\beta<{\mu}/{(\mu -1)}$ (referred to case $\mathcal {D}_{1}$ in Table \ref{Table2}).
    \end{description}
\item[(ii)] If $E_1$ is an unstable node, then $|-\mu + 2|>1$ and $|\beta(\mu-1)/\mu|>1$, i.e.,
    \begin{description}
    \item[$\bullet$]~$\mu>3 , \beta>{\mu}/{(\mu -1)}$ (referred to case $\mathcal {D}_{2}$ in Table \ref{Table2}).
    \end{description}

\item[(iii)] If $E_1$ is a saddle point, then either $|-\mu + 2|>1$, $|\beta(\mu-1)/\mu|<1$, or $|-\mu + 2|<1$, $|\beta(\mu-1)/\mu|>1$, i.e.,
    \begin{description}
    \item[$\bullet$]~$1<\mu<3,\beta>{\mu}/{(\mu -1)}$
    \item[$\bullet$]~$\mu>3,0<\beta<{\mu}/{(\mu -1)}$
    \end{description}
    (referred to cases $\mathcal {D}_{31}$ and $\mathcal {D}_{32}$ in Table \ref{Table2}, respectively).
\item[(iv)] If $E_1$ is non-hyperbolic, then $|-\mu + 2|=1$ or $|\beta(\mu-1)/\mu|=1$. Solving the corresponding semi-algebraic systems, we get
    \begin{description}
    \item[$\bullet$]~$1<\mu<3 ,\beta={\mu}/{(\mu -1)},$
    \item[$\bullet$]~$\mu=3, \beta>0,$
    \item[$\bullet$]~$\mu>3, \beta={\mu}/{(\mu -1)}$
    \end{description}
    (referred to cases $\mathcal {L}_{1}, \mathcal {L}_{2}$ and $\mathcal {L}_{3}$ in Table \ref{Table2}, respectively).
\end{description}

Finally, we consider the topological types of the fixed point $E_2$. The characteristic polynomial of $JF(E_2)$ is given by
\begin{eqnarray*}
\mathcal{P}_{E_2}(t)&:=&\left(t-\frac{\lambda  \left(\beta  \mu -\beta -\mu \right)}{\beta \mu}\Big)\Big(
-t^{2}+\frac{2 \beta -\mu}{\beta} t-\frac{\mu  \left(\beta -2\right)}{\beta}\right).
\end{eqnarray*}
Hence, one real eigenvalue of $JF(E_2)$ is $\lambda(\beta\mu-\beta-\mu)/\beta\mu$, and the other two eigenvalues can be determined from the zeros of the polynomial
\begin{eqnarray*}
&&{P}_{E_2}(t):=\frac{\mathcal{P}_{E_2}(t)}{t-\frac{\lambda \left(\beta  \mu -\beta -\mu \right)}{\beta \mu}}=-t^{2}+\frac{2 \beta -\mu}{\beta} t-\frac{\mu  \left(\beta -2\right)}{\beta}.
\end{eqnarray*}
Solving ${P}_{E_2}(t)=0$, we obtain
\begin{eqnarray}
t_{1,2}=\frac{2 \beta -\mu \pm\sqrt{-4 \beta^{2} \mu +4 \beta^{2}+4 \beta  \mu +\mu^{2}}}{2 \beta}.
\label{solu-1}
\end{eqnarray}
This follows that
\begin{description}
\item[(i)] If $E_2$ is a stable node, then $|\lambda(\beta\mu - \beta - \mu)/\beta\mu|<1$ and $|t_{1,2}|<1$, i.e., $|\lambda(\beta\mu - \beta - \mu)/\beta\mu|<1$, ${P_{E_2}}(1)<0$, ${P_{E_2}}(-1)<0$, $\Delta\geq0$ and $-1<t_*<1$, where $t_*:=(2\beta - \mu)/2\beta$, denotes the axis symmetry of $P_{E_2}(t)$, $\Delta$ stands for the discriminant of $P_{E_2}(t)$. Thus, the corresponding semi-algebraic system is
    \begin{eqnarray*}
    &&\!\!\!\!\!\!\!\!\!\!PS_2:=\left\{\mu>0,\beta>0,\lambda>0,{(\beta\mu-\beta-\mu)}/{\beta\mu}\ge 0,{(-\beta  \mu +\beta +\mu)}/{\beta}<0\right., \\
    &&~~~~~~~{(3 \mu-\mu\beta -3\beta)}/{\beta}<0,{(-4 \mu \beta^{2}+4\beta^{2}+4 \beta  \mu +\mu^{2})}/{\beta^{2}}\ge 0,{(2 \beta -\mu)}/{2 \beta}<1,\\
    &&~~~~~~~\left.{(2 \beta -\mu)}/{2 \beta}>-1,{(\lambda \left(\beta  \mu -\beta -\mu \right))}/{\beta \mu}<1,{(\lambda\left(\beta  \mu -\beta -\mu \right))}/{\beta \mu}>-1\right\}.
    \end{eqnarray*}
   Employing the polynomial complete discrimination system theory to solve $PS_2$, we obtain the following parameter conditions :
    \begin{description}
    \item[$\bullet$]~${3}/{2}<\beta\leq2,{\beta}/{(\beta -1)}<\mu<-{3 \beta}/{(\beta -3)},0<\lambda<{\mu  \beta}/{(\mu  \beta -\mu -\beta)}$,
    \item[$\bullet$]~$2<\beta\leq{9}/{4},{\beta}/{(\beta -1)}<\mu\leq\gamma_1,0<\lambda<{\mu  \beta}/{(\mu  \beta -\mu -\beta)}$,
    \item[$\bullet$]~$2<\beta<{9}/{4},\gamma_2\leq\mu<-{3 \beta}/{(\beta -3)},0<\lambda<{\mu  \beta}/{(\mu  \beta -\mu -\beta)}$,
    \item[$\bullet$]~${9}/{4}<\beta<3,{\beta}/{(\beta -1)}<\mu\leq\gamma_1,0<\lambda<{\mu  \beta}/{(\mu  \beta -\mu -\beta)}$,
    \item[$\bullet$]~$\beta\geq3 ,{\beta}/{\beta -1}<\mu\leq\gamma_1,0<\lambda<{\mu  \beta}/{(\mu  \beta -\mu -\beta)}$,
    \end{description}
    where
    \begin{eqnarray*}
    \gamma_1:={2 \beta^{2}-2 \beta -2 \sqrt{\beta^{4}-2 \beta^{3}}},\\
    \gamma_2:={2 \beta^{2}-2 \beta +2 \sqrt{\beta^{4}-2 \beta^{3}}}
    \end{eqnarray*}
    (referred to cases $\mathfrak{D}_{1i}$ in Table \ref{Table3}, where $i= 1,2,3,4,5,6$).
\item[(ii)] If $E_2$ is an unstable node, then $|\lambda(\beta\mu - \beta - \mu)/\beta\mu|>1$ and $|t_{1,2}|>1$. Moreover, $|t_{1,2}|>1$ implies that one of the following conditions holds:
    \begin{description}
    \item[$\lozenge$]~${P_{E_2}}(1)>0$, ${P_{E_2}}(-1)>0$, $\Delta>0$,
    \item[$\lozenge$]~${P_{E_2}}(1)<0$, ${P_{E_2}}(-1)<0$, $\Delta\geq0 $, $t_*<-1$,
    \item[$\lozenge$]~${P_{E_2}}(1)<0$, ${P_{E_2}}(-1)<0$, $\Delta\geq0 $, $t_*>1$.
    \end{description}
    Using the same idea as done in case (i), we get
    \begin{description}
    \item[$\bullet$]~$9/4<\beta<3,\gamma_2\leq\mu<-3 \beta/(\beta -3),\lambda>\mu  \beta/(\mu  \beta -\mu -\beta)$,\\
    \item[$\bullet$]~$\beta\geq3,\mu\geq\gamma_2,\lambda>\mu  \beta/(\mu  \beta -\mu -\beta)$
    \end{description}
    (referred to cases $\mathfrak{D}_{2i}$ in Table \ref{Table3}, where $i= 1,2$).
\item[(iii)] If $E_2$ is a saddle point, then $\lambda(\beta\mu - \beta - \mu)/\beta\mu$, $t_1$ and $t_2$ are all real. This implies that one of the following conditions is satisfied:
    \begin{description}
    \item[$\lozenge$]~$|\lambda(\beta\mu - \beta - \mu)/\beta\mu|>1$, ${P_{E_2}}(1)<0$, ${P_{E_2}}(-1)<0$, $\Delta\geq0$, $-1<t_*<1$,
    \item[$\lozenge$]~$|\lambda(\beta\mu - \beta - \mu)/\beta\mu|<1$, ${P_{E_2}}(1)>0$, ${P_{E_2}}(-1)<0$, $\Delta\geq0 $,
    \item[$\lozenge$]~$|\lambda(\beta\mu - \beta - \mu)/\beta\mu|<1$, ${P_{E_2}}(1)<0$, ${P_{E_2}}(-1)>0$, $\Delta\geq0 $,
    \item[$\lozenge$]~$|\lambda(\beta\mu - \beta - \mu)/\beta\mu|>1$, ${P_{E_2}}(1)>0$, ${P_{E_2}}(-1)<0$, $\Delta\geq0 $,
    \item[$\lozenge$]~$|\lambda(\beta\mu - \beta - \mu)/\beta\mu|>1$, ${P_{E_2}}(1)<0$, ${P_{E_2}}(-1)>0$, $\Delta\geq0 $,
    \item[$\lozenge$]~$|\lambda(\beta\mu - \beta - \mu)/\beta\mu|<1$, ${P_{E_2}}(1)>0$, ${P_{E_2}}(-1)>0$, $\Delta>0$,
    \item[$\lozenge$]~$|\lambda(\beta\mu - \beta - \mu)/\beta\mu|<1$, ${P_{E_2}}(1)<0$, ${P_{E_2}}(-1)<0$, $\Delta\geq0 $, $t_*<-1$,
    \item[$\lozenge$]~$|\lambda(\beta\mu - \beta - \mu)/\beta\mu|<1$, ${P_{E_2}}(1)<0$, ${P_{E_2}}(-1)<0$, $\Delta\geq0 $, $t_*>1$.
    \end{description}
    Hence, similar to the discussion in the above two cases, we obtain
    \begin{description}
    \item[$\bullet$]~$1<\beta\le {3}/{2},\mu>{\beta}/{(\beta -1)},0<\lambda<{\mu  \beta}/{(\mu  \beta -\mu -\beta)}$,
    \item[$\bullet$]~$1<\beta\le {3}/{2},\mu >{\beta}/{\beta-1},\lambda>{\mu  \beta}/{(\mu  \beta -\mu -\beta)}$,
    \item[$\bullet$]~${3}/{2}<\beta \le 2,{\beta}/{(\beta -1)}<\mu <-{3 \beta}/{(\beta -3)},\lambda>{\mu  \beta}/{(\mu \beta-\mu-\beta)}$,
    \item[$\bullet$]~${3}/{2}<\beta<3,\mu>-{3 \beta}/{(\beta -3)},0<\lambda<{\mu  \beta}/{(\mu  \beta -\mu -\beta)}$,
    \item[$\bullet$]~${3}/{2}<\beta<3,\mu >-{3 \beta}/{(\beta -3)},\lambda>{\mu  \beta}/{(\mu  \beta -\mu -\beta)}$,
    \item[$\bullet$]~$\beta>2,{\beta}/{(\beta -1)}<\mu \le \gamma_1\,,\lambda>{\mu  \beta}/{(\mu  \beta -\mu -\beta)}$,
    \item[$\bullet$]~$2<\beta <{9}/{4},\gamma_2\,\le \mu <-{3 \beta}/{(\beta -3)},\lambda>{\mu  \beta}/{(\mu  \beta -\mu -\beta)}$,
    \item[$\bullet$]~${9}/{4}<\beta <3,\gamma_2\,\le \mu <-{3 \beta}/{\beta-3},0<\lambda <{\mu  \beta}/{(\mu \beta-\mu-\beta)}$,
    \item[$\bullet$]~$\beta\geq3,\mu\ge \gamma_2\,,0<\lambda<{\mu  \beta}/{(\mu  \beta -\mu -\beta)}$
    \end{description}
    (referred to cases $\mathfrak{D}_{3i}$ and $\mathfrak{D}_{4i}$ in Table \ref{Table3}, where $i= 1,2,...,9$).
\item[(iv)] If $E_2$ is a stable focus-node, then $|\lambda(\beta\mu - \beta - \mu)/\beta\mu|<1$, $\Delta<0$ and $|t_1\,t_2|<1$. So we get
    \begin{description}
    \item[$\bullet$]~$2<\beta\leq{9}/{4},\gamma_1\,<\mu <\gamma_2,0<\lambda <{\mu  \beta}/{(\mu \beta-\mu-\beta)}$,
    \item[$\bullet$]~$\beta>{9}/{4},\gamma_1\,<\mu <{\beta}/{(\beta -2)},0<\lambda <{\mu  \beta}/{(\mu  \beta -\mu -\beta)}$
    \end{description}
    (referred to cases $\mathfrak{D}_{5i}$ in Table \ref{Table3}, where $i= 1,2,3,4$).
\item[(v)] If $E_2$ is a saddle-focus, then one of the following conditions holds:
    \begin{description}
    \item[$\lozenge$]~$|\lambda(\beta\mu - \beta - \mu)/\beta\mu|>1$, $\Delta<0$, $|t_1\,t_2|<1$,
    \item[$\lozenge$]~$|\lambda(\beta\mu - \beta - \mu)/\beta\mu|<1$, $\Delta<0$, $|t_1\,t_2|>1$.
    \end{description}
    Solving the corresponding semi-algebraic systems, we get
   \begin{description}
    \item[$\bullet$]~$2<\beta\leq{9}/{4},\gamma_1<\mu <\gamma_2,\lambda>{\mu  \beta}/{(\mu  \beta -\mu -\beta)}$,
    \item[$\bullet$]~$\beta>{9}/{4},\gamma_1<\mu <{\beta}/{(\beta -2)},\lambda>{\mu  \beta}/{(\mu  \beta -\mu -\beta)}$,
    \item[$\bullet$]~$\beta>{9}/{4},{\beta}/{(\beta -2)}<\mu <\gamma_2,0<\lambda <{\mu  \beta}/{(\mu  \beta -\mu -\beta)}$
    \end{description}
    (referred to cases $\mathfrak{D}_{6i}$ in Table \ref{Table3}, where $i= 1,2,3,4,5,6$).
\item[(vi)] If $E_2$ is an unstable focus-node, then $|\lambda(\beta\mu - \beta - \mu)/\beta\mu|>1$, $\Delta<0$ and $|t_1\,t_2|>1$. Similar to the discussion in the above cases, we have
    \begin{description}
    \item[$\bullet$]~$\beta >{9}/{4},{\beta}/{(\beta -2)}<\mu <\gamma_2\,,\lambda>{\mu  \beta}/{(\mu  \beta -\mu -\beta)}$
    \end{description}
    (referred to cases $\mathfrak{D}_{7i}$ in Table \ref{Table3}, where $i= 1,2$).
\item[(vii)] If $E_2$ is non-hyperbolic, then one of the following conditions holds:
    \begin{description}
    \item[$\lozenge$]~$|\lambda(\beta\mu - \beta - \mu)/\beta\mu|=1$,
    \item[$\lozenge$]~$0<\Delta\geq0$, ${P_{E_2}}(1)=0$,
    \item[$\lozenge$]~$0<\Delta\geq0$, ${P_{E_2}}(-1)=0$,
    \item[$\lozenge$]~$\Delta<0$, $|t_1\,t_2|=1$.
    \end{description}
We further obtain
    \begin{description}
    \item[$\bullet$]~$\beta>1,\mu>{\beta}/{(\beta-1)},\lambda={\mu  \beta}/{(\beta  \mu -\beta -\mu)}$,
    \item[$\bullet$]~${3}/{2}<\beta<3,\mu=-{3 \beta}/{(\beta -3)},\lambda>0$,
    \item[$\bullet$]~$\beta>{9}/{4},\mu={\beta}/{(\beta -2)},\lambda>0$
    \end{description}
    (referred to cases $\mathfrak{L}_{1i}$, $\mathfrak{L}_{2j}$ and $\mathfrak{L}_{3k}$ in Table \ref{Table3}, where $i= 1,2,...,19$, $j= 1,2,3,4,5$, $k= 1,2$).
\end{description}
The whole proof is completed.
\end{proof}

\begin{rmk}
Since the characteristic polynomial of $E_3$ is a cubic polynomial, personal computers lack the computational resources required for bifurcation analysis, thus we can only consider the stability characteristics of $E_3$. The Jacobian matrix evaluated at $E_3$ is expressed as
$$
JF(E_3)\!=\!\left(\!\!
        \begin{array}{ccc}
          a_1-1 & a_1 & a_1 \\
          {\beta}/{\lambda} & 1 & -{\beta}/{\lambda} \\
          0 & a_2 & 1
        \end{array}
     \!\! \right),
$$
where
\begin{eqnarray*}
a_1=-\frac{\mu}{2}(1-\frac{1}{\lambda}-\frac{1}{\mu}+\frac{1}{\beta}),
a_2=\frac{\lambda}{2}(1-\frac{1}{\lambda}-\frac{1}{\mu}-\frac{1}{\beta}).
\end{eqnarray*}
Its characteristic equation taking the form
\begin{eqnarray*}
\mathcal{P}_{E_3}(t)&:=-t^3+D_1 t^2+D_2 t+D_3,
\end{eqnarray*}
where the coefficients are defined by
\begin{small}
\begin{align}
D_1 = 1 + a_1, \label{E3wdx} \
D_2 = 1-2 a_{1}+\frac{\beta  \left(a_{1}-a_{2}\right)}{\lambda}, \
D_3 = -1+a_{1}-\frac{\beta  a_{2}}{\lambda}+\frac{\left(2 a_{2}-1\right) \beta  a_{1}}{\lambda}.
\end{align}
\end{small}The stability characteristics of $E_3$ are determined by the following criteria:
The equilibrium point constitutes a sink if all three conditions are simultaneously satisfied:
$E_3$ is a sink if
\begin{eqnarray*}
&&\Lambda \in \left\{(\lambda,\mu,\beta)|\mu>1,~\lambda>\mu/(\mu-1),~\beta>\lambda\mu/(\lambda\mu - \lambda - \mu),\right.\\
&&~~~~~~\left.|D_1+D_3|<1+D_2,~|D_1-3 D_3|<3-D_2,~D_3^2+D_2-D_3 D_2<1\right\},
\end{eqnarray*}
where $D_1,D_2$ and $D_3$ are as defined in \eqref{E3wdx}.
\end{rmk}

In the next sections, we continue to discuss the bifurcation phenomena of the fixed points $O, E_1$ and $E_2$ under the non-hyperbolic cases.

\section{Codimension-1 bifurcations}

Section 3 is devoted to the bifurcations of transcritical, flip and Neimark-Sacker for all fixed points.
\subsection{Transcritical bifurcation of $O$}
\allowdisplaybreaks[4]
\setcounter{equation}{0}
In this subsection, we will discuss the transcritical bifurcation of $O$ when the parameter $\Lambda$ near $\{\lambda>0,\mu=1,\beta>0\}$, i.e., case $\mathbf{L}_{1}$ in Table \ref{Table1}.
\begin{thm}
For $\Lambda\in \mathbb{R}_+^{3}$,
system \eqref{eq2.1} undergoes a transcritical bifurcation as the parameter $\Lambda$ crosses
$\mu=1$.
In addition, if $\mu>1$, $O$ is unstable and $E_1$ is stable, and $O$, $E_1$ are coincident if $\mu=1$. For $\mu<1$ the origin $O$ still exists and remains stable but $E_1$ disappears.
\label{th3.1}
\end{thm}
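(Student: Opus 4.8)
The plan is to reduce the three-dimensional map to a scalar one by the center manifold theorem and then verify the standard non-degeneracy and transversality conditions for a transcritical bifurcation of a one-parameter family of scalar maps. First I would treat $\mu$ as the bifurcation parameter and suspend the system by adjoining the equation $\mu_{n+1}=\mu_n$, so that we study the map on $(x,y,z,\mu)$ in a neighborhood of the point $(0,0,0,1)$. From Proposition \ref{pro2.1} the Jacobian $JF(O)$ at $\mu=1$ has eigenvalues $1,0,0$, and the adjoined parameter contributes a further eigenvalue $1$. Hence the center eigenspace is spanned by the $x$- and $\mu$-directions, while the $y$- and $z$-directions are (super)stable, their eigenvalues being $0$, which lie inside the unit circle. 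The local center manifold is therefore two-dimensional, of the form $\{y=h_1(x,\mu),\ z=h_2(x,\mu)\}$ with $h_1,h_2=O(x^2)$ tangent to the center eigenspace.

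The key simplification I would exploit is that the coordinate plane $\{y=z=0\}$ is \emph{exactly} invariant under $F$: if $y=0$ then $y_{n+1}=\beta x_n y_n=0$ and $z_{n+1}=\lambda y_n z_n=0$. By uniqueness of the local center manifold this forces $h_1\equiv h_2\equiv 0$, so the center manifold coincides with the invariant $x$-axis and the reduced dynamics is governed exactly by the logistic map $g(x,\mu):=\mu x(1-x)$, together with $\mu\mapsto\mu$. This turns the reduction into an exact, rather than approximate, computation.

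Then I would check the transcritical conditions for $g$ at $(x,\mu)=(0,1)$: one has $g(0,\mu)=0$ for all $\mu$ (the trivial fixed point persists), $g_x(0,1)=1$ (the critical multiplier on the unit circle), the transversality $g_{x\mu}(0,1)=1\neq0$, and the non-degeneracy $g_{xx}(0,1)=-2\neq0$; equivalently, expanding with $\nu:=\mu-1$ gives $g(x,\mu)-x=\nu x-x^2+O(3)$, whose cross term and quadratic term are both nonzero. These are precisely the hypotheses guaranteeing a transcritical bifurcation, with the nontrivial branch of fixed points being $x=1-1/\mu$, i.e. the $x$-component of $E_1$. The stability exchange then follows by evaluating the multiplier along each branch: $g_x(0,\mu)=\mu$, so $O$ is stable for $\mu<1$ and unstable for $\mu>1$, while $g_x(1-1/\mu,\mu)=2-\mu$, so $E_1$ is stable for $1<\mu<3$. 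For $\mu<1$ the coordinate $1-1/\mu<0$ leaves the feasible region ${\mathbb R}^3_0$, which is the precise meaning of ``$E_1$ disappears,'' and at $\mu=1$ both fixed points collapse to the origin.

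The hard part, or rather the only delicate point, will be justifying rigorously that the local center manifold is \emph{exactly} the invariant line $\{y=z=0\}$: I would invoke uniqueness of the center manifold to rule out any higher-order correction in $h_1,h_2$, noting that the contracting eigenvalues here are $0$ rather than a generic value in $(-1,1)$, but that this poses no problem since $0$ is strictly inside the unit disk. Once this exact reduction is established, everything else is a routine evaluation of derivatives of the scalar logistic map, and the statement of Theorem \ref{th3.1} follows directly.
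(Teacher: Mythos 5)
Your proposal is correct in substance and follows the same overall strategy as the paper---suspend the parameter, reduce to a two-dimensional center manifold, and verify the scalar non-degeneracy and transversality conditions with the same numerical values ($-2$ and $1$)---but the reduction step is handled differently. The paper computes the center manifold order by order: it writes $X=h_{11}(Z,\theta)$, $Y=h_{12}(Z,\theta)$ as quadratic expansions, substitutes into the invariance equations, finds that all quadratic coefficients vanish, and thereby obtains the reduced map only up to $O(|(Z,\theta)|^3)$ error terms. You instead exploit the fact that $\{y=z=0\}$ is exactly invariant, so the reduction is exact and the restricted dynamics is literally the logistic map $\mu x(1-x)$; this avoids the undetermined-coefficient computation entirely and is the cleaner argument. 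One repair is needed in your justification, however: local center manifolds are in general \emph{not} unique, so ``uniqueness forces $h_1\equiv h_2\equiv 0$'' is not a valid inference. The correct (and simpler) statement is that the plane $\{y=z=0\}$, times the parameter axis, is a smooth invariant manifold tangent to---indeed equal to---the center eigenspace of the suspended map, hence it \emph{is} a center manifold, and you may carry out the reduction on it; alternatively, all center manifolds share the same Taylor expansion at the fixed point, which also gives the vanishing of the quadratic coefficients that the paper computes by hand. With that fix, plus the one-line remark that the transverse multipliers $0$ and $\beta(\mu-1)/\mu$ lie strictly inside the unit circle for $\mu$ near $1$ (so stability read off on the center manifold determines stability of $O$ and $E_1$ in the full three-dimensional system), your argument is complete and proves the theorem.
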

\begin{proof}
Let $\theta:=\mu-1$. Expanding system \eqref{eq2.1} in Taylor series, we obtain the following mapping $\mathfrak{F}:\mathbb{R}^3\rightarrow\mathbb{R}^3$, defined by
\begin{eqnarray}
\mathfrak{F}:
\left[
\begin{array}{cc}
   x \\
   y \\
   z  \\
\end{array}
\right]
\mapsto
\left[\begin{array}{cc}
  \left(1+\theta \right) x-\left(1+\theta \right) x^{2}-\left(1+\theta \right) x y-\left(1+\theta \right) z x\\
 \beta  y x-\beta  y z\\
 \lambda  y z\\
 \end{array}\right].
\label{eq3.1}
\end{eqnarray}
The eigenvectors of the Jacobian matrix $J\mathfrak{F}(O)$ are
$$(0,0,1)^T,~~(0,1,0)^T~~\mbox{and}~~(1,0,0)^T,$$
and their corresponding eigenvalues are
$$0,~~0~~\mbox{and}~~1+\theta.$$
Applying the invertible transformation
\begin{eqnarray*}
\left[
\begin{array}{cc}
   x \\
   y \\
   z
\end{array}
\right]
=\left[
   \begin{array}{ccc}
     0 & 0 & 1 \\
     0 & 1 & 0 \\
     1 & 0 & 0 \\
   \end{array}
 \right]
\left[\begin{array}{cc}
  X \\
  Y \\
  Z
 \end{array}\right],
\label{eq3.2}
\end{eqnarray*}
we translate the linear part of system \eqref{eq3.1} into the normalized form, i.e.,
\begin{eqnarray}
\left[
\begin{array}{ccc}
   X \\
   Y \\
   Z
\end{array}
\right]
\mapsto
\left[
\begin{array}{ccc}
\lambda  Y X\\
\beta  Y Z-\beta  Y X\\
\left(1+\theta \right) Z-\left(1+\theta \right) Z^{2}-\left(1+\theta \right) Z Y-\left(1+\theta \right) X Z
\end{array}\right].
\label{eq3.2}
\end{eqnarray}
Taking $\theta$ as the bifurcation parameter, system \eqref{eq3.2} is transformed into the following form
\begin{eqnarray}
\left[
\begin{array}{ccc}
   X \\
   Y \\
   Z\\
   \theta
\end{array}
\right]
\mapsto
\left[
\begin{array}{ccc}
\lambda  Y X\\
\beta  Y Z-\beta  Y X\\
\left(1+\theta \right) Z-\left(1+\theta \right) Z^{2}-\left(1+\theta \right) Z Y-\left(1+\theta \right) X Z\\
\theta\\
\end{array}\right].
\label{eq3.3}
\end{eqnarray}

Since system \eqref{eq3.3} has two eigenvalues on the unit circle $S^1$, according to the center manifold theorem (\cite[pp.33-35]{Carr}) it possesses a two-dimensional $C^2$ center manifold, i.e.,
\begin{eqnarray}
\begin{array}{l}
W_1^c(O)=\{(X,Y,Z,\theta)\in \mathbb{R}^4:X=h_{11}(Z,\theta), Y=h_{12}(Z,\theta),|Z|<\zeta_1, |\theta|<\zeta_2\},
\end{array}
\label{mf1}
\end{eqnarray}
where $h_{11}$ and $h_{12}$ are $C^2$ functions near $(0,0)$ such that
$$
h_{11}(0,0)=0,~~ Dh_{11}(0,0)=0,~~ h_{12}(0,0)=0,~~Dh_{12}(0,0)=0,
$$
and $\zeta_1$, $\zeta_2$ are both sufficiently small positive constants.
Due to the $C^2$ smoothness, functions $h_{11}(Z,\theta)$ and $h_{12}(Z,\theta)$ have the following form
\begin{eqnarray}
\begin{array}{c}
y=h_{11}(Z,\theta)=d_{11}Z^2+d_{12}Z\,\theta+d_{13}\theta^2+O(|(Z,\theta)|^3),\\
z=h_{12}(Z,\theta)=d_{21}Z^2+d_{22}Z\,\theta+d_{23}\theta^2+O(|(Z,\theta)|^3),
\end{array}
\label{mf2}
\end{eqnarray}
where $d_{11}$, $d_{12}$, $d_{13}$, $d_{21}$, $d_{22}$ and $d_{23}$ are indeterminate.
Making use of the invariance of center manifold and (\ref{mf1})-(\ref{mf2}), we have
\begin{eqnarray}
\begin{array}{c}
h_{11}(\left(1+\theta \right) Z-\left(1+\theta \right) Z^{2}-\left(1+\theta \right) Z h_{12}(Z,\theta)-\left(1+\theta \right) h_{11}(Z,\theta) Z,\theta)\\
~~~~~~~~~~~~=\lambda  h_{11}(Z,\theta) h_{12}(Z,\theta)
\end{array}
\label{coe11}
\end{eqnarray}
and
\begin{eqnarray}
\begin{array}{c}
h_{12}(\left(1+\theta \right) Z-\left(1+\theta \right) Z^{2}-\left(1+\theta \right) Z h_{12}(Z,\theta)-\left(1+\theta \right) h_{11}(Z,\theta) Z,\theta)\\
~~~~~~~~~~~~=\beta  h_{12}(Z,\theta) Z-\beta  h_{11}(Z,\theta) h_{12}(Z,\theta).
\end{array}
\label{coe12}
\end{eqnarray}
It follows that
$$d_{11}=d_{21}=d_{12}=d_{13}=d_{22}=d_{23}=0$$
by comparing the coefficients of $Z^2$, $Z\theta$ and $\theta^2$ in \eqref{coe11} and \eqref{coe12}. Using (\ref{mf2}) again we obtain
\begin{eqnarray}
\begin{array}{c}
X=h_{11}(Z,\theta)=O(|(Z,\theta)|^3),\\
Y=h_{12}(Z,\theta)=O(|(Z,\theta)|^3).
\end{array}
\label{coe2-1}
\end{eqnarray}

Next, we substitute (\ref{coe2-1}) into the last two one-dimensional mappings of (\ref{eq3.3}) and get
\begin{eqnarray*}
\left[
\begin{array}{cc}
   Z \\
   \theta
\end{array}
\right]
\mapsto
\left[\begin{array}{cc}
Z+Z \theta-Z^{2} \theta -Z^{2}+O(|(Z,\theta)|^3)\\
\theta
 \end{array}\right],
\end{eqnarray*}
which defines a one-dimensional mapping
$$
Z\mapsto g_{1}(Z):= Z+Z \theta-Z^{2} \theta -Z^{2}+O(|(Z,\theta)|^3).
$$
One can verify that
$$
\left.\frac{\partial^2g_{1}}{\partial Z^2}\right|_{(Z,\theta)=(0,0)}=-2\neq0
$$
and
$$
\left.\frac{\partial^2 g_{1}}{\partial Z \partial \theta}\right|_{(Z,\theta)=(0,0)}=1\neq0,
$$
indicating that the non-degeneracy and transversality conditions (\cite[pp.504-508]{Wiggins}) for a transcritical bifurcation are satisfied.
Consequently, system \eqref{eq2.1} produces a transcritical bifurcation as the parameter $\Lambda$ crossing $\{\lambda>0,\mu=1,\beta>0\}$.
This completes the proof.
\end{proof}
Based on the proof of Theorem \ref{th3.1}, we find the occurrence of transcritical bifurcation in system \eqref{eq2.1}, where the stability of the fixed point O is altered, and the other fixed point $E_1$ appears and disappears in both side of $\delta=0$. This serves as a threshold of the original system \eqref{eq2.1}, i.e., $\{(\lambda,\mu,\beta)\in \mathbb{R}_+^{3}:\mu=1\}$ defines a boundary between stable and unstable states.
The threshold provides a way of regulating and controlling the population densities of prey, predator and top predator:

({\bf i}) In the case that $\lambda>0$, $\mu<1$ and $\beta>0$, the population density of prey will asymptotically converge to $0$, and the population densities of the predator and top predator will also gradually approach $0$. This indicates that the prey, predator and top predator will eventually become extinct.

({\bf ii}) For $\lambda>0$, $\mu>1$ and $\beta>0$, if the initial population density of prey is close to $0$, and the initial population densities of the predator and top predator are also near $0$, then the population density of prey will rise to ${(\mu-1)}/{\mu}$ and remain stable. Meanwhile, the population densities of the predator and top predator will gradually approach $0$.

\subsection{Transcritical and flip bifurcations of $E_1$}
\allowdisplaybreaks[4]
\setcounter{equation}{0}
In this subsection, we will discuss the bifurcations that occur near the fixed point $E_1:((\mu - 1)/\mu,0,0)$. We first study the transcritical bifurcation of $E_1$ as the parameter $\Lambda$ approaches $\{\lambda>0,\mu>1,\beta={\mu}/{(\mu-1)}\}$, see cases $\mathcal {L}_{1}$ and $\mathcal {L}_{3}$ in Table \ref{Table2}.

\begin{thm}
For $\Lambda\in \mathbb{R}_+^{3}$,
system \eqref{eq2.1} undergoes a transcritical bifurcation as the parameter $\Lambda$ crosses $\beta=\mu/(\mu - 1)$.
Further, if $\beta>\mu/(\mu - 1)$, $E_1$ is unstable and $E_2$ is stable, and $E_1$, $E_2$ are coincident if $\beta=\mu/(\mu - 1)$. For $\beta<\mu/(\mu - 1)$ the fixed point $E_2$ disappears, and there exists a region $\{\lambda>0,1<\mu<3,0<\beta<\mu/(\mu - 1)\}$ such that $E_1$ remains stable.
\label{th4.1}
\end{thm}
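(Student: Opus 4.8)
The plan is to reproduce the center manifold strategy of the proof of Theorem~\ref{th3.1}, now anchored at $E_1$ and the critical surface $\beta=\mu/(\mu-1)$. First I would translate $E_1$ to the origin via $u=x-(\mu-1)/\mu$, $v=y$, $w=z$ and introduce the bifurcation parameter $\delta:=\beta-\mu/(\mu-1)$, so that criticality corresponds to $\delta=0$. At $\delta=0$ the matrix $JF(E_1)$ has eigenvalues $1$, $2-\mu$ and $0$: the eigenvalue $1$ drives the bifurcation, and provided $\mu\neq3$ the other two stay off the unit circle, giving a one-dimensional center manifold. A short calculation yields the right eigenvector $(-1,1,0)^T$ for the eigenvalue $1$ (together with $(1,0,0)^T$ and $((\mu-1)/(2-\mu),0,1)^T$ for $2-\mu$ and $0$); the decisive point is that the associated \emph{left} eigenvector for the eigenvalue $1$ is $(0,1,0)$, so the critical coordinate $\xi$ coincides with the predator perturbation $v$.

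Next I would apply the linear transformation assembled from these eigenvectors and adjoin the trivial equation $\delta\mapsto\delta$, exactly as in \eqref{eq3.3}, then invoke the center manifold theorem (\cite[pp.33-35]{Carr}) to express the two hyperbolic coordinates as $C^2$ functions of $(\xi,\delta)$ vanishing to second order. The simplification I expect to exploit is that, since $\xi=v$ and the reduced dynamics is governed by the second component $v_{n+1}=\beta v_n(x_n-z_n)$ with $x_n-z_n=(\mu-1)/\mu-\xi+O(|(\xi,\delta)|^2)$, the quadratic center manifold corrections contribute only at cubic order after multiplication by $v_n=\xi$. Hence the reduced one-dimensional map can be read off without computing any center manifold coefficients, namely
\begin{eqnarray*}
\xi\mapsto g(\xi)=\xi+\frac{\mu-1}{\mu}\,\delta\,\xi-\frac{\mu}{\mu-1}\,\xi^2+O(|(\xi,\delta)|^3).
\end{eqnarray*}

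I would then confirm the transcritical normal form conditions (\cite[pp.504-508]{Wiggins}) through
\begin{eqnarray*}
\left.\frac{\partial^2 g}{\partial\xi^2}\right|_{(0,0)}=-\frac{2\mu}{\mu-1}\neq0,\qquad
\left.\frac{\partial^2 g}{\partial\xi\,\partial\delta}\right|_{(0,0)}=\frac{\mu-1}{\mu}\neq0,
\end{eqnarray*}
both nonzero because $\mu>1$, which establishes the transcritical bifurcation at $\beta=\mu/(\mu-1)$. For the stability assertions I would solve $g(\xi)=\xi$ to obtain the two branches $\xi=0$ (the fixed point $E_1$) and $\xi^{*}=(\mu-1)^2\delta/\mu^2+O(\delta^2)$ (the fixed point $E_2$, admissible only when $\delta>0$), and then evaluate $g'(0)=1+(\mu-1)\delta/\mu$ and $g'(\xi^{*})=1-(\mu-1)\delta/\mu$. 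These show the two branches exchange stability across $\delta=0$; combining this center direction information with the hyperbolic directions catalogued in Tables~\ref{Table2}--\ref{Table3} gives the stated stability of $E_1$ and $E_2$ on each side of $\beta=\mu/(\mu-1)$, and in particular the region $\{\lambda>0,\,1<\mu<3,\,0<\beta<\mu/(\mu-1)\}$ on which $E_1$ stays stable.

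The main obstacle, aside from the bookkeeping in the linear change of variables, is justifying that the one-dimensional reduction is uniform over the two admissible cases $\mathcal{L}_1$ and $\mathcal{L}_3$: one must exclude $\mu=3$, where $2-\mu=-1$ makes a flip eigenvalue collide with the transcritical one (the genuinely degenerate point treated separately). Once the left eigenvector $(0,1,0)$ is used to bypass the explicit center manifold expansion, the remaining verification is routine.
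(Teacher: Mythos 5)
Your proposal is correct and follows essentially the same route as the paper: a parametrized center-manifold reduction at $E_1$ with $\delta=\beta-\mu/(\mu-1)$, followed by verification of the Wiggins transcritical conditions, and your reduced map and the two derivative values $-2\mu/(\mu-1)$ and $(\mu-1)/\mu$ coincide exactly with the paper's $g_2$. Your left-eigenvector observation (that the center coordinate equals the predator perturbation $v$, so quadratic center-manifold corrections enter only at cubic order) merely streamlines the coefficient computation the paper carries out explicitly, and your branch/multiplier calculation makes the exchange-of-stability claims more explicit than the paper's proof, which stops after checking the bifurcation conditions.
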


\begin{proof}
Let $\theta_1:=\beta-\mu/(\mu - 1)$.  As done in the proof of Theorem \ref{th3.1}, we translate $E_{1}$ to the origin $O$ and restrict the mapping $F$ to a two dimensional $C^2$ center manifold
\begin{eqnarray}
\begin{array}{c}
~~~~~~~~~~~~~~~~~~~u_1=h_{21}(v_1,\theta_1)={-\frac{\mu}{\mu^{2}-2 \mu +1}\,v_1^2}+O(|(v_1,\theta_1)|^3),\\
w_1=h_{22}(v_1,\theta_1)=O(|(v_1,\theta_1)|^3),
\end{array}
\end{eqnarray}
which defines a one-dimensional mapping
$$
v_1\mapsto g_{2}(v_1):= v_1-\frac{v_1^{2} \mu}{\mu -1}+\frac{\theta_1  v_1 \left(\mu -1\right)}{\mu}+O(|(v_1,\theta_1)|^3).
$$

One can check that
$$
\left.\frac{\partial^2g_{2}}{\partial v_1^2}\right|_{(v_1,\theta_1)=(0,0)}=-\frac{2 \mu}{\mu -1}\neq0
$$
and
$$
\left.\frac{\partial^2 g_{2}}{\partial v_1 \partial \theta_1}\right|_{(v_1,\theta_1)=(0,0)}=\frac{\mu -1}{\mu}\neq0,
$$
imply that the non-degeneracy and transversality conditions
for a transcritical bifurcation are fulfilled.
Consequently, system \eqref{eq2.1} undergoes a transcritical bifurcation as the parameter $\Lambda$ crosses $\{\lambda>0,\mu>1,\beta={\mu}/{(\mu-1)}\}$.
This completes the proof.
\end{proof}
Similarly, a flip bifurcation may occur in a neighborhood of $E_1$. In what follows, we focus on the cases $\mathcal {D}_{42}$ and $\mathcal {D}_{45}$ in Table \ref{Table2}.

\begin{thm}
As the parameter $\Lambda\in \mathbb{R}^{3}_{+}$ crosses $\mu=3$, system \eqref{eq2.1} undergoes a flip bifurcation near the fixed point $E_1$. Specifically, system \eqref{eq2.1} undergoes a supercritical flip bifurcation, and generates a stable period-two cycle from the region $\mathcal {D}_{1}$ to $\mathcal {D}_{32}$ and from the region $\mathcal {D}_{31}$ to $\mathcal {D}_{2}$ respectively.
\label{flipE1}
\end{thm}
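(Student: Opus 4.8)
The plan is to follow the center-manifold template already used in the proofs of Theorems~\ref{th3.1} and~\ref{th4.1}, now adapted to a multiplier crossing $-1$. Recall from the proof of Proposition~\ref{pro2.1} that $JF(E_1)$ is upper triangular with eigenvalues $0$, $-\mu+2$ and $\beta(\mu-1)/\mu$; setting $\theta_2:=\mu-3$ as the bifurcation parameter, the second multiplier becomes $-\mu+2=-(1+\theta_2)$, so it equals $-1$ exactly at $\theta_2=0$, while the remaining two multipliers are $0$ and $2\beta/3$. Since $E_1=((\mu-1)/\mu,0,0)$ persists for all $\mu>1$, after the substitution $x=(\mu-1)/\mu+\tilde x$, $y=\tilde y$, $z=\tilde z$ the fixed point stays at the origin for every $\theta_2$, which is exactly the persistent-fixed-point setting required for a flip. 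First I would compute the eigenvectors for $-1$, $0$ and $2\beta/3$ (explicit, since the Jacobian is triangular) and use them to diagonalize the linear part, isolating the center coordinate $v$ associated with the multiplier $-(1+\theta_2)$.

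Because the two non-central multipliers $0$ and $2\beta/3$ lie off the unit circle precisely when $\beta\neq 3/2$ --- and the two transition regimes in the statement correspond to $0<\beta<3/2$ (so $2\beta/3\in(0,1)$, the crossings $\mathcal D_1\to\mathcal D_{32}$) and $\beta>3/2$ (so $2\beta/3>1$, the crossings $\mathcal D_{31}\to\mathcal D_2$) --- the center manifold theorem (\cite[pp.33-35]{Carr}) yields a one-dimensional center manifold in both regimes, with the $0$- and $2\beta/3$-directions slaved to $(v,\theta_2)$. I would write these slaved coordinates as graphs quadratic in $(v,\theta_2)$ with undetermined coefficients and solve the invariance equations, exactly as in \eqref{coe11}-\eqref{coe12}, to fix them.

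The crucial structural observation is that $F$ in \eqref{eq2.1} contains only quadratic nonlinearities; hence the reduced map takes the form $g_3(v,\theta_2)=-(1+\theta_2)v+A\,v^2+B\,v^3+O(|(v,\theta_2)|^4)$, in which the quadratic coefficient $A$ comes directly from projecting the quadratic terms of $F$ onto the center eigenvector, whereas the cubic coefficient $B$ is produced \emph{entirely} by substituting the quadratic center-manifold graph back into those same quadratic terms. I would then check the two flip conditions. Transversality is immediate, since $\partial_v g_3(0,\theta_2)=-(1+\theta_2)$ gives $\partial^2 g_3/\partial v\,\partial\theta_2|_{(0,0)}=-1\neq0$, so the multiplier crosses $-1$ with nonzero speed. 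For non-degeneracy I would form the standard quantity $\tfrac14(\partial^2 g_3/\partial v^2)^2+\tfrac16(\partial^3 g_3/\partial v^3)=A^2+B$ and compute the second iterate, which yields $g_3^2(v,\theta_2)=(1+2\theta_2)v-2(A^2+B)v^3+\cdots$. Solving $g_3^2(v,\theta_2)=v$ gives $v^2=\theta_2/(A^2+B)$, and evaluating $\partial_v g_3^2$ at such a point gives $1-4\theta_2$; thus when $A^2+B>0$ a stable period-two cycle branches off for $\theta_2>0$ (i.e.\ $\mu>3$), precisely a supercritical flip. Establishing that $A^2+B>0$ throughout the two $\beta$-ranges, then combining with the multiplier count of Table~\ref{Table2}, identifies the two crossings: along $\mu=3$ with $0<\beta<3/2$ the node $\mathcal D_1$ loses stability to the saddle $\mathcal D_{32}$, and with $\beta>3/2$ the saddle $\mathcal D_{31}$ becomes the unstable node $\mathcal D_2$, in both cases shedding a stable $2$-cycle.

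The main obstacle is this non-degeneracy coefficient: since all cubic information in $g_3$ is generated by the center manifold rather than by $F$ itself, one must carry the quadratic center-manifold expansion accurately and track its feedback through the quadratic nonlinearity to obtain $\partial^3 g_3/\partial v^3(0,0)$ correctly, and then verify that $A^2+B$ is \emph{strictly positive} across the relevant $\beta$-ranges rather than merely nonzero, which is what pins down supercriticality uniformly in both regimes.
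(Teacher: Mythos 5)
Your proposal is correct and follows essentially the same route as the paper's proof: translate $E_1$ to the origin, reduce to a one-dimensional center manifold associated with the multiplier $-(1+\theta_2)$ (the other multipliers $0$ and $2\beta/3$ being off the unit circle for $\beta\neq 3/2$), check transversality and the non-degeneracy constant, and obtain the stable period-two cycle from the second iterate. The only remark worth adding is that the computation collapses more than you anticipate: the center eigenvector at $\mu=3$ lies along the invariant axis $y=z=0$, so the reduced map is exactly the logistic restriction $u\mapsto -(1+\theta_2)u-(3+\theta_2)u^2$ with no cubic term and no $\beta$-dependence, i.e.\ $A=-3$, $B=0$, hence $A^2+B=9>0$ uniformly in $\beta$ — precisely what the paper's center-manifold graphs $h_{31},h_{32}=O(|(u_2,\varpi)|^3)$ and its non-degeneracy value $\tfrac12\bigl(\partial^2 g_3/\partial u_2^2\bigr)^2+\tfrac13\,\partial^3 g_3/\partial u_2^3=18>0$ encode, so the uniform positivity you flag as the main obstacle is automatic.
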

\begin{proof}
Let $\varpi:=\mu-3$. Similar to the proof of Theorem \ref{th3.1}, we translate $E_{1}$ to the origin $O$ and restrict the mapping $F$ to a two dimensional $C^2$ center manifold
\begin{eqnarray*}
\begin{array}{c}
v_2=h_{31}(u_2,\varpi)=O(|(u_2,\varpi)|^3),\\
w_2=h_{32}(u_2,\varpi)=O(|(u_2,\varpi)|^3)
\end{array}
\end{eqnarray*}
that defines a one-dimensional mapping
\begin{eqnarray}
\begin{array}{c}
u_2\mapsto g_{3}(u_2):=-u_2-\varpi\,u_2-3 u^2_2-\varpi\,u^2_2+O(|(u_2,\varpi)|^4).
\end{array}
\label{qu}
\end{eqnarray}

One can calculate that
\begin{eqnarray*}
\left.\left(\frac{\partial g_{3}}{\partial \varpi}\frac{\partial^2g_{3}}{\partial u^2_2}+2\,\frac{\partial^2g_{3}}{\partial u_2 \partial\varpi}\right)\right|_{(u_2,\varpi)=(0,0)}=-2<0
\end{eqnarray*}
and
\begin{eqnarray}
\begin{array}{c}
\left.\left(\frac{1}{2}\,\left(\frac{\partial^2g_{3}}{\partial u_2^2}\right)^2+\frac{1}{3}\,\frac{\partial^3g_{3}}{\partial u_2 ^3}\right)\right|_{(u_2,\varpi)=(0,0)}=18>0.
 \end{array}
\label{coe4}
\end{eqnarray}
Hence, the conditions (F1) and (F2) of Theorem 3.5.1 in \cite[p.158]{Guckenheimer}
are fulfilled, implying that when the parameter $\Lambda$ crosses $\{\lambda>0,\beta>1,\mu=3\}$, system \eqref{eq2.1} undergoes a flip bifurcation near the fixed point $E_1$.

Note that system \eqref{qu} is topologically equivalent to the subsequent mapping
\begin{eqnarray}
\begin{array}{l}
u_2\mapsto g^*_{3}(u_2):=-u_2-\varpi\,u_2-3 u^2_2-\varpi\,u^2_2,
\label{quyu1}
\end{array}
\end{eqnarray}
we re-expand $g^{\ast}_{3}$ in Taylor series with $u_2$ and then obtain
\begin{eqnarray}
g^{\ast}_{3}(u_2)=f_1(\varpi)\,u_2+f_2(\varpi)\,u_2^2+f_3(\varpi)\,u_2^3+O(u_2^4),
\label{quyu2}
\end{eqnarray}
where $f_1,f_2$ and $f_3$ are $C^1$ functions, satisfying
\begin{eqnarray*}
&&f_1(\varpi)=-(1+g(\varpi)),~~g(\varpi)=\varpi,~~f_2(\varpi)=-3-\varpi,~~f_3(\varpi)=0.
\end{eqnarray*}
Since $g(0)=0$ and
$$
\left.\frac{\mathrm{d}g}{\mathrm{d}\varpi}\right|_{\varpi=0}=-\left.\frac{\partial^2g_3}{\partial u_2\partial\varpi}\right|_{(u_2, \varpi)=(0,0)}=1\neq0,
$$
it follows that the function $g$ is locally invertible. Thus, mapping \eqref{quyu2} can be  transformed into
$$
\tilde{w}_2=\kappa(\varpi_1)\,u_2+c(\varpi_1)\,u_2^2+d(\varpi_1)\,u_2^3+O(u_2^4)
$$
by setting
\begin{eqnarray*}
\varpi_1:=g(\varpi)=\varpi,
\end{eqnarray*}
where the functions $\kappa(\varpi_1)=-(1+\varpi_1)$, $c(\varpi_1)$ and $d(\varpi_1)$ are all $C^1$.
Consequently, we get
\begin{eqnarray*}
c(0)=f_2(0)=\frac{1}{2}\,\frac{\partial^2g_{3}}{\partial u_2^2}, ~~~d(0)=\frac{1}{6}\frac{\partial^3g_{3}}{\partial u_2 ^3}.
\label{eq05162}
\end{eqnarray*}

Define a smooth transformation of coordinate by
\begin{eqnarray}
u_2=u_3+\varpi_2u_3^2,
\label{quyu3}
\end{eqnarray}
where $\varpi_2=\varpi_2(\varpi_1)$ is a $C^1$ function to be determined. Clearly, the transformation \eqref{quyu3} is invertible in a sufficiently small neighborhood of the origin $O$, and its inverse can be derived by using the method of undetermined coefficients as
\begin{eqnarray}
u_3=u_2-\varpi_2u_2^2+2\varpi_2^2u_2^3+O(u_2^4).
\label{quyu4}
\end{eqnarray}
According to \eqref{quyu3}-\eqref{quyu4}, we get
\begin{small}
\begin{eqnarray}\label{karpa}
\!\!\!\!\!\tilde{u}_3=\kappa u_3+(c+\varpi_2\kappa-\varpi_2\kappa^2)u_3^2+(d+2\varpi_2c-2\varpi_2\kappa(\varpi_2\kappa+c)+2\varpi_2^2\kappa^3)u_3^3
+O(u_3^4).
\end{eqnarray}
\end{small}Hence, the quadratic term of (\ref{karpa}) can be eliminated for sufficiently small $|\varpi_1|$ by setting
$$
\varpi_2(\varpi_1)=\frac{c(\varpi_1)}{\kappa^2(\varpi_1)-\kappa(\varpi_1)}
$$
because $\kappa^2(0)-\kappa(0)=2\neq0$. Thus, (\ref{karpa}) is transformed into
\begin{eqnarray*}
\tilde{u}_3=\kappa u_3+(d+\frac{2c^2}{\kappa^2-\kappa})u_3^3+O(u_3^4)=-(1+\varpi_1)u_3+e(\varpi_1)u_3^3+O(u_3^4)
\end{eqnarray*}
for certain $C^1$ function $e$ such that
$$
e(0)=c^2(0)+d(0)=\frac{1}{4}\,\left.\left(\frac{\partial^2g_{3}}{\partial u_2^2}\right)^2\right|_{(u_2,\varpi)=(0,0)}+\frac{1}{6}\,\left({\left.\frac{\partial^3g_{3}}{\partial u_2 ^3}\right)}\right|_{(u_2,\varpi)=(0,0)}.
$$

It is worth mentioning that $e(0)>0$ by \eqref{coe4}. Rescaling $u_3=u_4/\sqrt{|e(\varpi_1)|}$ we obtain
\begin{eqnarray}
u_4\mapsto g_{3\tau}(u_4):=-(1+\varpi_1)u_4+u_4^3+O(u_4^4).
\label{quyu5}
\end{eqnarray}
Based on the proof of \cite[Theorem~4.4, p.129]{Kuznetsov}, mapping \eqref{quyu5} is topologically equivalent to the following form
\begin{eqnarray}
u_4\mapsto g_{3*}(u_4)=-(1+\varpi_1)u_4+u_4^3
\label{quyu6}
\end{eqnarray}
near the origin $O$.
Finally, we compute the second iterate of mapping \eqref{quyu6} and get
\begin{eqnarray*}
\!\!\!\!\!\!\!\!\!\!g_{3*}^2(u_4)=\left( -\varpi_1-1 \right) ^{2}u_4+ \left(  \left( -\varpi_1-1 \right)\left( 1+\varpi_1 \right) ^{2}-\varpi_1-1 \right) u_4^{3}+O(u_4^5).
\end{eqnarray*}
It is straightforward to verify that $g_{3*}^2$ has a trivial fixed point $u_{40}=0$ and two nontrivial fixed points
$$
u_{41,42}=\pm\sqrt{\varpi_1}
$$
for $0<\varpi_1\ll1$.
One can check that $|Dg_{3*}^2(u_{41})|<1$ (resp. $|Dg_{3*}^2(u_{42})|<1$).
So $u_{41}$ and $u_{42}$  are stable and constitute a stable period-two cycle for the original mapping $g_{3*}$.
This indicates that the mapping \eqref{quyu1} produces a stable period-two cycle near $E_1$ for $0<\varpi_1\ll1$. Besides, system \eqref{eq2.1} generates a stable period-two cycle near the fixed point $E_1$ for $0<\varpi_1\ll1$.
Therefore, according to \eqref{quyu6} system \eqref{eq2.1} undergoes a supercritical flip bifurcation, and generates a stable period-two cycle from the region $\mathcal {D}_{1}$ to $\mathcal {D}_{32}$ and from the region $\mathcal {D}_{31}$ to $\mathcal {D}_{2}$ respectively.
This completes the proof.
\end{proof}

\subsection{Transcritical and flip bifurcations of $E_2$}
\allowdisplaybreaks[4]
\setcounter{equation}{0}
In this subsection, we continue to discuss the bifurcations that occur near the fixed point $E_2:(1/\beta,(\beta  \mu -\beta -\mu)/\beta  \mu,0)$. We first consider the transcritical bifurcation near $E_2$ when the parameter $\Lambda$ is close to $\{\lambda>\mu/(\mu - 1), \mu>1, \beta = \lambda\mu/(\lambda\mu - \lambda - \mu)\}$, i.e., case $\mathfrak{L}_{11}$ in Table \ref{Table3}.
\begin{thm}
For parameter $\Lambda\in \mathbb{R}_+^{3}$,
system \eqref{eq2.1} undergos a transcritical bifurcation as $\Lambda$ crosses
$\beta = \lambda\mu/(\lambda\mu - \lambda - \mu)$.
Concretely, if $\beta>\lambda\mu/(\lambda\mu - \lambda - \mu)$, $E_2$ is unstable, and $E_2$, $E_3$ are coincident if $\beta=\lambda\mu/(\lambda\mu - \lambda - \mu)$. When $\beta<\lambda\mu/(\lambda\mu - \lambda - \mu)$,  $E_2$ exists and becomes stable, but $E_3$ disappears.
\label{th5.1}
\end{thm}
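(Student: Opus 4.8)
The plan is to mirror the center-manifold argument used for Theorems~\ref{th3.1} and \ref{th4.1}, now adapted to the fixed point $E_2$. I would first set the bifurcation parameter $\theta_2:=\beta-\lambda\mu/(\lambda\mu-\lambda-\mu)$ and observe that the critical surface $\beta=\lambda\mu/(\lambda\mu-\lambda-\mu)$ is algebraically equivalent to $\lambda(\beta\mu-\beta-\mu)/(\beta\mu)=1$. Hence at criticality exactly the real eigenvalue of $JF(E_2)$ arising from the third coordinate equals $+1$, while the remaining multipliers $t_{1,2}$, the roots of $P_{E_2}(t)$ in \eqref{solu-1}, stay off the unit circle throughout case $\mathfrak{L}_{11}$. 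A single critical multiplier equal to $+1$ is the hallmark of a transcritical (not flip) bifurcation, so the codimension is one.

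Next I would translate $E_2$ to the origin and Taylor-expand $F$, then apply the linear change of variables that puts the critical eigendirection on one axis and the two hyperbolic eigendirections on the others. A useful simplification is that the third row of $JF(E_2)$ is $(0,0,\lambda(\beta\mu-\beta-\mu)/(\beta\mu))$, so the $z$-direction already decouples at linear order and the critical direction is essentially the $z$-axis; this keeps the diagonalizing transformation comparatively mild. The center manifold theorem then furnishes a two-dimensional $C^2$ center manifold $W^c(E_2)$, parametrized by the critical variable $v_2$ and $\theta_2$, with the two hyperbolic coordinates given as $C^2$ functions $h_{4i}(v_2,\theta_2)=O(|(v_2,\theta_2)|^2)$ that vanish together with their first derivatives at the origin.

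Substituting these expressions into the critical component produces the reduced one-dimensional map $v_2\mapsto g_4(v_2)$, and the proof concludes by verifying the transcritical conditions of \cite[pp.504--508]{Wiggins}: the non-degeneracy $\partial^2 g_4/\partial v_2^2|_{(0,0)}\neq0$ and the transversality $\partial^2 g_4/\partial v_2\,\partial\theta_2|_{(0,0)}\neq0$. The transversality coefficient comes from differentiating the critical multiplier $\lambda(\beta\mu-\beta-\mu)/(\beta\mu)$ in $\theta_2$ and is manifestly nonzero, while the quadratic coefficient is driven by the nonlinearities $-\mu x^2,\,-\mu xy,\,\beta xy,\,\lambda yz$ of $F$ and should reduce to a nonzero rational function of $(\lambda,\mu,\beta)$. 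Finally I would read off the stability exchange: for $\beta>\lambda\mu/(\lambda\mu-\lambda-\mu)$ the critical multiplier exceeds $1$ so $E_2$ is unstable while $E_3$ splits off (consistent with the existence region in Proposition~\ref{pro2.1}), the two points coincide at criticality, and for $\beta<\lambda\mu/(\lambda\mu-\lambda-\mu)$ the point $E_3$ disappears and $E_2$ regains stability.

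The main obstacle is the center-manifold computation itself. Because $E_2$ has generic nonzero coordinates, the quadratic part of the reduced map is a genuinely unwieldy rational expression in three parameters, and the real work is to confirm that the non-degeneracy coefficient neither vanishes identically nor vanishes on the surface $\mathfrak{L}_{11}$; a spurious cancellation there would upgrade the degeneracy and invalidate the transcritical classification. I expect to carry this out with symbolic computation, leaning on the decoupled $z$-direction to keep the eigenvector algebra and the coefficient extraction manageable.
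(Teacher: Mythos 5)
Your proposal follows essentially the same route as the paper: set $\theta_2=\beta-\lambda\mu/(\lambda\mu-\lambda-\mu)$, translate $E_2$ to the origin, reduce to a one-dimensional map on a two-dimensional $C^2$ center manifold (exploiting that the third row of $JF(E_2)$ is $(0,0,\lambda(\beta\mu-\beta-\mu)/(\beta\mu))$, so the critical multiplier crosses $+1$), and verify the Wiggins non-degeneracy and transversality conditions. The explicit computation you defer to symbolic algebra is exactly what the paper carries out, obtaining $\partial^2 g_4/\partial w_5^2|_{(0,0)}=-4\lambda\neq0$ and $\partial^2 g_4/\partial w_5\partial\theta_2|_{(0,0)}=\left(\left(\mu-1\right)\lambda-\mu\right)^2/(\mu^2\lambda)\neq0$, the latter agreeing with your prediction that it equals the derivative of the critical multiplier in $\theta_2$.
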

\begin{proof}
Let $\theta_2:=\beta-\lambda\mu/(\lambda\mu - \lambda - \mu)$. As done in the proof of Theorem \ref{th3.1}, by translating $E_{2}$ to the origin $O$ and restricting mapping $F$ into a two dimensional $C^2$ center manifold
\begin{eqnarray*}
&&u_5=h_{41}(w_5,\theta_2)=\frac{\left(\lambda -1\right)^{4} \mu^{5}-4 \left(\lambda +\frac{1}{8}\right) \left(\lambda -1\right)^{2} \lambda  \,\mu^{4}}{B}\,w_5^2\\
&&~~~~~~~~~~~~~~~~~~~~~+\frac{\left(6 \lambda^{4}-3 \lambda^{3}+\left(A-1\right) \lambda^{2}-3 A \lambda +2 A\right) \mu^{3}}{B}\,w_5^2\\
&&~~~~~~~~~~~~~~~~~~~~~+\frac{\left(-4 \lambda^{4}-3 A \lambda^{2}-\frac{1}{2} \lambda^{3}+\frac{7}{2} A \lambda -\frac{1}{2} A\right) \mu^{2}}{B}\,w_5^2\\
&&~~~~~~~~~~~~~~~~~~~~~+\frac{\left(\lambda^{4}+3 A \lambda^{2}-\frac{1}{2} A \lambda \right) \mu -A \lambda^{2}}{B}\,w_5^2+O(|(w_5,\theta_2)|^3),\\
&&v_5=h_{42}(w_5,\theta_2)=\frac{\left(\lambda -1\right)^{4} \mu^{5}-4 \left(\lambda +\frac{1}{8}\right) \left(\lambda -1\right)^{2} \lambda  \,\mu^{4}}{B}\,w_5^2\\
&&~~~~~~~~~~~~~~~~~~~~~+\frac{\left(6 \lambda^{4}-3 \lambda^{3}+\left(-A-1\right) \lambda^{2}+3 A \lambda -2 A\right) \mu^{3}}{B}\,w_5^2\\
&&~~~~~~~~~~~~~~~~~~~~~+\frac{\left(-4 \lambda^{4}+3 A \lambda^{2}-\frac{1}{2} \lambda^{3}-\frac{7}{2} A \lambda +\frac{1}{2} A\right) \mu^{2}}{B}\,w_5^2\\
&&~~~~~~~~~~~~~~~~~~~~~+\frac{\left(\lambda^{4}-3 A \lambda^{2}+\frac{1}{2} A \lambda \right) \mu +A \lambda^{2}}{B}\,w_5^2+O(|(w_5,\theta_2)|^3),
\label{coe2}
\end{eqnarray*}
where
\begin{eqnarray*}
&&A:=\lambda\,\sqrt{\mu^{4} \lambda^{2}-2 \mu^{3} \lambda^{2}-2 \lambda  \,\mu^{4}+\lambda^{2} \mu^{2}-2 \mu^{3} \lambda +\mu^{4}},\\
&&B:=\frac{2 \lambda}{\mu^{2} \left(\lambda  \mu -\lambda -\mu \right) \left(\lambda^{2} \mu^{2}-2 \lambda^{2} \mu -2 \lambda  \,\mu^{2}+\lambda^{2}-2 \lambda  \mu +\mu^{2}\right)},
\end{eqnarray*}
we obtain a one-dimensional mapping
$$
w_5\mapsto g_{4}(w_5):=w_5-2 w_5^{2} \lambda +\frac{\left(\lambda^{2} \mu^{2}-2 \lambda^{2} \mu -2 \lambda  \,\mu^{2}+\lambda^{2}+2 \lambda  \mu +\mu^{2}\right) \theta_2  w_5}{\mu^{2} \lambda}+O(|(w_5,\theta_2)|^3).
$$

One can check that
$$
\left.\frac{\partial^2g_{4}}{\partial w_5^2}\right|_{(w_5,\theta_2)=(0,0)}=-4\lambda\neq0
$$
and
$$
\left.\frac{\partial^2 g_{4}}{\partial w_5 \partial \theta_2}\right|_{(w_5,\theta_2)=(0,0)}=\frac{\left(\left(\mu -1\right) \lambda -\mu \right)^{2}}{\mu^{2} \lambda}\neq0,
$$
which indicate that the non-degeneracy and transversality conditions of a
transcritical bifurcation are satisfied. Consequently, system (2.1) undergoes a transcritical bifurcation as the parameter $\Lambda$ crosses $\{\lambda>\mu/(\mu - 1), \mu>1, \beta = \lambda\mu/(\lambda\mu - \lambda - \mu)\}$. This completes the proof.
\end{proof}
Similarly, a flip bifurcation may occur near $E_2$. We focus on the cases $\mathfrak{D}_{33}$ and $\mathfrak{D}_{34}$ in Table \ref{Table3}.

\begin{thm}
When the parameter $\Lambda\in \mathbb{R}_+^{3}$ crosses $\mu=-3 \beta/(\beta-3)$ and $\beta=2$, system \eqref{eq2.1} undergoes a flip bifurcation near the fixed point $E_2$. Specifically, system \eqref{eq2.1} undergoes a subcritical flip bifurcation, and generates an unstable period-two cycle from the region $\mathfrak{D}_{33}$ to $\mathfrak{D}_{34}$.
\label{flipE2}
\end{thm}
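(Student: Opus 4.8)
The plan is to mirror the flip analysis of $E_1$ carried out in Theorem \ref{flipE1}, transporting every step to the fixed point $E_2$ and to the critical surface $\mu=-3\beta/(\beta-3)$. First I would confirm that this surface is exactly the flip locus: evaluating the quadratic factor of $\mathcal{P}_{E_2}$ at $t=-1$ gives
$$
P_{E_2}(-1)=\frac{3\mu-3\beta-\mu\beta}{\beta},
$$
which vanishes precisely when $\mu=-3\beta/(\beta-3)$, so one eigenvalue of $JF(E_2)$ equals $-1$ there (case $\mathfrak{L}_{22}$ in Table \ref{Table3}). Fixing $\beta=2$ — the value singled out in the statement, giving $\mu=6$ — the sum and product of the roots of $P_{E_2}$ become $(2\beta-\mu)/\beta=-1$ and $\mu(\beta-2)/\beta=0$, so the remaining root of the quadratic factor is $t_2=0$, while the third eigenvalue reduces to $\lambda(\beta\mu-\beta-\mu)/(\beta\mu)=\lambda/3$. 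Hence the critical eigenvalue $-1$ is simple; for $0<\lambda<3$ the eigenvalues $0$ and $\lambda/3$ lie strictly inside the unit circle and the center manifold is one-dimensional, which is exactly why the special value $\beta=2$ is chosen (it collapses one stable direction to the super-contracting value $0$ and makes the reduction tractable).

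Next I would set the bifurcation parameter $\varpi:=\mu+3\beta/(\beta-3)$ (equivalently $\varpi=\mu-6$ at $\beta=2$), translate $E_2$ to the origin, and expand $F$ from \eqref{eq2.1} in Taylor series to third order. As in Theorem \ref{flipE1}, an invertible linear change of coordinates diagonalizes the linear part, isolating the eigendirection of $-1$ from the two strongly stable directions of $0$ and $\lambda/3$. The center manifold theorem (\cite{Carr}) then furnishes a two-dimensional $C^2$ center manifold on which the stable coordinates are $C^2$ functions of the critical coordinate $u$ and $\varpi$; solving the invariance equations by coefficient matching yields a reduced one-dimensional map
$$
u\mapsto g(u)=-(1+\varpi)u+a_2(\varpi)\,u^2+a_3(\varpi)\,u^3+O(|(u,\varpi)|^4).
$$

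With $g$ in hand I would verify the flip conditions (F1)--(F2) of Theorem 3.5.1 in \cite[p.158]{Guckenheimer}, namely the transversality quantity $\big(\partial_\varpi g\,\partial_u^2 g+2\,\partial^2_{u\varpi}g\big)\big|_{(0,0)}\neq0$ and the non-degeneracy quantity $\big(\tfrac12(\partial_u^2 g)^2+\tfrac13\partial_u^3 g\big)\big|_{(0,0)}\neq0$. Then, following the normalization chain of Theorem \ref{flipE1} — eliminating the quadratic term by a near-identity transformation, reparametrizing, rescaling, and invoking \cite[Theorem~4.4, p.129]{Kuznetsov} to pass to $u\mapsto-(1+\varpi_1)u+e(\varpi_1)u^3$ — the decisive point is the \emph{sign} of the cubic coefficient $e(0)=\tfrac14(\partial_u^2 g)^2+\tfrac16\partial_u^3 g$. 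Whereas $e(0)>0$ produced the supercritical (stable) period-two cycle for $E_1$, here I expect $e(0)<0$; the rescaled normal form is then $u\mapsto-(1+\varpi_1)u-u^3$, whose second iterate has nontrivial fixed points $u_{1,2}=\pm\sqrt{-\varpi_1}$ existing for $\varpi_1<0$, and a direct computation of $(g^2_\ast)'$ at these points gives a multiplier exceeding $1$ in modulus. This is precisely a subcritical flip producing an \emph{unstable} period-two cycle on the $\varpi<0$ side, i.e. accompanying the transition recorded as $\mathfrak{D}_{33}\to\mathfrak{D}_{34}$ in Table \ref{Table3}.

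The main obstacle will be the explicit computation of the center manifold and of the cubic coefficient $e(0)$ at $E_2$: unlike $JF(E_1)$, the matrix $JF(E_2)$ is not block-triangular, so its genuinely coupled $2\times2$ prey--predator block forces the diagonalizing transformation and the third-order invariance matching to be carried out with the full symbolic expressions in $\mu,\beta,\lambda$. Keeping these manipulations manageable and pinning down the sign of $e(0)<0$ unambiguously is where the real work lies; specializing to $\beta=2$ (so that $t_2=0$) is exactly what renders both the reduction and the sign determination feasible on a personal computer.
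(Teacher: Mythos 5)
Your proposal follows essentially the same route as the paper's proof: translate $E_2$ to the origin with bifurcation parameter $\theta_3=\mu+3\beta/(\beta-3)$, reduce to a two-dimensional center manifold, specialize to $\beta=2$, verify the Guckenheimer--Holmes conditions (F1)--(F2), and reuse the normal-form chain of Theorem \ref{flipE1} to read off subcriticality from the sign of the cubic coefficient. The paper's explicit reduction yields $g_5(v_6)=-v_6+30v_6^2-\tfrac12 v_6\theta_3-972v_6^3+2\theta_3 v_6^2+\cdots$, which confirms your predicted signs (transversality $-1<0$ and nondegeneracy $\tfrac12(60)^2+\tfrac13(-5832)=-144<0$, hence $e(0)<0$) and therefore your conclusion of a subcritical flip with an unstable period-two cycle.
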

\begin{proof}
Let $\theta_3:=\mu+3 \beta/(\beta-3)$. Similar to the proof of Theorem \ref{th3.1}, we translate $E_{2}$ to the origin $O$ and restrict mapping $F$ into a two dimensional $C^2$ center manifold
\begin{eqnarray*}
&&u_6=h_{41}(v_6,\theta_3)=O(|(v_6,\theta_3)|^3),\\
&&w_6=h_{42}(v_6,\theta_3)=\frac{21 (\beta^{3}-\frac{117}{14} \beta^{2}+\frac{621}{28} \beta -\frac{135}{7}) \beta}{40 (\beta -\frac{3}{2})^{2} (\beta^{2}-\frac{21}{5} \beta +\frac{9}{2})}\,v_6^2\\
&&~~~~~~~~~~~~~~~~~~~~+\frac{3 (-128 \beta^{4}+1260 \beta^{3}-4698 \beta^{2}+7803 \beta -4860) \sqrt{\beta^{2} (-9+4 \beta)^{2}}}{2560 (\beta -\frac{3}{2})^{2} (\beta^{2}-\frac{21}{5} \beta +\frac{9}{2}) (\beta -\frac{9}{4})^{2}}\,v_6^2\\
&&~~~~~~~~~~~~~~~~~~~~+O(|(v_6,\theta_3)|^3),
\end{eqnarray*}
and then obtain a one-dimensional mapping
\begin{small}
\begin{eqnarray*}
\!\!\!\!\!\!\!\!\!&&v_6\mapsto g_{5}(v_6):=\frac{v_6 (16 \beta^{3}+2 \sqrt{\beta^{2} (-9+4 \beta)^{2}}\, \beta -72 \beta^{2}-3 \sqrt{\beta^{2} (-9+4 \beta)^{2}}+81 \beta)}{\sqrt{\beta^{2} (-9+4 \beta)^{2}}\, (2 \beta -6)}\\
\!\!\!\!\!\!\!\!\!&&~~~~~~~~~~~~~~~~~+\frac{6 v_6^{2} (4 \sqrt{\beta^{2} (-9+4 \beta)^{2}}\, \beta^{2}-18 \sqrt{\beta^{2} (-9+4 \beta)^{2}}\, \beta) \beta}{8 \sqrt{\beta^{2} (-9+4 \beta)^{2}}\, (\beta -3)^{2} (2\beta -3)}\\
\!\!\!\!\!\!\!\!\!&&~~~~~~~~~~~~~~~~~+\frac{6 v_6^{2} (8 \beta^{4}-24 \beta^{3} -18 \beta^{2}+27 \sqrt{\beta^{2} (-9+4 \beta)^{2}}+81 \beta) \beta}{8 \sqrt{\beta^{2} (-9+4 \beta)^{2}}\, (\beta -3)^{2} (2\beta -3)}\\
\!\!\!\!\!\!\!\!\!&&~~~~~~~~~~~~~~~~~-\frac{128 \theta_3  \,\beta^{3} (\beta -3)^{2} (2 \beta^{3}-8 \beta^{2}+\sqrt{\beta^{2} (-9+4 \beta)^{2}}+9 \beta) v_6 (2\beta -3)^{2}}{2\sqrt{\beta^{2} (-9+4 \beta)^{2}}\, (\sqrt{\beta^{2} (-9+4 \beta)^{2}}+3 \beta)^{2} (\sqrt{\beta^{2} (-9+4 \beta)^{2}}-3 \beta)^{2}}\\
\!\!\!\!\!\!\!\!\!&&~~~~~~~~~~~~~~~~~+\frac{27 \beta^{2} v_6^{3} (184 \beta^{4}-1848 \beta^{3}+6975 \beta^{2}-11664 \beta +7290)}{4 (-9+4 \beta)^{2} (\beta -3) (2 \beta -3)^{2} (10 \beta^{2}-42 \beta +45)}\\
\!\!\!\!\!\!\!\!\!&&~~~~~~~~~~~~~~~~~-\frac{27 \beta^{3} v_6^{3} (4 \beta -15) (11 \beta^{2}-48 \beta +54)}{4 \sqrt{\beta^{2} (-9+4 \beta)^{2}}\, (\beta -3) (2 \beta -3)^{2} (10 \beta^{2}-42 \beta +45)}\\
\!\!\!\!\!\!\!\!\!&&~~~~~~~~~~~~~~~~~-\frac{64 (16 \beta^{3}-96 \beta^{2}+261 \beta -243) \theta_3  \,\beta^{6} (\beta -3)^{2} v_6^{2} (2 \beta -3)}{(\sqrt{\beta^{2} (-9+4 \beta)^{2}}+3 \beta)^{3} (\sqrt{\beta^{2} (-9+4 \beta)^{2}}-3 \beta)^{3}}\\
\!\!\!\!\!\!\!\!\!&&~~~~~~~~~~~~~~~~~-\frac{1024 \beta^{11} (20 \beta^{2}-42 \beta -927) \delta  (\beta -3)^{2} v_6^{2} (2 \beta -3)}{(-9+4 \beta)^{2} \sqrt{\beta^{2} (-9+4 \beta)^{2}}\, (\sqrt{\beta^{2} (-9+4 \beta)^{2}}+3 \beta)^{3} (\sqrt{\beta^{2} (-9+4 \beta)^{2}}-3 \beta)^{3}}\\
\!\!\!\!\!\!\!\!\!&&~~~~~~~~~~~~~~~~~-\frac{1728 \beta^{7} (3584 \beta^{3}-9180 \beta^{2}+10989 \beta -5103) \delta  (\beta -3)^{2} v_6^{2} (2 \beta -3)}{(-9+4 \beta)^{2} \sqrt{\beta^{2} (-9+4 \beta)^{2}}\, (\sqrt{\beta^{2} (-9+4 \beta)^{2}}+3 \beta)^{3} (\sqrt{\beta^{2} (-9+4 \beta)^{2}}-3 \beta)^{3}}\\
\!\!\!\!\!\!\!\!\!&&~~~~~~~~~~~~~~~~~+\frac{4096 \theta_3^{2} \beta^{6} (\beta -3)^{6} v_6 (2\beta -3)^{3} (\beta -2)}{2\sqrt{\beta^{2} (-9+4 \beta)^{2}}\, (-\sqrt{\beta^{2} (-9+4 \beta)^{2}}+3 \beta)^{3} (\sqrt{\beta^{2} (-9+4 \beta)^{2}}+3 \beta)^{3} (-9+4 \beta)^{2}}\\
\!\!\!\!\!\!\!\!\!&&~~~~~~~~~~~~~~~~~+O(|(v_6,\theta_3)|^4).
\label{coe3}
\end{eqnarray*}
\end{small}
Let $\beta=2$, we get
$$
v_6\mapsto g_{5}(v_6):=-v_6+30 v_6^{2}-\frac{1}{2} v_6 \theta_3 -972 v_6^{3}+2 \theta_3  v_6^{2}+O(|(v_6,\theta_3)|^4).
$$
One can calculate that
\begin{eqnarray*}
\left.\left(\frac{\partial g_{5}}{\partial \theta_3}\frac{\partial^2g_{5}}{\partial u^2_2}+2\,\frac{\partial^2g_{5}}{\partial v_6 \partial\theta_3}\right)\right|_{(v_6,\theta_3)=(0,0)}=-1<0
\end{eqnarray*}
and
\begin{eqnarray*}
\left.\left(\frac{1}{2}\,\left(\frac{\partial^2g_{5}}{\partial v_6^2}\right)^2+\frac{1}{3}\,\frac{\partial^3g_{5}}{\partial v_6 ^3}\right)\right|_{(v_6,\theta_3)=(0,0)}=-144<0.
\end{eqnarray*}
Therefore, the non-degeneracy and transversality conditions of Theorem \ref{flipE2} are satisfied. Besides, similar to the proof of Theorem \ref{flipE1}, when the parameter $\Lambda$ crosses $\{\lambda>0,\mu=-3 \beta/(\beta-3),\beta=2\}$, system \eqref{eq2.1} undergoes a subcritical flip bifurcation and produces an unstable period-two orbit.
\end{proof}

\subsection{Neimark-Sacker bifurcation of $E_{2}$}
\allowdisplaybreaks[4]
\setcounter{equation}{0}
In this subsection, we consider the case that the eigenvalues of $JF(E_2)$ are a pair of conjugate complex numbers and lie on the unit disk, then discuss the Neimark-Sacker bifurcation of system \eqref{eq2.1} in a neighborhood of the fixed point $E_2$.
From \eqref{solu-1}, if $\Lambda$ belongs to case $\mathfrak{L}_{31}$ or $\mathfrak{L}_{32}$ (see Table~\ref{Table3}), i.e.,
\begin{eqnarray*}
\begin{array}{llll}
\mathfrak{L}:=\mathfrak{L}_{31}\cup \mathfrak{L}_{32}=\left\{\lambda>0, \mu=\frac{\beta}{\beta-2}, \beta>\frac{9}{4}\right\},
\end{array}
\end{eqnarray*}
the two eigenvalues of $JF(E_2)$ are
\begin{eqnarray*}
\begin{array}{llll}
\tilde{t}_{1}=\frac{2 \beta -5+{\bf{i}}\sqrt{4 \beta -9}}{2 \left(\beta -2\right)},
\tilde{t}_{2}=\frac{2 \beta -5-{\bf{i}}\sqrt{4 \beta -9}}{2 \left(\beta -2\right)},
\end{array}
\end{eqnarray*}
where ${\bf i}$ denotes the imaginary unit. As the parameter $\Lambda$ crosses $\mathfrak{L}$, system \eqref{eq2.1} may generate a Neimark-Sacker bifurcation.
In fact, we have the following result.
\begin{thm}
If the parameter $\Lambda\in \mathbb{R}_+^{3}$ crosses $\mathfrak{L}$ with
\begin{eqnarray*}
\beta\neq7/3,~\beta\neq5/2,~\beta\neq\frac{9}{2}+\frac{\sqrt{21}}{2},
\end{eqnarray*}
system \eqref{eq2.1} undergoes a Neimark-Sacker bifurcation near $E_2$. Specifically, if the first Lyapunov quantity $\mathfrak{A}>0$
(resp. $\mathfrak{A}<0$), where
\begin{eqnarray*}
\mathfrak{A}=\frac{\beta^{3} \left(\beta^{2}-9 \beta +15\right)}{16 \left(\beta -2\right)^{3} \left(4 \beta -9\right)},
\end{eqnarray*}
system \eqref{eq2.1} undergoes a subcritical (resp. supercritical) Neimark-Sacker bifurcation and generates a unique stable (resp. unstable) invariant circle surrounding $E_2$ from the region $\mathfrak{D}_{53}$ (resp. $\mathfrak{D}_{64}$) to $\mathfrak{D}_{64}$ (resp. $\mathfrak{D}_{53}$) and from the region $\mathfrak{D}_{63}$ (resp. $\mathfrak{D}_{71}$) to $\mathfrak{D}_{71}$ (resp. $\mathfrak{D}_{63}$).
\label{nsE2}
\end{thm}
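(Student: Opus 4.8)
The plan is to reduce $F$ to a planar map on the two-dimensional center manifold of $E_2$ and then apply the Neimark--Sacker criterion of \cite[Theorem~4.6]{Kuznetsov}. I take $\theta := \mu - \beta/(\beta-2)$ as the bifurcation parameter, holding $\beta$ and $\lambda$ fixed. The motivation is that the two non-real eigenvalues $t_{1,2}$ in \eqref{solu-1} are complex conjugates whose product equals the constant term $\mu(\beta-2)/\beta$ of $P_{E_2}$, so $|t_{1,2}|^2 = \mu(\beta-2)/\beta$ near $\mathfrak{L}$; this equals $1$ exactly when $\mu = \beta/(\beta-2)$. First I would translate $E_2$ to the origin and expand $F$ in a Taylor series. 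Since the remaining real eigenvalue $\lambda(\beta\mu-\beta-\mu)/(\beta\mu)$ stays off the unit circle throughout the four regions $\mathfrak{D}_{53},\mathfrak{D}_{64},\mathfrak{D}_{63},\mathfrak{D}_{71}$ bordering $\mathfrak{L}$, the map admits a center manifold tangent to the complex eigenspace, onto which I restrict $F$; the transverse direction merely contributes a hyperbolic stable or unstable factor.

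The three standard hypotheses are verified next. The eigenvalue condition $|\tilde{t}_{1,2}| = 1$ on $\mathfrak{L}$ is the product computation above. For transversality I differentiate $|t_{1,2}(\mu)|^2 = \mu(\beta-2)/\beta$, obtaining $(\beta-2)/\beta > 0$ at $\mu = \beta/(\beta-2)$ since $\beta > 9/4 > 2$, so the complex pair crosses the unit circle with nonzero speed. Writing $\tilde{t}_1 = e^{\mathbf{i}\varphi}$ with $\cos\varphi = (2\beta-5)/(2(\beta-2))$ read off from $\tilde t_1$, the nonresonance requirements $\tilde{t}_1^{\,k} \neq 1$ for $k = 1,2,3,4$ become $\cos\varphi \notin \{1, -1, -\tfrac{1}{2}, 0\}$. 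A short computation shows these are violated precisely at $\beta = 9/4$ (already excluded by $\beta > 9/4$), at $\beta = 7/3$ (the $1{:}3$ resonance, $\cos\varphi = -\tfrac{1}{2}$), and at $\beta = 5/2$ (the $1{:}4$ resonance, $\cos\varphi = 0$), which accounts for the two excluded values $\beta = 7/3$ and $\beta = 5/2$ in the hypotheses.

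The main obstacle is the computation of the first Lyapunov quantity. I would pass to the complex eigenbasis of $JF(E_2)$ on $\mathfrak{L}$ and write the center-manifold map as
\[
z \mapsto \tilde{t}_1 z + \sum_{2 \le k+l \le 3} \frac{1}{k!\,l!}\, g_{kl}\, z^k \bar{z}^l + O(|z|^4),
\]
extracting the quadratic coefficients $g_{20}, g_{11}, g_{02}$ and the resonant cubic coefficient $g_{21}$, the last of which requires the quadratic part of the center-manifold graph. Substituting into the normal-form formula
\[
a(0) = \operatorname{Re}\!\left(\frac{\bar{\tilde{t}}_1\, g_{21}}{2}\right) - \operatorname{Re}\!\left(\frac{(1 - 2\tilde{t}_1)\,\bar{\tilde{t}}_1^{\,2}}{2(1 - \tilde{t}_1)}\, g_{20} g_{11}\right) - \frac{1}{2}\,|g_{11}|^2 - \frac{1}{4}\,|g_{02}|^2,
\]
and simplifying the resulting expression in $\beta$ and $\sqrt{4\beta-9}$ (lengthy but mechanical, best handled with computer algebra), I expect the sign of $a(0)$ to coincide with that of $\mathfrak{A} = \beta^3(\beta^2 - 9\beta + 15)/\big(16(\beta-2)^3(4\beta-9)\big)$. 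Its numerator factor $\beta^2 - 9\beta + 15$ vanishes at $\beta = (9 \pm \sqrt{21})/2$, of which only $\beta = 9/2 + \sqrt{21}/2$ lies in $\beta > 9/4$; this is the third excluded value and corresponds to the degenerate (Chenciner) case where $a(0) = 0$ and the theorem does not apply.

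With the eigenvalue, transversality, nonresonance, and non-degeneracy conditions all established, the Neimark--Sacker theorem yields a unique closed invariant curve bifurcating from $E_2$ as $\theta$ changes sign, whose criticality and stability are governed by the sign of the computed Lyapunov coefficient, equivalently of $\mathfrak{A}$. Reading the adjacent regions off Table~\ref{Table3}, the invariant curve appears across the transitions $\mathfrak{D}_{53}\leftrightarrow\mathfrak{D}_{64}$ and $\mathfrak{D}_{63}\leftrightarrow\mathfrak{D}_{71}$, which gives the stated correspondence between the sign of $\mathfrak{A}$ and the sub- or supercritical type and completes the argument.
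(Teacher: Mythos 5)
Your proposal follows essentially the same route as the paper's proof: reduce to the two-dimensional center manifold (the transverse real eigenvalue being hyperbolic), then verify the three conditions of \cite[Theorem 4.6]{Kuznetsov} --- the modulus/transversality computation via $|t_{1,2}|^{2}=\mu(\beta-2)/\beta$, the nonresonance check that excludes $\beta=7/3$ and $\beta=5/2$ (with $\beta=9/4$ already excluded), and the non-degeneracy of the first Lyapunov coefficient, whose numerator factor $\beta^{2}-9\beta+15$ produces the third excluded value $\beta=\tfrac{9}{2}+\tfrac{\sqrt{21}}{2}$. The only cosmetic difference is that you invoke the standard closed-form formula for the Lyapunov coefficient in terms of $g_{20},g_{11},g_{02},g_{21}$, while the paper obtains the same quantity $\mathfrak{A}=\mathrm{Re}(\bar{\xi}_{0}p_{2,1})$ by carrying out the quadratic and cubic near-identity normal-form transformations explicitly.
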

\begin{proof}
In order to prove the conclusions, we need to verify the conditions (C.1), (C.2) and (C.3) stated in \cite[Theorem 4.6, p.141]{Kuznetsov}.
Denote
\begin{eqnarray*}
\mu_0:=\tilde{t}_{2}=\frac{2 \beta-5-{\bf{i}}\sqrt{4 \beta -9}}{2 \beta-4},
\end{eqnarray*}
we first check the non-degeneracy condition (C.2), i.e., $\mu_0^{i}\ne 1$ for $i=1,2,3,4$. Since $\mu_0$ is not a real number, the inequality $\mu_0^{i}\ne 1$ holds for $i=1,2$. Further, we declare that $\mu_0^{3}\neq1$. Otherwise, we get $\mu_0^2+\mu_0+1=0$, implying $$\mu_0=-\frac{1}{2}+\frac{\sqrt{3}}{2}\,{\bf{i}} ~~~\mbox{or}~~~ \mu_0=-\frac{1}{2}-\frac{\sqrt{3}}{2}\,{\bf{i}}, $$
and thus $\beta=7/3$, which contradicts to our assumption. Next, assume $\mu_0^{4}=1$, which is equivalent to $\mu_0^2+1=0$, implying the real part of $\mu_0$,
denoted as $\mathfrak{R}(\mu_0)$, is equal to $0$. Consequently, we have $\beta=5/2$, a contradiction to our assumption again. Hence, the non-degeneracy condition is satisfied.

On the other hand, if $\Lambda$ is near $\mathfrak{L}$, from \eqref{solu-1} we get
$$
|\tilde{t}_{2}|=\sqrt{\frac{\mu  \left(\beta -2\right)}{\beta}},
$$
and then
\begin{eqnarray*}
\left.\frac{d|\tilde{t}_{2}|}{d\,\mu}\right|_{\mu =\frac{\beta}{\beta-2}}=\frac{\beta -2}{2 \beta}>0,
\end{eqnarray*}
which implies that the transversality condition (C.1) holds.

Finally, we verify the non-degeneracy condition (C.3). Actually,
utilizing the transformation
$$(x,y,z)=\left(u_7+{\frac{1}{\beta}},v_7+{\frac{\beta  \mu -\beta -\mu}{\beta \mu}},w_7\right),$$
we translate $E_{2}$ to the origin $O$. Let
$$\mu =\frac{\beta}{\beta-2}$$
and employing the invertible linear transformation $(u_7,v_7,w_7)^T=\mathcal{H}(u_8,v_8,w_8)^T$, where
\begin{eqnarray*}
\mathcal{H}=\left[ \begin {array}{ccc}
1&0&-\frac{\beta  \left(2 \beta -\lambda \right)}{\beta^{3}-2 \beta^{2} \lambda +\beta  \,\lambda^{2}-2 \beta^{2}+5 \beta  \lambda -2 \lambda^{2}}\\
0&1&-\frac{\left(\beta^{2}-\beta  \lambda -4 \beta +2 \lambda\right) \beta}{\beta^{3}-2 \beta^{2} \lambda +\beta  \,\lambda^{2}-2 \beta^{2}+5 \beta  \lambda -2 \lambda^{2}}\\
0&0&\frac{\beta  \left(2 \beta -\lambda \right)}{\beta^{3}-2 \beta^{2} \lambda +\beta  \,\lambda^{2}-2 \beta^{2}+5 \beta  \lambda -2 \lambda^{2}}
\end {array}
 \right],
\end{eqnarray*}
system \eqref{eq2.1} is transformed into the following form
\begin{eqnarray}
\left[
\begin{array}{ccc}
   u_8  \\
   \noalign{\medskip}
   v_8\\
   \noalign{\medskip}
   w_8
\end{array}
\right]
\mapsto
\left[\begin{array}{cc}
\frac{u_8 \left(\beta -3\right)}{\beta -2}-\frac{v_8}{\beta -2}-\frac{u_8^{2} \beta}{\beta -2}-\frac{v_8 u_8 \beta}{\beta -2}-\frac{w_8 u_8 \left(\beta^{2}-\lambda  \beta -\beta +2 \lambda\right)}{\beta -2}\\
-\frac{w_8 v_8 \left(\lambda  \beta +\beta -2 \lambda\right)}{\beta -2}-\frac{w_8^{2} \left(\beta^{2} \lambda -\beta  \,\lambda^{2}+2 \beta^{2}-7 \lambda  \beta +3 \lambda^{2}\right)}{2 \beta -\lambda}\\
\noalign{\medskip}
  u_8+v_8+v_8 u_8 \beta +\frac{\beta  \left(\beta^{2}-\lambda  \beta -4 \beta +2 \lambda\right) u_8 w_8}{2 \beta -\lambda} -\frac{\beta  \left(\beta^{2}-\lambda  \beta -4 \beta +2 \lambda\right) v_8 w_8}{2 \beta -\lambda}\\
  -\frac{\beta  \left(\beta^{4}-2 \beta^{3} \lambda +\beta^{2} \lambda^{2}-8 \beta^{3}+12 \beta^{2} \lambda -4 \beta  \,\lambda^{2}+16 \beta^{2}-16 \lambda  \beta +4 \lambda^{2}\right) w_8^{2}}{\left(2 \beta -\lambda \right)^{2}} \\
\noalign{\medskip}
\frac{w_8 \lambda}{\beta}+v_8 w_8 \lambda +\frac{\lambda  \left(\beta^{2}-\lambda  \beta -4 \beta +2 \lambda\right) w_8^{2}}{2 \beta -\lambda}
 \end{array}\right].
\label{ns.1}
\end{eqnarray}
As done in the proof of Theorem \ref{th3.1}, mapping \eqref{ns.1} is restricted into a two dimensional $C^{2}$ center manifold
\begin{eqnarray*}
w_8=h_{5}(u_8,v_8)=O(|(u_8,v_8)|^3),
\end{eqnarray*}
and further converted into the following two dimensional form
\begin{eqnarray}
\left[
\begin{array}{ccc}
   u_8  \\
   \noalign{\medskip}
   v_8
\end{array}
\right]
\mapsto
\left[\begin{array}{cc}
\frac{u_8 \left(\beta -3\right)}{\beta -2}-\frac{v_8}{\beta -2}-\frac{u_8^{2} \beta}{\beta -2}-\frac{v_8 u_8 \beta}{\beta -2}\\
\noalign{\medskip}
  u_8+v_8+v_8 u_8 \beta
 \end{array}\right].
\label{ns.2}
\end{eqnarray}

Using the invertible linear transformation $(u_8,v_8)^T=H(\xi,\nu)^T$ again for mapping \eqref{ns.2}, here
\begin{eqnarray*}
H=\left[ \begin {array}{cc}
-\frac{1}{2 \left(\beta -2\right)}&\frac{\sqrt{4 \beta -9}}{2 \left(\beta -2\right)} \\
\noalign{\medskip}
1&0
\end {array} \right],
\end{eqnarray*}
we get
{\small\begin{eqnarray}
\left[
\begin{array}{cc}
   \xi_1\\
   \\
   \nu_1
\end{array}
\right]
\mapsto
\left[\begin{array}{cc}
a_{1,0}\xi_{{1}}+a_{0,1}\nu_{{1}}+a_{2,0}\xi_1^2+a_{1,1}\xi_{{1}}\nu_{{1}}+a_{0,2}\nu_1^2
\\
   \\
b_{1,0}\xi_{{1}}+b_{0,1}\nu_{{1}}+b_{2,0}\xi_1^2+b_{1,1}\xi_{{1}}\nu_{{1}}+b_{0,2}\nu_1^2
 \end{array}\right],
\label{ns.3}
\end{eqnarray}}
where
\begin{eqnarray*}
&&a_{1,0}=b_{0,1}=\frac{2 \beta -5}{2 \left(\beta -2\right)},~~a_{1,1}=\frac{\sqrt{4 \beta -9}\, \beta}{2 \left(\beta -2\right)},~~a_{2,0}=-\frac{\beta}{2 \left(\beta -2\right)},\\
&&b_{1,1}=-\frac{\beta  \left(\beta -4\right)}{2 \left(\beta -2\right)^{2}},~~b_{0,2}=\frac{\beta  \sqrt{4 \beta -9}\,}{2 \left(\beta -2\right)^{2}},~~a_{0,1}=-b_{1,0}=\frac{\sqrt{4 \beta -9}}{2 \left(\beta -2\right)},\\
&&b_{2,0}=\frac{\beta  \left(\beta -3\right)}{2 \sqrt{4 \beta -9}\, \left(\beta -2\right)^{2}},~~a_{0,2}=0.
\end{eqnarray*}

Let $z:=\xi+{\bf i}\nu$, mapping \eqref{ns.3} is converted into the following complex form
\begin{equation}
z\mapsto \xi_0z+\sum_{i=0}^2\gamma_{2-i,i}z^{2-i}\bar{z}^i,
\label{ns.4}
\end{equation}
where
\begin{eqnarray*}
\!\!\!\!&&\gamma_{2,0}=-\frac{\beta  \left(\beta -3\right)}{4 \left(\beta -2\right)^{2}}-\frac{{\bf{i}} \beta  \left(\beta -3\right) \left(\beta -\frac{5}{2}\right)}{2\sqrt{4 \beta -9}\, \left(\beta -2\right)^{2}},\\
\!\!\!\!&&\gamma_{1,1}=-\frac{\beta}{4 \beta-8}-\frac{3 \,\bf{i} \beta}{\sqrt{4 \beta -9}\, \left(4 \beta -8\right)},\\
\!\!\!\!&&\gamma_{0,2}=-\frac{\beta}{4 \left(\beta -2\right)^{2}}+\frac{{\bf{i}} \beta  \left(\beta^{2}-3 \beta +\frac{3}{2}\right)}{2\sqrt{4 \beta -9}\, \left(\beta -2\right)^{2}}.\\
\end{eqnarray*}
Applying the quadratic near-identity transformation
$$
z=\omega+q_{2,0}\omega^2+q_{1,1}\omega\bar{\omega}+q_{0,2}\bar{\omega}^2,
$$
where $q_{2,0}$, $q_{1,1}$ and $q_{0,2}$ are undetermined coefficients,
mapping \eqref{ns.4} becomes the form of
\begin{eqnarray}
\begin{array}{ll}
\omega\mapsto \xi_0\omega+(\gamma_{2,0}-(\xi_0^2-\xi_0)\,q_{2,0})\omega^2
+(\gamma_{1,1}-(\bar{\xi}_0\lambda_0-\xi_0)\,q_{1,1})\omega\bar{\omega}
\\~~~~~~
+(\gamma_{0,2}-(\bar{\xi}_0^2-\xi_0)\,q_{0,2})\bar{\omega}^2+O(|\omega|^3).
\label{ns.5}
\end{array}
\end{eqnarray}
Let
$$
q_{2,0}:=\frac{\gamma_{2,0}}{\xi_0^2-\xi_0},~~q_{1,1}:=\frac{\gamma_{1,1}}{\bar{\xi}_0\xi_0-\xi_0},
~~q_{0,2}:=\frac{\gamma_{0,2}}{\bar{\xi}_0^2-\xi_0},
$$
mapping \eqref{ns.5} can be simplified as
\begin{eqnarray}
&&\omega\mapsto \xi_0\omega+p_{3,0}\omega^3+p_{2,1}\omega^2\bar{\omega}+p_{1,2}\omega\bar{\omega}^2+p_{0,3}\bar{\omega}^3+O(|\omega|^4),
\label{ns.6}
\end{eqnarray}
where
\begin{eqnarray*}
&&\!\!\!\!\!\!\!p_{2,1}:=-\frac{\left(-9+{\bf{i}} \left(\beta -2\right) \sqrt{4 \beta -9}+4 \beta\right) \beta^{3}}{2 \left(3 \beta -7\right) \left(4 \beta -9\right) \left({\bf{i}} \sqrt{4 \beta -9}-2 \beta +5\right)}.\\
\end{eqnarray*}
Further, using the cubic near-identity transformation
$$
\omega=\vartheta+q_{3,0}\vartheta^3+q_{2,1}\vartheta^2\bar{\vartheta}+q_{1,2}\vartheta\bar{\vartheta}^2+q_{0,3}\bar{\vartheta}^3,
$$
where $q_{3,0}$, $q_{2,1}$, $q_{1,2}$ and $q_{0,3}$ are undetermined coefficients, mapping \eqref{ns.6} is reformulated as the form
\begin{eqnarray*}
&&\vartheta\mapsto\xi_0\vartheta+(p_{3,0}-(\xi_0^3-\xi)q_{3,0})\vartheta^3
+(p_{2,1}-(\xi_0|\xi_0|^2-\xi_0)q_{2,1})\vartheta^2\bar{\vartheta}
\\
&&~~~~~~+(p_{1,2}-(\bar{\xi}_0|\xi_0|^2-\xi_0)q_{1,2})\vartheta\bar{\vartheta}^2+
(p_{0,3}-(\bar{\xi}_0^3-\xi_0)q_{0,3})\bar{\vartheta}^3+O(|\vartheta|^4),
\end{eqnarray*}
which can be reduced as
\begin{equation}
\vartheta\mapsto\xi_0\vartheta+p_{2,1}\vartheta^2\bar{\vartheta}+O(|\vartheta|^4)
\label{ns.7}
\end{equation}
by selecting
$$
q_{3,0}:=\frac{p_{3,0}}{\xi_0^3-\xi_0},~~q_{2,1}:=0,~~q_{1,2}:=\frac{p_{1,2}}{\bar{\xi}_0|\xi_0|^2-\xi_0},
~~q_{0,3}:=\frac{p_{0,3}}{\bar{\xi}_0^3-\xi_0}.
$$

Finally, let $\xi_0:=e^{{\bf i}\varsigma}$, where
\begin{eqnarray*}
\varsigma=
\left\{
\begin{array}{l}
{\rm arctan}{\frac{\sqrt{4\beta-9}}{5-2\beta}},~~5-2\beta>0,
\\
\pi+{\rm arctan}{\frac{\sqrt{4\beta-9}}{5-2\beta}},~~5-2\beta<0
\end{array}
\right.
\end{eqnarray*}
and $\vartheta=\mathfrak{p}e^{{\bf i}\mathfrak{v}}$, mapping \eqref{ns.7} is changed to the form
$$
\vartheta=\mathfrak{p}e^{{\bf i}\mathfrak{v}}\mapsto\vartheta=\mathfrak{p}e^{{\bf i}(\mathfrak{v}+\varsigma)}
(1+\bar{\xi}_0p_{2,1}\mathfrak{p}^2)+O(\mathfrak{p}^4),
$$
which is equivalent to the mapping
\begin{eqnarray}
\left[
\begin{array}{cc}
  \mathfrak{p}  \\
   \mathfrak{v}
\end{array}
\right]
\mapsto
\left[\begin{array}{cc}
\mathfrak{p}|1+\bar{\xi}_0p_{2,1}\mathfrak{p}^2|+O(\mathfrak{p}^4)
\\
\mathfrak{v}+\varsigma+{\rm arg}(1+\bar{\xi}_0p_{2,1}\mathfrak{p}^2)+O(\mathfrak{p}^4)
 \end{array}\right].
\label{ns.8}
\end{eqnarray}
Note that
\begin{eqnarray*}
&&|1+\bar{\xi}_0p_{2,1}\mathfrak{p}^2|=((1+{\rm Re}(\bar{\xi}_0p_{2,1})\mathfrak{p}^2)^2+({\rm Im}(\bar{\xi}_0p_{2,1})\mathfrak{p}^2)^2)^{1/2}
\\
&&~~~~~~~~~~~~~~~~~\,=1+{\rm Re}(\bar{\xi}_0p_{2,1})\mathfrak{p}^2+O(\mathfrak{p}^4)
\end{eqnarray*}
and
$$
{\rm arg}(1+\bar{\xi}_0p_{2,1}\mathfrak{p}^2)={\rm arctan}\left(\frac{{\rm Im}(\bar{\xi}_0p_{2,1})\mathfrak{p}^2}{1+{\rm Re}(\bar{\xi}_0p_{2,1})\mathfrak{p}^2}\right)={\rm Im}(\bar{\xi}_0p_{2,1})\mathfrak{p}^2+O(\mathfrak{p}^4)
$$
for sufficiently small $\mathfrak{p}$, so mapping \eqref{ns.8} can be expressed as
\begin{eqnarray*}
\left\{
\begin{array}{ll}
\tilde{\mathfrak{p}}=\mathfrak{p}+\mathfrak{A}\,\mathfrak{p}^3+O(\mathfrak{p}^4),
\\
\tilde{\mathfrak{v}}= \mathfrak{v}+\varsigma+{\rm Im}(\bar{\xi}_0p_{2,1})\mathfrak{p}^2+O(\mathfrak{p}^4),
\end{array}
\right.
\end{eqnarray*}
where
\begin{eqnarray*}
&&\mathfrak{A}:={\rm Re}(\bar{\xi}_0p_{2,1})=\frac{\beta^{3} \left(\beta^{2}-9 \beta +15\right)}{16 \left(\beta -2\right)^{3} \left(4 \beta -9\right)}.
\end{eqnarray*}
It is clear that $\mathfrak{A}\neq0$ as $\beta\neq\frac{9}{2}+\frac{\sqrt{21}}{2}$, implying the condition (C.3) in \cite[p.141]{Kuznetsov} is satisfied.

Therefore, if $(\lambda, \mu, \beta)\in \mathfrak{L}$ with
\begin{eqnarray*}
\beta\neq7/3,~\beta\neq5/2,~\beta\neq\frac{9}{2}+\frac{\sqrt{21}}{2}
\end{eqnarray*}
hold, all conditions in \cite[Theorem 4.6, p.141]{Kuznetsov} are fulfilled, which follows that system \eqref{eq2.1} undergoes a Neimark-Sacker bifurcation.
To be more specific, if $\mathfrak{A}>0$ (or $\mathfrak{A}<0$), system \eqref{eq2.1} undergoes a subcritical (or supercritical) Neimark-Sacker bifurcation and generates a unique stable (or unstable) invariant circle surrounding $E_2$ from the region $\mathfrak{D}_{53}$ (resp. $\mathfrak{D}_{64}$) to $\mathfrak{D}_{64}$ (resp. $\mathfrak{D}_{53}$) and from the region $\mathfrak{D}_{63}$ (resp. $\mathfrak{D}_{71}$) to $\mathfrak{D}_{71}$ (resp. $\mathfrak{D}_{63}$).  This completes the proof.
\end{proof}

Theorem \ref{nsE2} demonstrates that when $\mathfrak{A}>0$ (or $\mathfrak{A}<0$), system \eqref{eq2.1} undergoes a subcritical (or supercritical) Neimark-Sacker bifurcation and produces a unique stable (or unstable) invariant circle near the non-hyperbolic condition $\left\{\lambda>0,\mu=\frac{\beta}{\beta-2},\beta>\frac{9}{4}, \beta\neq7/3,~\beta\neq5/2,~\beta\neq\frac{9}{2}+\frac{\sqrt{21}}{2}\right\}$. From a biological perspective, if the initial population densities of prey, predator and top predator are located inside (or outside) the invariant circle, their densities will oscillate near the invariant circle (or trend to infinity). Consequently, the population densities of prey, predator and top predator can coexist within the invariant circle (or all trend to infinity).

\section{Codimension-2 bifurcations}
In this section, we consider the codimension-2 bifurcations at $E_2$.
\setcounter{equation}{0}
\subsection{$1:2$ resonance at $E_2$}
From Section 2, we find that when the parameter $\Lambda$ crosses $\mathfrak{L}_{24}=\{(\lambda,\mu,\beta)|\lambda>0,\mu=9,\beta=9/4\}$ in Table \ref{Table3}, the eigenvalues of $J(E_2)$ are $t_{1}=t_{2}=-1$. Consequently, system \eqref{eq2.1} may undergo a 1:2 strong resonance.
\begin{thm}
System \eqref{eq2.1} undergoes a $1:2$ resonance bifurcation as the parameter $\Lambda\in \mathbb{R}_+^{3}$ crosses $\mathfrak{L}_{24}$. Specifically, in a sufficiently small neighborhood of the 1:2 resonance region $\mathfrak{L}_{24}$, the following bifurcation phenomena occur:
\begin{description}
\item[${\rm (i)}$] If the parameter $\Lambda$ crosses
            \begin{eqnarray*}
            &&\!\!\!\!\!\!\!\!\!\!\!\!\mathfrak{L}^{'}_{2}:=\left\{(\lambda,\mu,\beta)\bigg|\lambda>0,
            \mu =-\frac{3 \beta}{\beta-3}+O\left(\left|\left(\beta-\frac{9}{4}\right)\right|^2\right),
            \frac{3}{2}<\beta<3\right\}
            \end{eqnarray*}
            from the region $\mu <-3 \beta/(\beta-3)+O(|(\beta-9/4)|^2)$ to $\mu >-3 \beta/(\beta-3)+O(|(\beta-9/4)|^2)$, then system \eqref{eq2.1} undergoes a subcritical flip bifurcation and produces an unstable period-two orbit $\{Q_{11},Q_{12}\}$. Additionally, $E_2$ is from unstable node to saddle point.
\item[${\rm (ii)}$] As the parameter $\Lambda$ crosses
            \begin{eqnarray*}
            &&\mathfrak{L}^{'}_{3}:=\left\{(\lambda,\mu,\beta)\bigg|\lambda>0,\mu =\frac{\beta}{\beta -2}+O\left(\left|\left(\beta-\frac{9}{4}\right)\right|^2\right),
            \beta>\frac{9}{4}\right\}
            \end{eqnarray*}
            from the region $\mu <\beta(\beta -2)+O(|(\beta-9/4)|^2)$ to $\mu >\beta/(\beta -2)+O(|(\beta-9/4)|^2)$, system \eqref{eq2.1} undergoes a Neimark-Sacker bifurcation and generates a unique invariant circle $\Gamma$. Moreover, $\Gamma$ coexists with the unstable period-two orbit $\{Q_{11},Q_{12}\}$ and $E_2$ becomes a saddle-focus simultaneously.
\item[${\rm (iii)}$] When the parameter $\Lambda$ crosses
            \begin{eqnarray*}
            H_{2r}:=\left\{(\lambda,\mu,\beta)\bigg|\lambda>0,\mu =-\frac{41 \beta}{7 \beta -26}+O\left(\left|\left(\beta-\frac{9}{4}\right)\right|^2\right),\beta>\frac{9}{4}\right\}
            \end{eqnarray*}
            from the region $\mu <-41 \beta/(7 \beta -26)+O(|(\beta-9/4)|^2)$ to $\mu >-41 \beta/(7 \beta -26)+O(|(\beta-9/4)|^2)$, the invariant circle disappears.

\end{description}
\label{th1-2}
\end{thm}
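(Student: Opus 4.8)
The plan is to recognize $\mathfrak{L}_{24}$ as a genuine $1{:}2$ strong resonance (R2) point and to apply the universal unfolding of this codimension-two bifurcation from the normal form theory for maps (\cite{Kuznetsov}). Concretely, I would (a) confirm the spectral picture and the Jordan structure of $JF(E_2)$ at $\mathfrak{L}_{24}$, (b) reduce the map to its two-dimensional center manifold, (c) bring the critical linear part to the nilpotent Jordan form and compute the cubic normal-form coefficients, (d) verify the non-degeneracy and transversality conditions, and (e) read off the three bifurcation curves of the unfolding and translate them back into the original parameters, matching them to $\mathfrak{L}'_2$, $\mathfrak{L}'_3$ and $H_{2r}$.

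First I would verify the linearization. At $\beta=9/4$, $\mu=9$ the quadratic factor $P_{E_2}(t)$ collapses to $-(t+1)^2$, so the eigenvalues of $JF(E_2)$ are $-1,-1$ together with the real eigenvalue $4\lambda/9$. The $(x,y)$-block of $JF(E_2)$ equals $\left(\begin{smallmatrix} -3 & -4 \\ 1 & 1 \end{smallmatrix}\right)$, whose eigenspace for $-1$ is one-dimensional; hence the double eigenvalue $-1$ is non-semisimple and carries a genuine $2\times2$ Jordan block. Provided $\lambda\neq 9/4$ (so that $4\lambda/9\neq\pm 1$), the transverse eigenvalue lies off the unit circle and the essential dynamics lives on a two-dimensional $C^2$ center manifold, obtained exactly as in the proof of Theorem \ref{th3.1} after translating $E_2$ to the origin. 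I would take $(\mu-9,\ \beta-9/4)$ as the two unfolding parameters (with $\lambda$ a spectator), so that the reduced map on the center manifold has linear part $\left(\begin{smallmatrix}-1 & 1 \\ 0 & -1\end{smallmatrix}\right)+O(|\mathrm{par}|)$ together with quadratic and cubic terms.

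Next I would transform the reduced planar map to the standard $1{:}2$ resonance normal form of the approximating flow, $\dot\xi_1=\xi_2$, $\dot\xi_2=\beta_1\xi_1+\beta_2\xi_2+C(0)\,\xi_1^3+D(0)\,\xi_1^2\xi_2+\cdots$, following the reduction in \cite{Kuznetsov}. The universal unfolding then supplies three bifurcation curves emanating from the origin of the $(\beta_1,\beta_2)$-plane: a pitchfork of the amplitude system, which for the map is the flip bifurcation creating the period-two orbit $\{Q_{11},Q_{12}\}$; a Hopf line of the amplitude system, corresponding to the Neimark--Sacker bifurcation that spawns the invariant circle $\Gamma$; and a heteroclinic (saddle-connection) curve along which the limit cycle of the amplitude system---hence the invariant circle $\Gamma$---is destroyed. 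Matching these with the stability data of Table \ref{Table3} yields exactly the transitions asserted in (i)--(iii): the flip curve is $\mathfrak{L}'_2$ (the perturbed $\mu=-3\beta/(\beta-3)$), across which $E_2$ passes from unstable node to saddle; the Hopf/Neimark--Sacker curve is $\mathfrak{L}'_3$ (the perturbed $\mu=\beta/(\beta-2)$), across which $E_2$ becomes a saddle-focus and $\Gamma$ coexists with $\{Q_{11},Q_{12}\}$; and the saddle-connection curve is $H_{2r}$, along which $\Gamma$ disappears.

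The main obstacle is twofold. First, the explicit evaluation of the cubic coefficients $C(0)$ and $D(0)$: these require the quadratic part of the center-manifold expansion and two successive near-identity transformations, a computation as heavy as---but structurally identical to---the one carried out for Theorem \ref{nsE2}; the non-degeneracy conditions $C(0)\neq 0$, $D(0)\neq 0$ and the requirement that the parameter map $(\mu,\beta)\mapsto(\beta_1,\beta_2)$ be a local diffeomorphism must then be checked from these formulas. Second, and more delicate, is pinning down the heteroclinic curve $H_{2r}$: its leading coefficient $-41\beta/(7\beta-26)$ is \emph{not} visible from the linear data but comes from the Pontryagin/Mel'nikov analysis of the rescaled amplitude system, in which the ratio $C(0)/D(0)$ fixes the asymptotic slope $\beta_2=c\,\beta_1+o(\beta_1)$ of the saddle connection (\cite{Kuznetsov}). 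Substituting this slope into the inverse of the parameter map and expanding to leading order in $\beta-9/4$ produces the stated $O(|(\beta-9/4)|^2)$-accurate expression for $H_{2r}$; verifying that the signs of $C(0)$ and $D(0)$ select precisely the unfolding case producing the asserted sequence---subcritical flip, then Neimark--Sacker, then annihilation of $\Gamma$---is the crux of the argument.
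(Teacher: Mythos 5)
Your proposal takes essentially the same route as the paper's proof: translate $E_2$ to the origin with unfolding parameters $(\mu-9,\beta-9/4)$, reduce to the two-dimensional center manifold, bring the linear part to the non-semisimple form, verify the transversality of the parameter map and the non-degeneracy conditions $C(0)\neq0$, $D(0)+3C(0)\neq0$ via Kuznetsov's Lemma 9.9 and Theorem 9.3, and then read off the flip, Neimark--Sacker, and heteroclinic curves of the approximating amplitude system (whose slope $\delta_2=-\delta_1/5+o(\delta_1)$ yields $H_{2r}$) and match them to $\mathfrak{L}'_2$, $\mathfrak{L}'_3$, $H_{2r}$. The only substantive difference is that you explicitly require $\lambda\neq 9/4$ so that the third eigenvalue $4\lambda/9$ stays off the unit circle --- a hypothesis the paper's statement and proof leave implicit --- which is a small but genuine point of added care rather than a different method.
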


\begin{proof}
Employing the transformation
$$(x,y,z)=\left(m_1+\frac{1}{\beta},n_1+\frac{\beta  \mu -\beta -\mu}{\beta \mu},l_1\right),$$
$E_2$ is translated to the origin $O$.
Let
\begin{eqnarray*}
\alpha_1:=\mu-9~~\mbox{and}~~\alpha_2:=\beta-\frac{9}{4},
\label{1.2.1}
\end{eqnarray*}
system \eqref{eq2.1} can be transformed into the following form
\begin{eqnarray}
\left(
\begin{array}{l}
m_1
\\
n_1
\\
l_1
\end{array}
\right)
\mapsto
\left(
\begin{array}{l}
-\frac{\left(4 \alpha_1-4 \alpha_2+27\right) m_1}{4 \alpha_2+9}-\frac{4 \left(\alpha_1+9\right) n_1}{4 \alpha_2+9}-\frac{4 \left(\alpha_1+9\right) l_1}{4 \alpha_2+9}\\
-\left(\alpha_1+9\right) m_1^{2}-\left(\alpha_1+9\right) n_1 m_1-\left(\alpha_1+9\right) l_1 m_1
\\
\frac{m_1 \left(4 \alpha_1 \alpha_2+5 \alpha_1+32 \alpha_2+36\right)}{4 \left(\alpha_1+9\right)}+n_1-\frac{l_1 \left(4 \alpha_1 \alpha_2+5 \alpha_1+32 \alpha_2+36\right)}{4 \left(\alpha_1+9\right)}\\
+\frac{\left(4 \alpha_2+9\right) n_1 m_1}{4}-\frac{\left(4 \alpha_2+9\right) n_1 l_1}{4}
\\
\frac{\lambda  \left(4 \alpha_1 \alpha_2+5 \alpha_1+32 \alpha_2+36\right) l_1}{\left(4 \alpha_2+9\right) \left(\alpha_1+9\right)}+\lambda  n_1 l_1\\
\end{array}
\right).
\label{1.2.2}
\end{eqnarray}
According to the proof of Theorem \ref{th3.1}, mapping \eqref{1.2.2} is restricted to a two dimensional $C^{2}$ center manifold
\begin{eqnarray*}
l_1=h_{6}(m_1,n_1)=O(|(m_1,n_1)|^3),
\end{eqnarray*}
and then \eqref{1.2.2} is converted into the following two dimensional form
\begin{eqnarray}
\left[
\begin{array}{ccc}
   m_1  \\
   \noalign{\medskip}
   n_1
\end{array}
\right]
\mapsto
\left[\begin{array}{cc}
-\frac{\left(27-4 \alpha_2+4 \alpha_1\right) m_1}{4 \alpha_2+9}-\frac{4 \left(\alpha_1+9\right) n_1}{4 \alpha_2+9}-m_1^{2} \left(\alpha_1+9\right)-m_1\left(\alpha_1+9\right) n_1\\
\noalign{\medskip}
 \frac{m_1 \left(4 \alpha_1 \alpha_2+5 \alpha_1+32 \alpha_2+36\right)}{4 \left(\alpha_1+9\right)}+n_1+\frac{\left(4 \alpha_2+9\right) n_1 m_1}{4}
 \end{array}\right].
\label{1.2.3}
\end{eqnarray}

Applying the subsequent transformation
\begin{eqnarray*}
\left(
\begin{array}{cc}
m_1 \\
n_1
\end{array}
\right)
\rightarrow
\left(
\begin{array}{cc}
-2&1\\
~1&0
\end{array}
\right)
\left(
\begin{array}{cc}
\zeta_1
\\
\eta_1
\end{array}
\right)
\end{eqnarray*}
to mapping \eqref{1.2.3}, we obtain
\begin{eqnarray}
\begin{aligned}
\left(
\begin{array}{cc}
\zeta_1 \\
\eta_1
\end{array}
\right)
\rightarrow
\left(
\begin{array}{ccc}-1+a_{10}&1+a_{01}\\b_{10}&-1+b_{01}\end{array}
\right)
\left(
\begin{array}{cc}
\zeta_1 \\
\eta_1
\end{array}
\right)
+\left(
\begin{array}{cc}
g(\zeta_1,\eta_1,\alpha_1,\alpha_2)\\
h(\zeta_1,\eta_1,\alpha_1,\alpha_2)
\end{array}
\right),
\end{aligned}
\label{1.2.4}
\end{eqnarray}
where
\begin{eqnarray*}
&&g(\zeta_1,\eta_1,\alpha_1,\alpha_2)=-\frac{\left(4 \alpha_2+9\right) \zeta_1^{2}}{2}+\frac{\left(4 \alpha_2+9\right) \eta_1 \zeta_1}{4},\\
&&h(\zeta_1,\eta_1,\alpha_1,\alpha_2)=-\left(2 \alpha_1+27+4 \alpha_2\right) \zeta_1^{2}+\frac{\left(6 \alpha_1+63+4 \alpha_2\right) \eta_1 \zeta_1}{2}-\eta_1^{2} \left(\alpha_1+9\right).
\end{eqnarray*}
In order to simplify the linear part of \eqref{1.2.4}, we utilize the transformation
\begin{eqnarray*}
\left(
\begin{array}{cc}
\zeta_1 \\
\eta_1
\end{array}
\right)
\rightarrow
\left(
\begin{array}{cc}
1+a_{01}&0\\
-a_{10}&1
\end{array}
\right)
\left(
\begin{array}{cc}
\zeta_2
\\
\eta_2
\end{array}
\right),
\end{eqnarray*}
and then mapping \eqref{1.2.4} is changed into the following mapping
\begin{eqnarray}
\left(
\begin{array}{cc}
\zeta_2 \\
\eta_2
\end{array}
\right)
\mapsto
\left(
\begin{array}{cc}
-\zeta_2+\eta_2+g_\ast(\zeta_2,\eta_2,\alpha_1,\alpha_2)\\
c_{10}\zeta_2+(-1+c_{20})\eta_2+h_\ast(\zeta_2,\eta_2,\alpha_1,\alpha_2)
\end{array}
\right),
\label{1.2.5}
\end{eqnarray}
where
{\small
\begin{eqnarray*}
\!\!\!\!\!\!\!\!\!\!&&c_{10}=\frac{\left(-4 \alpha_1-48\right) \alpha_2+3 \alpha_1}{4 \alpha_2+9},\\
\!\!\!\!\!\!\!\!\!\!&&c_{20}=\frac{-4 \alpha_1+16 \alpha_2}{4 \alpha_2+9},\\
\!\!\!\!\!\!\!\!\!\!&&g_\ast(\zeta_2,\eta_2,\alpha_1,\alpha_2)=\frac{\left(4 \alpha_2+9\right) \eta_2 \zeta_2}{4}-\frac{\left(4 \alpha_2+9\right) \zeta_2^{2}}{2},\\
\!\!\!\!\!\!\!\!\!\!&&h_\ast(\zeta_2,\eta_2,\alpha_1,\alpha_2)=\frac{\zeta_2^{2} \left(8 \alpha_1^{2} \alpha_2-16 \alpha_2^{2} \alpha_1-6 \alpha_1^{2}+80 \alpha_1 \alpha_2-128 \alpha_2^{2}-171 \alpha_1+144 \alpha_2-972\right)}{4 \left(\alpha_1+9\right)}
\\
\!\!\!\!\!\!\!\!\!\!&&~~~~~~~~~~~~~~~~~~~~~-\frac{\eta_2 \zeta_2 \left(8 \alpha_1^{2} \alpha_2-16 \alpha_2^{2} \alpha_1-22 \alpha_1^{2}+80 \alpha_1 \alpha_2-128 \alpha_2^{2}-459 \alpha_1+144 \alpha_2-2268\right)}{8 \left(\alpha_1+9\right)}\\
\!\!\!\!\!\!\!\!\!\!&&~~~~~~~~~~~~~~~~~~~~~-\left(\alpha_1+9\right) \eta_2^{2}.
\end{eqnarray*}}Let
\begin{eqnarray*}
\epsilon_1(\alpha_1,\alpha_2):=c_{10},~~~~~\epsilon_2(\alpha_1,\alpha_2):=c_{20}.
\label{eq6.003}
\end{eqnarray*}
One can check that the mapping
\begin{eqnarray}
(\alpha_1,\alpha_2)\mapsto(\epsilon_1(\alpha_1,\alpha_2),\epsilon_2(\alpha_1,\alpha_2))
\label{1.2.6}
\end{eqnarray}
is regular at $(\alpha_1,\alpha_2)=(0,0)$ because
\begin{eqnarray*}
\det
\left.
\left(
\begin{array}{ll}
\frac{\partial\epsilon_1}{\partial\alpha_1}&\frac{\partial\epsilon_1}{\partial\alpha_2}\\
\frac{\partial\epsilon_2}{\partial\alpha_1}&\frac{\partial\epsilon_2}{\partial\alpha_2}
\end{array}
\right)
\right|_{(\alpha_1,\alpha_2)=(0,0)}
=\det
\left(
\begin{array}{ll}
~~\frac{1}{3}&-\frac{16}{3}\\
-\frac{4}{9}&~~\frac{16}{9}
\end{array}
\right)
=-\frac{16}{9}\neq0.
\end{eqnarray*}
Thus the transversality condition of $1:2$ resonance shown in \cite[p.443]{Kuznetsov} is fulfilled.

Next, we continue to verify the nondegeneracy conditions. Let $\epsilon:=(\epsilon_1,\epsilon_2)$ be a new parameter, mapping \eqref{1.2.5} can be converted into the following mapping
\begin{eqnarray}
\left(
\begin{array}{l}
\zeta_2 \\
\eta_2
\end{array}
\right)
\mapsto
\left(
\begin{array}{l}
-\zeta_2+\eta_2+g_{20}\zeta^2_2+g_{11}\zeta_2\eta_2+g_{02}\eta^2_2+g_{30}\zeta^3_2+g_{21}\zeta^2_2\eta_2\\
+g_{12}\zeta_2\eta^2_2+g_{03}\eta^3_2\\
\epsilon_1\zeta_2+(-1+\epsilon_2)\eta_2+h_{20}\zeta^2_2+h_{11}\zeta_2\eta_2+h_{02}\eta^2_2+h_{30}\zeta^3_2\\
+h_{21}\zeta^2_2\eta_2+h_{12}\zeta_2\eta^2_2+h_{03}\eta^3_2\\
\end{array}
\right),
\label{1.2.7}
\end{eqnarray}
where
\begin{eqnarray*}
&&g_{20}=-\frac{9}{2}+\frac{\epsilon_{1}}{2}+\frac{3 \epsilon_{2}}{8}+O(|(\epsilon_{1},\epsilon_{2})|^2),~~
g_{11}=\frac{9}{4}-\frac{\epsilon_{1}}{4}-\frac{3 \epsilon_{2}}{16}+O(|(\epsilon_{1},\epsilon_{2})|^2),\\
&&g_{02}=O(|(\epsilon_{1},\epsilon_{2})|^2),g_{30}=O(|(\epsilon_{1},\epsilon_{2})|^2),~~
g_{12}=O(|(\epsilon_{1},\epsilon_{2})|^2),\\
&&g_{21}=O(|(\epsilon_{1},\epsilon_{2})|^2),g_{03}=O(|(\epsilon_{1},\epsilon_{2})|^2),\\
&&h_{20}=-27+\frac{9 \epsilon_{2}}{2}+\frac{3 \epsilon_{1}}{4}+O(|(\epsilon_{1},\epsilon_{2})|^2),~
h_{11}=\frac{63}{2}-\frac{33 \epsilon_{2}}{4}-\frac{19 \epsilon_{1}}{8}+O(|(\epsilon_{1},\epsilon_{2})|^2),\\
&&h_{02}=3 \epsilon_{2}-9+\epsilon_{1}+O(|(\epsilon_{1},\epsilon_{2})|^2),~~h_{30}=O(|(\epsilon_{1},\epsilon_{2})|^2),~
h_{12}=O(|(\epsilon_{1},\epsilon_{2})|^2),\\
&&h_{21}=O(|(\epsilon_{1},\epsilon_{2})|^2),~~h_{03}=O(|(\epsilon_{1},\epsilon_{2})|^2).
\end{eqnarray*}
According to \cite[Lemma 9.9, p. 437]{Kuznetsov}, there exists a near-identity change of coordinates which transforms system \eqref{1.2.7} into a $C^1$ mapping $\Gamma_{\epsilon}:\mathbb{R}^2\rightarrow\mathbb{R}^2$, given by
\begin{eqnarray*}
\begin{aligned}
\left(
\begin{array}{l}
\zeta_2 \\
\eta_2
\end{array}
\right)
\rightarrow
\left(
\begin{array}{ccc}-1&1\\ \epsilon_1&-1+\epsilon_2\end{array}
\right)
\left(
\begin{array}{l}
\zeta_2 \\
\eta_2
\end{array}
\right)
+\left(
\begin{array}{l}
~~~~~~~~~~~0\\
C(\epsilon)\zeta_2^3+D(\epsilon)\zeta_2^2\eta_3
\end{array}
\right)
+O(|(\zeta_2,\eta_2)|^4),
\end{aligned}
\end{eqnarray*}
where
\begin{eqnarray*}
&&C(\epsilon)=\frac{243}{4}+\frac{243}{8} \epsilon_{2}+\frac{27}{4} \epsilon_{1}+O(|\epsilon|^2),\\
&&D(\epsilon)=\frac{2187}{4}-\frac{1539}{8} \epsilon_{2}-\frac{243}{4} \epsilon_{1}+O(|\epsilon|^2).
\end{eqnarray*}
Consequently, $C(0)=243/4$ and $D(0)+3C(0)=729$, indicating that the non-degeneracy conditions $(R2.1)$ and $(R2.2)$ of $1:2$ resonance shown in \cite[p.443]{Kuznetsov} are satisfied. So system \eqref{eq2.1} generates a $1:2$ resonance.

Finally, from \cite[Theorem 9.3, p.442]{Kuznetsov}, we see that the second iterate of the smooth mapping $\Gamma_{\epsilon}$ can be represented as
$$(\zeta_2,\eta_2)^T\mapsto\varphi(1,\zeta_2,\eta_2,\epsilon_1,\epsilon_2)+O(|(\zeta_2,\eta_2)|^4)$$
for sufficiently small $|\epsilon|=|(\epsilon_1, \epsilon_2)|$, where $\varphi(1,\zeta_2,\eta_2,\epsilon_1,\epsilon_2)$ is the time-1 mapping of a planar flow $\varphi(t,\zeta_2,\eta_2,\epsilon_1,\epsilon_2)$, which is smoothly equivalent to the following system
\begin{eqnarray}
\begin{aligned}
\left(
\begin{array}{l}
\dot{\zeta}_3 \\
\dot{\eta}_3
\end{array}
\right)
\rightarrow
\left(
\begin{array}{cc}0&1\\ \gamma_1(\epsilon)&\gamma_2(\epsilon)\end{array}
\right)
\left(
\begin{array}{l}
\zeta_3 \\
\eta_3
\end{array}
\right)\!\!
+\!\!\left(
\begin{array}{l}
~~~~~~~~~~~0\\
C_1(\epsilon)\zeta_3^3+D_1(\epsilon)\zeta_3^2\eta_3
\end{array}
\right),
\end{aligned}
\label{1.2.8}
\end{eqnarray}
where
\begin{eqnarray*}
&&\gamma_1(\epsilon):=4\epsilon_1+O(|\epsilon|^2),~~~~~~\gamma_2(\epsilon):=-2\epsilon_1-2\epsilon_2+O(|\epsilon|^2),\\
&&C_1(\epsilon):=243+\frac{243 \epsilon_{2}}{2}+27 \epsilon_{1}+O(|\epsilon|^2),\\
&&D_1(\epsilon):=-1458+\frac{405 \epsilon_{2}}{2}+81 \epsilon_{1}+O(|\epsilon|^2).
\end{eqnarray*}
Employing the scaling transformation
$$(\zeta_3,\eta_3)=\left(-\frac{\sqrt{C_1(\epsilon)}}{D_1(\epsilon)}\zeta_4,
\frac{(C_1(\epsilon))^{\frac{3}{2}}}{(D_1(\epsilon))^2}\eta_4\right),
~~~~~~dt=-\frac{D_1(\epsilon)}{C_1(\epsilon)}d\tau,$$
mapping \eqref{1.2.8} is rewritten as
\begin{eqnarray}
\begin{aligned}
\left(
\begin{array}{l}
\dot{\zeta}_4 \\
\dot{\eta}_4
\end{array}
\right)
\rightarrow
\left(
\begin{array}{cc}0&1\\ \delta_1(\epsilon)&\delta_2(\epsilon)\end{array}
\right)
\left(
\begin{array}{l}
\zeta_4 \\
\eta_4
\end{array}
\right)
+\left(
\begin{array}{l}
~~~~0\\
\zeta_4^3-\zeta_4^2\eta_4
\end{array}
\right),
\end{aligned}
\label{1.2.9}
\end{eqnarray}
where
\begin{eqnarray}
\begin{array}{l}
\delta_1(\epsilon):=\frac{(D_1(\epsilon))^2}{(C_1(\epsilon))^2}\gamma_1(\epsilon)=144\,\epsilon_1+O(|\epsilon|^2),\\
\delta_2(\epsilon):=-\frac{D_1(\epsilon)}{C_1(\epsilon)}\gamma_2(\epsilon)=-12\,\epsilon_1-12\,\epsilon_2+O(|\epsilon|^2).
\end{array}
\label{1.2.10}
\end{eqnarray}
Let $\delta:=(\delta_1,\delta_2)$ as a new parameter, system \eqref{1.2.9} has the following bifurcations (the proof of which can be found in \cite[p.443-447]{Kuznetsov} around $(\zeta_4,\eta_4)=(0,0)$ for sufficiently small $|\delta|$), as shown in Figure \ref{resonance1bi2-2}.
\begin{figure}[htbp]
\centering
{
\includegraphics[width=10cm]{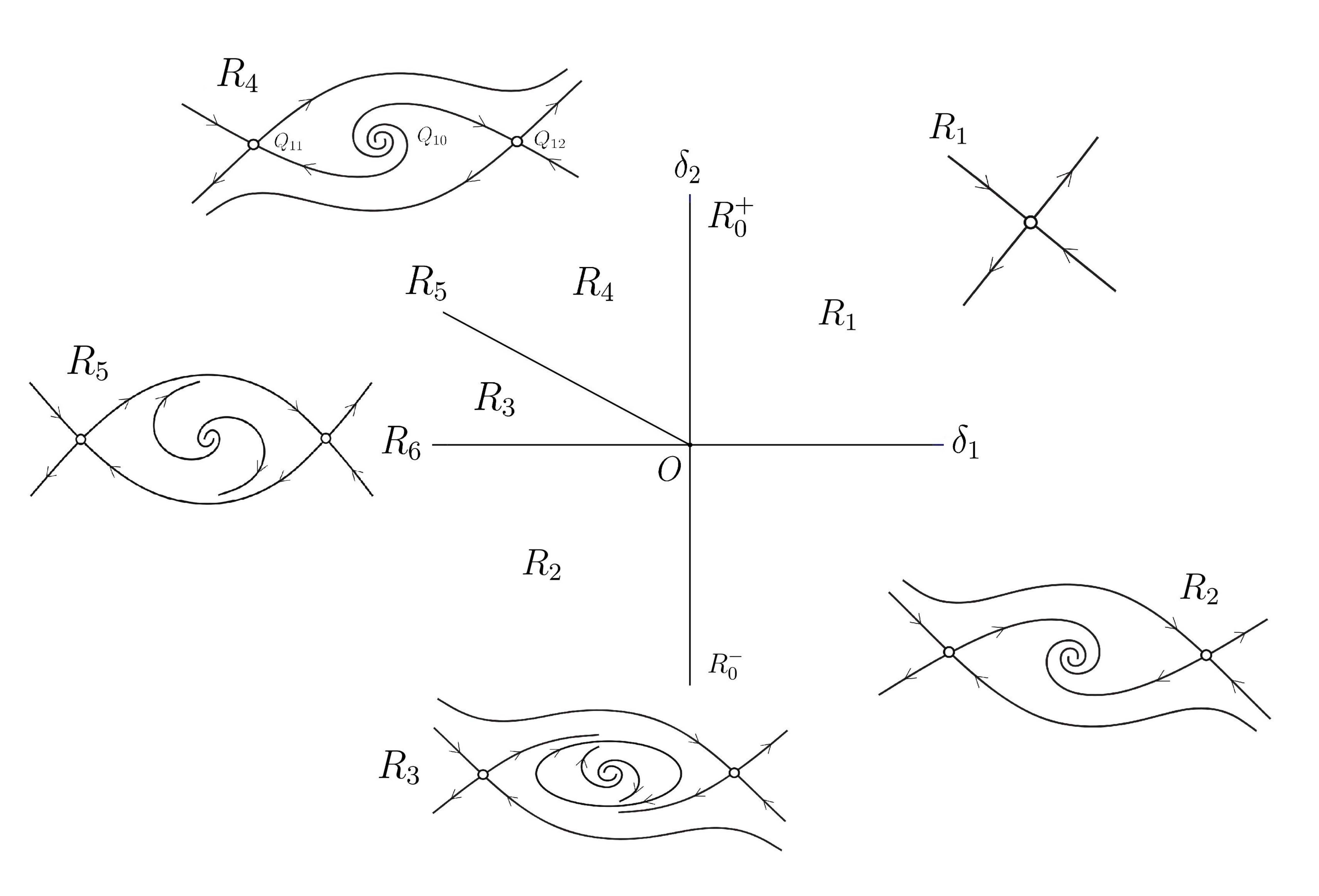}
}
\caption{Bifurcation diagram of system \eqref{1.2.9}}
\label{resonance1bi2-2}
\end{figure}
\begin{description}
\item[(i)]~~When the parameter $\delta$ lies in the region $R_1:=\{\delta|\delta_1>0, \delta_2\in\mathbb{R}\}$, system \eqref{1.2.9} possesses a unique trivial equilibrium point $Q_{10}$, which is a saddle point. If $\delta$ lies in the line $R^-_0:=\{\delta|\delta_1=0, \delta_2<0\}$, $Q_{10}$ is still a saddle point. As the parameter $\delta$ crosses $R^-_0$ from $R_1$ to the region $R_2:=\{\delta|\delta_1<0, \delta_2<0\}$, system \eqref{1.2.9} undergoes a pitchfork bifurcation and generates a pair of symmetry-coupled saddles $Q_{11}$, $Q_{12}$. Meanwhile, the trivial equilibrium point $Q_{10}$ becomes a stable node and subsequently turns into a stable focus in the region $R_2$.

\item[(ii)]~When $\delta$ lies in the line $R_6:=\{\delta|\delta_1<0, \delta_2=0\}$, the equilibrium point $Q_{10}$ remains as a stable focus. When the parameter $\delta$ crosses $R_6$ from $R_2$ to the region $R_3:=\{\delta|0<\delta_2<-\delta_1/5+o(\delta_1),\delta_1<0\}$, system \eqref{1.2.9} undergoes a Hopf bifurcation and produces a unique stable limit cycle. The trivial equilibrium point $Q_{10}$ becomes an unstable focus simultaneously.

\item[(iii)]When $\delta$ lies in the curve $R_5:=\{\delta|\delta_2=-\delta_1/5+o(\delta_1),\delta_1<0\}$, the trivial equilibrium point $Q_{10}$ is still an unstable focus, while the two nontrivial equilibrium points $Q_{11}$ and $Q_{12}$ remain to be saddle points. Further, the heteroclinic orbits connecting saddles $Q_{11}$ and $Q_{12}$ appear simultaneously and form a heteroclinic cycle, then the limit cycle disappears. As $\delta$ crosses $R_5$ from $R_3$ to the region $R_4:=\{\delta|\delta_2>-\delta_1/5+o(\delta_1),\delta_1<0\}$, system \eqref{1.2.9} undergoes a heteroclinic bifurcation and the two heteroclinic orbits disappear. Additionally, the unstable equilibrium point $Q_{10}$ coexists with the saddles $Q_{11}$ and $Q_{12}$.

\item[(iv)]When $\delta$ lies in the line $R_0^{+}:=\{\delta|\delta_2>0, \delta_1=0\}$, the three equilibrium points coincide and form a saddle point. As $\delta$ crosses $R^+_0$ from $R_4$ to $R_1$, system \eqref{1.2.9} undergoes a pitchfork bifurcation and the two nontrivial equilibrium points $Q_{11}$ and $Q_{12}$ disappear.
\end{description}

Let $\phi_{\delta}^1$ be the time-1 map of system \eqref{1.2.9}. Then the equilibrium points of system \eqref{1.2.9} are the fixed points of $\phi_{\delta}^1$, both system \eqref{1.2.9} and mapping $\phi_{\delta}^1$ undergo pitchfork bifurcations. Furthermore, the Hopf bifurcation of system \eqref{1.2.9} turns into a Neimark-Sacker bifurcation of the mapping $\phi_{\delta}^1$ as the limit cycle of system \eqref{1.2.9} becomes a closed invariant curve of $\phi_{\delta}^1$ with the corresponding stability properties.

As described in \cite[p.446]{Kuznetsov}, we see that the mapping $\phi_{\delta}^1$ approximates the second iterate of the original mapping $\Gamma_{{\epsilon}}$ near the 1:2 resonance region. Consequently, the trivial fixed point $Q_{10}$ of $\phi_{\delta}^1$ is the origin $O$ of $\Gamma_{{\epsilon}}$, while the other two nontrivial fixed points of $\phi_{\delta}^1$ form a period-two orbit. Thus, the pitchfork bifurcation of $\phi_{\delta}^1$ turns into a flip bifurcation of $\Gamma_{{\epsilon}}$. The invariant circle of $\phi_{\delta}^1$ produced from the Neimark-Sacker bifurcation near $Q_{10}$ corresponds to an invariant circle of $\Gamma_{{\epsilon}}$ generated by the same bifurcation near the trivial fixed point $Q_{10}$. Moreover, for $\Gamma_{{\epsilon}}$ the curve analogous to the heteroclinic bifurcation curve $R_5$ does not exist. However, in a neighborhood of $R_5$, the mapping $\Gamma_{{\epsilon}}$ produces infinite series of bifurcations in which homoclinic structures are involved. Consequently, as the parameter crosses the corresponding bifurcation sets, the long-period cycles appear and disappear from fold bifurcations.

Near the $1:2$ resonance point, when the parameter $\Lambda$ crosses $\mathfrak{L}^{'}_{2}$, the period-two orbit is generated. Due to the subcritical bifurcation direction, this periodic orbit is unstable and works together with the stability change of the fixed point $E_2$ to form a dynamic and drastic transition. When the parameter further changes to $\mathfrak{L}^{'}_{3}$, system generates an invariant circle $\Gamma$, coexisting with period-two orbit. When the parameters approach the curve $H_{2r}$, heteroclinic orbits appear on the invariant circle, connecting the saddle points of period-two orbit. This heteroclinic structure leads to the destabilization of the invariant circle, ultimately resulting in its disappearance. These phenomena manifest as the sensitivity of dynamic systems to small changes in parameters.
\end{proof}


\subsection{$1:3$ resonance at $E_2$}
It is known that when the eigenvalues of the Jacobian matrix at certain fixed point are
$$-\frac{1}{2}+\frac{\sqrt{3}}{2}\,{\bf{i}}~~\mbox{and}~~-\frac{1}{2}-\frac{\sqrt{3}}{2}\,{\bf{i}},$$
the two-dimensional system may have a $1:3$ resonance near the fixed point.

Regarding system \eqref{eq2.1},  when the following conditions
\begin{small}
\begin{eqnarray*}
&&{P_{E_2}}(-\frac{1}{2}+\frac{\sqrt{3}}{2}\,{\bf{i}})=\frac{\left(\left(2 \lambda +1\right) \mu^{2}+\left(-\lambda -4\right) \mu -\lambda \right) \beta^{2}+\left(\left(-7 \lambda -1\right) \mu^{2}+4 \lambda  \mu \right) \beta +5 \lambda  \,\mu^{2}}{2 \mu  \,\beta^{2}}\\
&&~~~~~~~~~~~~~~~~~~~~~~+{\bf{i}}\,\frac{\left(\left(-\mu^{2}+\left(-2-3 \lambda \right) \mu +3 \lambda \right) \beta^{2}+\left(\left(\lambda +3\right) \mu +2 \lambda \right) \mu  \beta -\lambda  \,\mu^{2}\right) \sqrt{3}}{2 \mu  \,\beta^{2}}=0,\\
&&{P_{E_2}}(-\frac{1}{2}-\frac{\sqrt{3}}{2}\,{\bf{i}})=\frac{\left(\left(2 \lambda +1\right) \mu^{2}+\left(-\lambda -4\right) \mu -\lambda \right) \beta^{2}+\left(\left(-7 \lambda -1\right) \mu^{2}+4 \lambda  \mu \right) \beta +5 \lambda  \,\mu^{2}}{2 \mu  \,\beta^{2}}\\
&&~~~~~~~~~~~~~~~~~~~~~~+{\bf{i}}\,\frac{\left(\left(\mu^{2}+\left(2+3 \lambda \right) \mu -3 \lambda \right) \beta^{2}-\left(\left(\lambda +3\right) \mu +2 \lambda \right) \mu  \beta +\lambda  \,\mu^{2}\right) \sqrt{3}}{2 \mu  \,\beta^{2}}=0,\\
&&\bigtriangleup({P_{E_2}}(t))=\frac{\left(\mu^{2}+\left(-4 \beta^{2}+4 \beta \right) \mu +4 \beta^{2}\right) {\left(\lambda\left(\left(\lambda -2\right) \beta^{2}+\left(-2 \lambda +1\right) \beta +\lambda \right) \mu^{2}+\beta^{2} \lambda^{2}\right)}^{2}}{\beta^{6} \mu^{4}}\\
&&~~~~~~~~~~~~~~+\frac{\left(\mu^{2}+\left(-4 \beta^{2}+4 \beta \right) \mu +4 \beta^{2}\right) {\left(\left(\beta^{2}+\left(\lambda -2\right) \beta -\lambda \right) \mu^{3}\right)}^{2}}{\beta^{6} \mu^{4}}\\
&&~~~~~~~~~~~~~~+\frac{\left(\mu^{2}+\left(-4 \beta^{2}+4 \beta \right) \mu +4 \beta^{2}\right) \left(-2\lambda\left(\left(\lambda -1\right) \beta -\lambda \right) \beta  \mu \right)^{2}}{\beta^{6} \mu^{4}}<0,\\
&&\lambda >0,\mu >0,\beta >0,\frac{\beta \mu -\beta -\mu}{\beta \mu}\ge 0,
\end{eqnarray*}
\end{small}
are satisfied, a $1:3$ resonance may occur near $E_2$. Employing the polynomial complete discrimination system theory to solve the corresponding semi-algebraic system
\begin{small}
\begin{eqnarray*}
\!\!\!\!\!\!\!\!\!\!&&PS_3:=\left\{\lambda>0,\mu>0,\beta>0,
\frac{\beta \mu -\beta -\mu}{\beta \mu}\ge 0,
{P_{E_2}}(-\frac{1}{2}+\frac{\sqrt{3}}{2}\,{\bf{i}})=0,\right.
\\
\!\!\!\!\!\!\!\!\!\!&&~~~~~~~~~~~~\left.{P_{E_2}}(-\frac{1}{2}-\frac{\sqrt{3}}{2}\,{\bf{i}})=0,
\bigtriangleup({P_{E_2}}(t))<0\right\},
\end{eqnarray*}
\end{small}we get
$$\mu=7~~\mbox{and}~~\beta=\frac{7}{3}.$$
Consequently, the following conclusions are obtained.
\begin{thm}
System \eqref{eq2.1} undergoes a $1:3$ resonance as the parameter $\Lambda\in \mathbb{R}_+^{3}$ crosses $\{(\lambda,\mu,\beta)|\lambda>0,\mu=7,\beta=7/3\}$. Specifically, in a sufficiently small neighborhood of the $1:3$ resonance region, the following bifurcation phenomena occur:
\begin{description}
    \item[${\rm (i)}$]System $\eqref{eq2.1}$ always has a fixed point $E_2$. As the parameter $\Lambda$ crosses
    $$
    \mathfrak{L}^{''}_3=\left\{(\lambda,\mu,\beta)\bigg|\lambda>0,
    \mu=\frac{\beta}{\beta-2}+O\left(\left|\left(\beta-\frac{7}{3}\right)\right|^2\right)
    ,\beta>\frac{9}{4}\right\}
    $$
    from the side $\mu <\beta/(\beta-2)+O(|(\beta-7/3|^2)$ to the side $\mu >\beta/(\beta-2)+O(|(\beta-7/3)|^2)$, system \eqref{eq2.1} undergoes a nondegenerate supercritical Neimark-Sacker bifurcation and produces a stable invariant $\Gamma$ circle surrounding $E_2$.
    \item[${\rm (ii)}$]System $\eqref{eq2.1}$ always has a saddle cycle of period-three $\{T_{11}, T_{12},T_{13}\}$.
    \item[${\rm (iii)}$]As the parameter $\Lambda$ crosses
    \begin{footnotesize}
    \begin{eqnarray*}
    H_{3r}:=\left\{(\lambda,\mu,\beta)\bigg|\lambda>0,\mu=-\frac{9 \beta}{4}+\frac{1001}{80}-\frac{21 \sqrt{120 \beta -279}}{80}+O\left(\left|\left(\beta-\frac{7}{3}\right)\right|^2,\beta>\frac{9}{4}\right)
    \right\}
    \end{eqnarray*}
    \end{footnotesize}from the region $\mu <-9 \beta/4+1001/80-21 \sqrt{120 \beta -279}/80+O\left(\left|\left(\beta-7/3\right)\right|^2\right)$ to $\mu >-9 \beta/4+1001/80-21 \sqrt{120 \beta -279}/80+O\left(\left|\left(\beta-7/3\right)\right|^2\right)$, the invariant circle disappears, but the saddle cycle of period-three $\{T_{11}, T_{12},T_{13}\}$ still exists.
\end{description}
\label{th1-3}
\end{thm}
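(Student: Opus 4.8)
The plan is to mirror the architecture of the proof of Theorem~\ref{th1-2}, replacing the double eigenvalue $-1$ by the conjugate pair $e^{\pm 2\pi{\bf{i}}/3}=-\tfrac12\pm\tfrac{\sqrt3}{2}{\bf{i}}$. First I would translate $E_2$ to the origin via $(x,y,z)=(m+1/\beta,\,n+(\beta\mu-\beta-\mu)/\beta\mu,\,l)$ and introduce the unfolding parameters $\alpha_1:=\mu-7$ and $\alpha_2:=\beta-7/3$, then expand the map in Taylor series. Because $JF(E_2)$ has the third eigenvalue $\lambda(\beta\mu-\beta-\mu)/\beta\mu$ lying off the unit circle along the $l$-direction, I would restrict the map to the two-dimensional $C^2$ center manifold $l=h(m,n)=O(|(m,n)|^3)$ exactly as in Theorem~\ref{th3.1}, obtaining a planar map whose linear part carries the eigenvalues $e^{\pm 2\pi{\bf{i}}/3}$ at $(\alpha_1,\alpha_2)=(0,0)$.

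Next I would diagonalize the planar map with the eigenvectors of the critical linearization and pass to the complex coordinate $z=\xi+{\bf{i}}\eta$, producing a map $z\mapsto \mu(\epsilon)z+\sum_{j+k=2}g_{jk}z^j\bar z^k+\sum_{j+k=3}g_{jk}z^j\bar z^k+O(|z|^4)$ with $\mu(0)=e^{2\pi{\bf{i}}/3}$. Since $\theta_0=2\pi/3$ satisfies $e^{3{\bf{i}}\theta_0}=1$, the resonant monomials are $\bar z^2$ at quadratic order and $z|z|^2=z^2\bar z$ at cubic order. Applying successive near-identity transformations to annihilate the non-resonant terms, I would bring the map to the Poincar\'e normal form for a $1:3$ resonance,
\begin{eqnarray*}
w\mapsto e^{{\bf{i}}\theta_0(\epsilon)}w+B(\epsilon)\bar w^2+C(\epsilon)w|w|^2+O(|w|^4),
\end{eqnarray*}
and evaluate $B(0)$ and $C(0)$ at $\mu=7$, $\beta=7/3$. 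The two non-degeneracy conditions for the $1:3$ resonance from \cite{Kuznetsov}, namely $(R3.1)$ $B(0)\neq0$ and $(R3.2)$ ${\rm Re}\big(e^{-{\bf{i}}\theta_0}C(0)\big)\neq0$, would then follow from these explicit expressions.

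For transversality I would write the two genuine unfolding parameters of the normal form --- the radial multiplier $|\mu(\epsilon)|-1$ and the angular deviation $\arg\mu(\epsilon)-\theta_0$ --- as functions of $(\alpha_1,\alpha_2)$ and verify that the Jacobian of this map is non-singular at the origin, precisely as the determinant was computed in the $1:2$ case. With all three conditions established, the unfolding of \cite{Kuznetsov} gives the stated picture: the resonant term $B\bar w^2$ forces a saddle cycle of period three $\{T_{11},T_{12},T_{13}\}$ to persist for every nearby parameter value (part (ii)); crossing the Neimark--Sacker curve $\mathfrak{L}^{''}_3$ in the indicated direction spawns the stable invariant circle $\Gamma$ around $E_2$ (part (i)); and crossing the homoclinic curve $H_{3r}$ --- obtained by translating the homoclinic-loop condition of the rescaled normal-form flow back to the $(\mu,\beta)$-plane --- destroys $\Gamma$ while the period-three saddle cycle survives (part (iii)). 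The main obstacle I anticipate is the symbolic evaluation of the cubic coefficient $C(0)$: it aggregates the genuine cubic terms together with the quadratic-times-quadratic contributions produced by the quadratic near-identity transformation, and --- as the authors already flag for $E_3$ --- such three-dimensional normal-form computations strain the available symbolic capacity, so pinning down $C(0)$ and thereby the explicit expression of $H_{3r}$ (with its $\sqrt{120\beta-279}$ term) will be the delicate step.
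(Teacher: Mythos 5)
Your plan follows the same architecture as the paper's actual proof: the same translation of $E_2$ to the origin with unfolding parameters $(\mu-7,\beta-7/3)$, the same two-dimensional center-manifold reduction, complexification, reduction to the $1:3$ normal form $w\mapsto\zeta(\hat{\varepsilon})w+B(\hat{\varepsilon})\bar{w}^{2}+C(\hat{\varepsilon})w^{2}\bar{w}+O(|w|^{4})$, an equivalent transversality check (your map $(\alpha_1,\alpha_2)\mapsto(|\mu(\epsilon)|-1,\arg\mu(\epsilon)-\theta_0)$ plays the role of the paper's regularity of $(\hat{\varepsilon}_1,\hat{\varepsilon}_2)\mapsto(\beta_1,\beta_2)$), and the same appeal to the unfolding in \cite{Kuznetsov} for the phase portraits.

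There is, however, one genuine defect, and it traces exactly to the step you skip: the approximation of the \emph{third iterate} of the normal-form map by the time-1 flow of a planar vector field (\cite[Lemma 9.13, p.450]{Kuznetsov}). The non-degeneracy conditions are imposed on the coefficients of that flow, $b_1(0)=3\bar{\zeta}(0)B(0)$ and $c_1(0)=-3|B(0)|^{2}+3\zeta^{2}(0)C(0)$, so the correct cubic condition (R3.2) reads $\mathrm{Re}\,c_1(0)\neq0$, i.e. $\mathrm{Re}\bigl(e^{-{\bf i}\theta_0}C(0)\bigr)\neq|B(0)|^{2}$; your version, $\mathrm{Re}\bigl(e^{-{\bf i}\theta_0}C(0)\bigr)\neq0$, drops the $-3|B(0)|^{2}$ contribution generated when the quadratic resonant term feeds back into the cubic one under iteration. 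This is not cosmetic: the sign of $\mathrm{Re}\,c_1(0)$ (the paper computes $b_1(0)=21/4+7\sqrt{3}\,{\bf i}/2$ and $\mathrm{Re}\,c_1(0)=-1715/16<0$, whence $R_c(0)=-5/3<0$ after normalization) is precisely what makes the Neimark-Sacker bifurcation in part (i) supercritical with a \emph{stable} circle, makes the triangle of saddle connections stable from inside, and orients the crossing of $H_{3r}$ in part (iii); checking only $\mathrm{Re}\bigl(e^{-{\bf i}\theta_0}C(0)\bigr)$ could certify the wrong quantity and the wrong stability. The same flow approximation is also what legitimizes part (iii): the curve $H_{3r}$ comes from a \emph{heteroclinic} cycle joining the three saddles $T_{11},T_{12},T_{13}$ of the approximating flow (not a homoclinic loop as you describe), which for the genuine map breaks into homoclinic structures while the period-three saddle cycle persists. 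With Lemma 9.13 inserted and (R3.2) corrected accordingly, your plan reproduces the paper's argument.
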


\begin{proof}
Translating $E_2$ to the origin $O$ with the transformation
$$(x,y,z)=\left(m_2+\frac{1}{\beta},n_2+\frac{\beta  \mu -\beta -\mu}{\beta \mu},l_2\right),$$
and taking $\hat{\varepsilon}:=(\hat{\varepsilon}_1, \hat{\varepsilon}_2)=(\mu-7, \beta-7/3)$ as a new parameter, system \eqref{eq2.1} becomes the mapping
\begin{eqnarray}
\left(
\begin{array}{l}
m_2
\\
n_2
\\
l_2
\end{array}
\right)
\mapsto
\left(
\begin{array}{l}
-\frac{\left(3 \hat{\varepsilon}_{1}-3 \hat{\varepsilon}_{2}+14\right) m_2}{3 \hat{\varepsilon}_{2}+7}-\frac{3 \left(\hat{\varepsilon}_{1}+7\right) n_2}{3 \hat{\varepsilon}_{2}+7}-\frac{3 \left(\hat{\varepsilon}_{1}+7\right) l_2}{3 \hat{\varepsilon}_{2}+7}\\
-\left(\hat{\varepsilon}_{1}+7\right) m_2^{2}-\left(\hat{\varepsilon}_{1}+7\right) n_2 m_2-\left(\hat{\varepsilon}_{1}+7\right) l_2 m_2
\\
\frac{m_2 \left(3 \hat{\varepsilon}_{2} \hat{\varepsilon}_{1}+4 \hat{\varepsilon}_{1}+18 \hat{\varepsilon}_{2}+21\right)}{3 \left(\hat{\varepsilon}_{1}+7\right)}+n_2-\frac{l_2 \left(3 \hat{\varepsilon}_{2} \hat{\varepsilon}_{1}+4 \hat{\varepsilon}_{1}+18 \hat{\varepsilon}_{2}+21\right)}{3 \left(\hat{\varepsilon}_{1}+7\right)}\\
+\frac{\left(3 \hat{\varepsilon}_{2}+7\right) m_2 n_2}{3}-\frac{\left(3 \hat{\varepsilon}_{2}+7\right) n_2 l_2}{3}
\\
\frac{\lambda  \left(3 \hat{\varepsilon}_{2} \hat{\varepsilon}_{1}+4 \hat{\varepsilon}_{1}+18 \hat{\varepsilon}_{2}+21\right) l_2}{\left(3 \hat{\varepsilon}_{2}+7\right) \left(\hat{\varepsilon}_{1}+7\right)}+\lambda  n_2 l_2\\
\end{array}
\right).
\label{1.3.1}
\end{eqnarray}
Using the idea done in the proof of Theorem \ref{th3.1}, mapping \eqref{1.3.1} is restricted to a two dimensional $C^{2}$ center manifold
\begin{eqnarray*}
l_2=h_{7}(m_2,n_2)=O(|(m_2,n_2)|^3),
\end{eqnarray*}
and then it is translated into the subsequent two dimensional form
\begin{eqnarray}
\left[
\begin{array}{ccc}
   m_2  \\
   \noalign{\medskip}
   n_2
\end{array}
\right]
\mapsto
\left[\begin{array}{cc}
-\frac{\left(14-3 \hat{\varepsilon}_{2}+3 \hat{\varepsilon}_{1}\right) m_2}{3 \hat{\varepsilon}_{2}+7}-\frac{3 \left(\hat{\varepsilon}_{1}+7\right) n_2}{3 \hat{\varepsilon}_{2}+7}-m^{2}_2 \left(\hat{\varepsilon}_{1}+7\right)-n_2 m_2 \left(\hat{\varepsilon}_{1}+7\right)\\
\noalign{\medskip}
 \frac{m_2 \left(3 \hat{\varepsilon}_{1} \hat{\varepsilon}_{2}+4 \hat{\varepsilon}_{1}+18 \hat{\varepsilon}_{2}+21\right)}{3 \left(\hat{\varepsilon}_{1}+7\right)}+n_2+\frac{\left(3 \hat{\varepsilon}_{2}+7\right) m_2 n_2}{3}
 \end{array}\right].
\label{1.3.2}
\end{eqnarray}
Employing the transformation
\begin{eqnarray*}
\left(
\begin{array}{cc}
m_2 \\
n_2
\end{array}
\right)
\rightarrow
\left(
\begin{array}{cc}
\chi_{31}&\chi_{32}\\
1&0
\end{array}
\right)
\left(
\begin{array}{l}
m_3
\\
n_3
\end{array}
\right),
\end{eqnarray*}
where
\begin{eqnarray*}
&&\chi_{31}=-\frac{9 \hat{\varepsilon}_{1}^{2}+126 \hat{\varepsilon}_{1}+441}{18 \hat{\varepsilon}_{1} \hat{\varepsilon}_{2}^{2}+66 \hat{\varepsilon}_{1} \hat{\varepsilon}_{2}+108 \hat{\varepsilon}_{2}^{2}+56 \hat{\varepsilon}_{1}+378 \hat{\varepsilon}_{2}+294},\\
&&\chi_{32}=-\frac{3 \sqrt{36 \hat{\varepsilon}_{1} \hat{\varepsilon}_{2}^{2}-9 \hat{\varepsilon}_{1}^{2}+132 \hat{\varepsilon}_{1} \hat{\varepsilon}_{2}+216 \hat{\varepsilon}_{2}^{2}-14 \hat{\varepsilon}_{1}+756 \hat{\varepsilon}_{2}+147}\, \left(\hat{\varepsilon}_{1}+7\right)}{2 \left(3 \hat{\varepsilon}_{1} \hat{\varepsilon}_{2}+4 \hat{\varepsilon}_{1}+18 \hat{\varepsilon}_{2}+21\right) \left(3 \hat{\varepsilon}_{2}+7\right)},\\
\end{eqnarray*}
system \eqref{1.3.2} turns into the mapping
\begin{eqnarray}
\left(
\begin{array}{l}
m_3 \\
n_3
\end{array}
\right)
\rightarrow
\left(
\begin{array}{l}
k_{11}m_3+k_{12}n_3+p_{20}m^2_3+p_{11}m_3n_3+p_{02}n^2_3+p_{30}m^3_3\\
+p_{21}m^2_3n_3+p_{12}m_3n^2_3+p_{12}m_3n^2_3+p_{03}n^3_3+O(|(m_3,n_3)|^4)\\
k_{21}m_3+k_{22}n_3+q_{20}m^2_3+q_{11}m_3n_3+q_{02}n^2_3+q_{30}m^3_3\\
+q_{21}m^2_3n_3+q_{12}m_3n^2_3+q_{12}m_3n^2_3+q_{03}n^3_3+O(|(m_3,n_3)|^4)\\
\end{array}
\right),
\label{1.3.3}
\end{eqnarray}
where
\begin{eqnarray*}
&&k_{11}=k_{22}=\frac{6\hat{\varepsilon}_{2}-3\hat{\varepsilon}_{1}-7}{6\hat{\varepsilon}_{2}+14},\\
&&k_{12}=-k_{21}=-\frac{\sqrt{-9 \hat{\varepsilon}_{1}^{2}+\left(36 \hat{\varepsilon}_{2}^{2}+132 \hat{\varepsilon}_{2}-14\right) \hat{\varepsilon}_{1}+216 \hat{\varepsilon}_{2}^{2}+756 \hat{\varepsilon}_{2}+147}}{6 \hat{\varepsilon}_{2}+14},\\
&&p_{30}=p_{02}=p_{03}=p_{21}=p_{12}=O(|(\hat{\varepsilon}_{1},\hat{\varepsilon}_{2})|^2),\\
&&p_{20}=-\frac{7}{2}-\frac{\hat{\varepsilon}_{1}}{3}+3 \hat{\varepsilon}_{2}+O(|(\hat{\varepsilon}_{1},\hat{\varepsilon}_{2})|^2),\\
&&p_{11}=-\frac{7 \sqrt{3}}{6}+\frac{\sqrt{3}\, \hat{\varepsilon}_{1}}{9}-2 \sqrt{3}\, \hat{\varepsilon}_{2}+O(|(\hat{\varepsilon}_{1},\hat{\varepsilon}_{2})|^2),\\
&&q_{30}=q_{21}=q_{12}=q_{03}=O(|(\hat{\varepsilon}_{1},\hat{\varepsilon}_{2})|^2),\\
&&q_{20}=7 \sqrt{3}+\frac{19 \sqrt{3}\, \hat{\varepsilon}_{1}}{6}-\frac{69 \sqrt{3}\, \hat{\varepsilon}_{2}}{2}+O(|(\hat{\varepsilon}_{1},\hat{\varepsilon}_{2})|^2),\\
&&q_{11}=\frac{35}{2}+\frac{13 \hat{\varepsilon}_{1}}{3}-30 \hat{\varepsilon}_{2}+O(|(\hat{\varepsilon}_{1},\hat{\varepsilon}_{2})|^2),\\
&&q_{02}=\frac{7 \sqrt{3}}{2}+\frac{\sqrt{3}\, \hat{\varepsilon}_{1}}{6}+\frac{9 \sqrt{3}\, \hat{\varepsilon}_{2}}{2}+O(|(\hat{\varepsilon}_{1},\hat{\varepsilon}_{2})|^2).\\
\end{eqnarray*}
Let $z_1:=m_1+{\bf{i}}\,n_1$, mapping \eqref{1.3.3} can be represented in the following complex form
\begin{small}
\begin{eqnarray}
\!\!\!\!\!\!\!\!\!z_1\mapsto\zeta(\hat{\varepsilon})z_1+t_{20}z_1^2+t_{11}z_1\bar{z_1}+t_{02}\bar{z_1}^2+t_{30}z_1^3+t_{21}z_1^2\bar{z_1}+t_{12}z_1\bar{z_1}^2+t_{03}\bar{z_1}^3+O(|z_1|^4),
\label{1.3.4}
\end{eqnarray}
\end{small}
where
\begin{eqnarray*}
&&\!\!\!\!\!\!\!\!\zeta(\hat{\varepsilon})=\frac{\sqrt{3}\, \left(3 \,{\bf{i}}-\sqrt{3}\right)}{6}-\frac{\sqrt{3}\, \left(3 \sqrt{3}+{\bf{i}}\right) \hat{\varepsilon}_{1}}{42}+\frac{3 \sqrt{3}\, \left(\sqrt{3}+5 \,{\bf{i}}\right) \hat{\varepsilon}_{2}}{14}+O(|(\hat{\varepsilon}_{1},\hat{\varepsilon}_{2})|^2),\\
&&\!\!\!\!\!\!\!\!t_{20}=\frac{7 \sqrt{3}\, \left(\sqrt{3}+{\bf{i}}\right)}{6}+\frac{\sqrt{3}\, \left(6 \sqrt{3}+13 \,{\bf{i}}\right) \hat{\varepsilon}_{1}}{18}-\frac{\sqrt{3}\, \left(9 \sqrt{3}+37 \,{\bf{i}}\right) \hat{\varepsilon}_{2}}{4}+O(|(\hat{\varepsilon}_{1},\hat{\varepsilon}_{2})|^2),\\
&&\!\!\!\!\!\!\!\!t_{11}=\frac{7 \left(-\sqrt{3}+9 \,{\bf{i}}\right) \sqrt{3}}{12}+\frac{\sqrt{3}\, \left(-\sqrt{3}+30 \,{\bf{i}}\right) \hat{\varepsilon}_{1}}{18}-\frac{\sqrt{3}\, \left(-\sqrt{3}+30 \,{\bf{i}}\right) \hat{\varepsilon}_{2}}{2}+O(|(\hat{\varepsilon}_{1},\hat{\varepsilon}_{2})|^2),\\
&&\!\!\!\!\!\!\!\!t_{02}=\frac{7 \sqrt{3}\, \left({\bf{i}}-3 \sqrt{3}\right)}{12}+\frac{7 \sqrt{3}\, \left(-\sqrt{3}+2 \,{\bf{i}}\right) \hat{\varepsilon}_{1}}{18}-\frac{\sqrt{3}\, \left(-11 \sqrt{3}+41 \,{\bf{i}}\right) \hat{\varepsilon}_{2}}{4}+O(|(\hat{\varepsilon}_{1},\hat{\varepsilon}_{2})|^2),\\
&&\!\!\!\!\!\!\!\!t_{30}=t_{21}=t_{12}=t_{03}=O(|(\hat{\varepsilon}_{1},\hat{\varepsilon}_{2})|^2).\\
\end{eqnarray*}

According to \cite[Lemma 9.12, p.448]{Kuznetsov}, for sufficiently small $|\hat{\varepsilon}|$ we can utilize a near-identity transformation to convert mapping \eqref{1.3.4} into the form
\begin{eqnarray}
w\mapsto\Gamma_{\hat{\varepsilon}}(w):=\zeta(\hat{\varepsilon})w+B(\hat{\varepsilon})\bar{w}^2+A(\hat{\varepsilon})w^2\bar{w}+O(|w|^4),
\label{1.3.5}
\end{eqnarray}
where
\begin{eqnarray*}
&&B(\hat{\varepsilon}):=\frac{g_{02}}{4},\\
&&A(\hat{\varepsilon}):=\frac{g_{20}g_{11}(2\zeta(\hat{\varepsilon})+\bar{\zeta}(\hat{\varepsilon})-3)}{4\,(\bar{\zeta}(\hat{\varepsilon})-1)(\zeta^2(\hat{\varepsilon})-\zeta(\hat{\varepsilon}))}
+\frac{|g_{11}|^2}{1-\bar{\zeta}(\hat{\varepsilon})}+\frac{g_{21}}{4}.
\end{eqnarray*}
Then it follows from \cite[Lemma 9.13, p. 450]{Kuznetsov} that the third iterate of the mapping \eqref{1.3.5} can be expressed as
\begin{eqnarray*}
\Gamma_{\hat{\varepsilon}}^3(w)=\phi(1,w,\hat{\varepsilon})+O(|w|^4),
\end{eqnarray*}
here $\phi(1,w,\hat{\varepsilon})$ denotes the time-1 flow of the planar differential system
\begin{eqnarray}
\begin{aligned}
\frac{dw}{dt}=\varrho(\hat{\varepsilon})w+B_1(\hat{\varepsilon})\bar{w}^2+A_1(\hat{\varepsilon})w^2\bar{w}
\end{aligned},
\label{1.3.6}
\end{eqnarray}
where
\begin{eqnarray*}
&&\varrho(\hat{\varepsilon})=\left(\frac{3}{7}+\frac{2 \sqrt{3}\,{\bf{i}}}{7}\right)\hat{\varepsilon}_1+\left(\frac{54}{7}-\frac{27 \sqrt{3}\,{\bf{i}}}{7}\right)\hat{\varepsilon}_2+O(|\hat{\varepsilon}|^2),\\
&&B_1(\hat{\varepsilon})=3\bar{\zeta}(\hat{\varepsilon})B(\hat{\varepsilon}),\\
&&A_1(\hat{\varepsilon})=-3|B(\hat{\varepsilon})|^2+3\zeta^2(\hat{\varepsilon})A(\hat{\varepsilon}).
\end{eqnarray*}

Let $\varrho(\hat{\varepsilon}):=\beta_1+\beta_2{\bf{i}}$, where
\begin{eqnarray}
\begin{cases}
\begin{array}{l}
\beta_1:=\frac{3\,\hat{\varepsilon}_1}{7}+\frac{54\,\hat{\varepsilon}_2}{7},\\
\beta_2:=\frac{2 \sqrt{3}\,\hat{\varepsilon}_1}{7}-\frac{27 \sqrt{3}\,\hat{\varepsilon}_2}{7}.
\end{array}
\end{cases}
\label{1.3.7}
\end{eqnarray}
One can check that
\begin{eqnarray*}
\det
\left.
\left(
\begin{array}{ll}
\frac{\partial\beta_1}{\partial\hat{\varepsilon}_1}&\frac{\partial\beta_1}{\partial\hat{\varepsilon}_2}\\
\frac{\partial\beta_2}{\partial\hat{\varepsilon}_1}&\frac{\partial\beta_2}{\partial\hat{\varepsilon}_2}
\end{array}
\right)
\right|_{(\hat{\varepsilon}_1,\hat{\varepsilon}_2)=(0,0)}
=-\frac{27 \sqrt{3}}{7}\neq0,
\end{eqnarray*}
implying that the resonance condition (R3.0) shown in \cite[p.451]{Kuznetsov} is satisfied. Let $\tilde{\beta}:=(\beta_1,\beta_2)\in\mathbb{R}$ as a new parameter, system \eqref{1.3.6} can be rewritten as
\begin{eqnarray}
\frac{dw}{dt}=(\beta_1+\beta_2\,{\bf{i}})w+b_1(\tilde{\beta})\bar{w}^2+c_1(\tilde{\beta})w^2\bar{w}+O(|\tilde{\beta}|^2|w|,|w|^4).
\label{1.3.8}
\end{eqnarray}
Taking $\tilde{\beta}=0$, we obtain
$$b_1(0)=\frac{21}{4}+\frac{7\sqrt{3}\,\bf{i}}{2}\neq0~~{\mbox{and}}~~\mathrm{Re}(c_1(0))=-\frac{1715}{16}\neq0,$$
implying that the conditions (R3.1) and (R3.2) in \cite[p.451]{Kuznetsov} are fulfilled. Consequently, system \eqref{eq2.1} undergoes a $1:3$ resonance.

Finally, we utilize the transformation $\omega=\gamma(\tilde{\beta})\eta$ with
\begin{eqnarray*}
\gamma(\tilde{\beta}):=\frac{\exp(\frac{\arg(b_1(\tilde{\beta}))}{3}\,{\bf{i}})}{|b_1(\tilde{\beta})|}
\end{eqnarray*}
to transform system \eqref{1.3.8} into
\begin{eqnarray}
\frac{d\eta}{dt}=((\beta_1+\beta_2\,{\bf{i}})\eta+\bar{\eta}^2+c(\tilde{\beta})\eta^2\bar{\eta}+O(|\tilde{\beta}|^2|\eta|,|\eta|^4),
\label{1.3.9}
\end{eqnarray}
where
$$
c(\tilde{\beta}):=\frac{c_1(\tilde{\beta})}{|b_1(\tilde{\beta})|^2}.
$$
Using the polar coordinates $\eta=\rho e^{{\bf{i}}\iota}$, we have the following approximating system of system \eqref{1.3.9}, given by
\begin{eqnarray}
\begin{cases}
\begin{array}{l}
\frac{d\rho}{dt}=\beta_1\rho+\rho^2\cos(3\iota)+R_c(\beta)\rho^3,\\
\frac{d\iota}{dt}=\beta_2-\rho \sin(3\iota)+I_c(\beta)\rho^2,
\end{array}
\end{cases}
\label{1.3.10}
\end{eqnarray}
where $R_c(\tilde{\beta})=\mathrm{Re}(c(\tilde{\beta}))$ and $I_c(\tilde{\beta})=\mathrm{Im}(c(\tilde{\beta}))$ are the real and imaginary parts of $c(\tilde{\beta})$, respectively.

From the fact $R_c(0)=-5/3<0$ and the theory of \cite[pp.451-454]{Kuznetsov}, we obtain the bifurcation diagrams of system \eqref{1.3.10}, as described in the following  (see Figure \ref{resonance1}).
\begin{figure}[htbp]
\centering
{
\includegraphics[width=10cm]{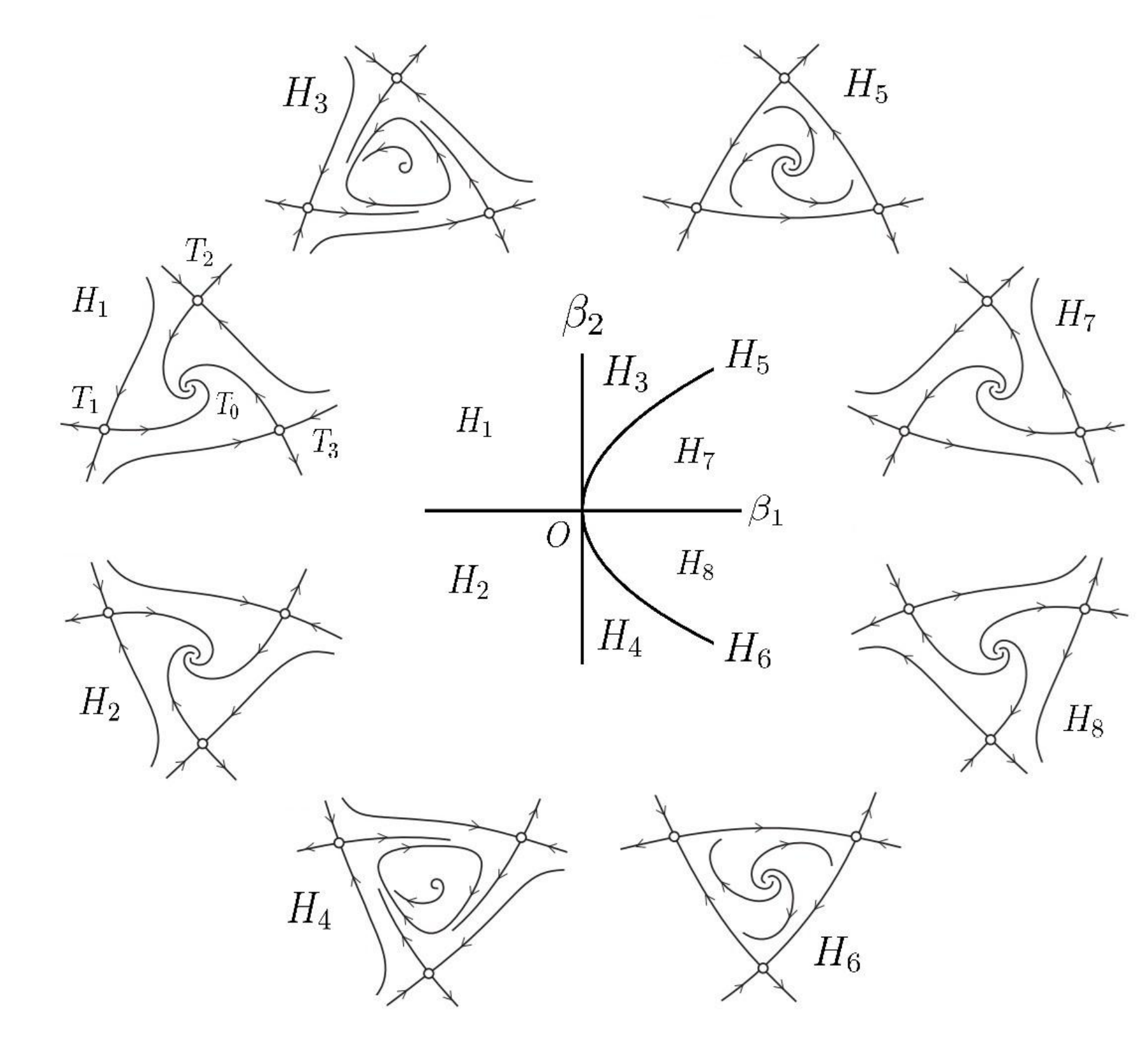}
}
\caption{Bifurcation diagram of system \eqref{1.3.10}}
\label{resonance1}
\end{figure}
\begin{description}
\item[(i)]As the parameter $(\beta_1, \beta_2)$ near the origin, system \eqref{1.3.10} possesses a trivial equilibrium point $T_0$ with $\rho=0$, which is a focus in the case $(\beta_1, \beta_2)\neq0$ and a degenerate saddle point in the case $(\beta_1, \beta_2)=0$. Additionally, there exist three nontrivial equilibrium points $T_k:(\rho_k,\iota_{s,k}),k = 1, 2, 3$, which are all saddle points, located on a circle with the radius of $r_s=\sqrt{\beta_1^2+\beta_2^2}+O(|\beta|^{\frac{3}{2}})$, and separated by the angle $2\pi/3$ in $\iota$-coordinate.

\item[(ii)]When $\beta_1<0$, the trivial equilibrium point $T_0$ is a stable focus. As $\tilde{\beta}$ crosses the $\beta_2$-axis from the left to right, $T_0$ becomes an unstable focus, system \eqref{1.3.10} undergoes a Hopf bifurcation, and produces a unique stable limit cycle.

\item[(iii)]There exists a bifurcation curve
    \begin{eqnarray*}
    H:=\left\{(\beta_1,\beta_2)\bigg|\beta_1=-\frac{R_c(0)\,\beta_2^2}{2}+o(\beta_2^2)\right\},
    \end{eqnarray*}
    which consists of $H_5$ and $H_6$ (see Figure \ref{resonance1}). Then system \eqref{1.3.10} has a heteroclinic cycle formed by the coincidence of the stable and unstable manifolds of the nontrivial saddles when $\tilde{\beta}\in H$. This heteroclinic cycle resembles a triangle and is stable from the inside.

\item[(iv)]As $\tilde{\beta}$ crosses the bifurcation curve $H$ from the left to right, the limit cycle disappears through a heteroclinic bifurcation.
\end{description}

According to \cite[pp.434]{Kuznetsov}, mapping $\Gamma_{\hat{\varepsilon}}$ is related to the differential equation \eqref{1.3.10} with the following properties: $\Gamma_{\hat{\varepsilon}}$ always possesses a trivial fixed point $O:(0,0)$, which corresponds to the trivial equilibrium point $T_0$ of \eqref{1.3.10}. $\Gamma_{\hat{\varepsilon}}$ undergoes a non-degenerate Neimark-Sacker bifurcation and generates an invariant circle surrounding the fixed point $T_0$ on a bifurcation curve corresponding to the Hopf bifurcation curve $\beta_2$-axis of \eqref{1.3.10}. The mapping $\Gamma_{\hat{\varepsilon}}$ also has a saddle cycle of period-three, which corresponds to the three nontrivial equilibrium points $T_k, k=1,2,3$ of \eqref{1.3.10}. If $({\hat{\varepsilon}}_1,{\hat{\varepsilon}}_2)$ lies on the curve corresponding to the bifurcation curve $H$ of \eqref{1.3.10}, then the saddle cycle of period-three lies in the invariant circle.

Near the $1:3$ resonance point, the system always has a period-three saddle cycle. When the parameter $\Lambda$ crosses $\mathfrak{L}^{''}_3$, the system undergoes a supercritical Neimark-Sacker bifurcation, producing a stable invariant circle. When the parameter approaches the curve $H_{3r}$, the period-three saddle cycle interacts with the invariant circle, and the invariant circle disappears, leaving only the period-three saddle cycle. These phenomena are manifested in ecological models as severe oscillations or chaos in population density, revealing the vulnerability of the food chain system at critical parameters.
\end{proof}

\subsection{$1:4$ resonance at $E_2$}
In this subsection, we discuss another type of codimension-2 bifurcation, i.e., $1:4$ resonance. Regarding system $(1.2)$, when the following conditions
\begin{small}
\begin{eqnarray*}
&&{P_{E_2}}({\bf{i}})=-2+\frac{\mu}{\beta}-2 \lambda +\lambda  \mu -\frac{3 \mu  \lambda}{\beta}+\frac{\lambda}{\mu}+\frac{3 \lambda}{\beta}+\frac{2 \mu  \lambda}{\beta^{2}}\\
&&~~~~~~~~~+{\bf{i}}\, \left(\frac{2 \lambda}{\mu}+1+\frac{\lambda}{\beta}+\frac{2 \mu}{\beta}-\mu +\frac{\mu  \lambda}{\beta}-2 \lambda -\frac{\mu  \lambda}{\beta^{2}}\right)=0,\\
&&{P_{E_2}}(-{\bf{i}})=-2+\frac{\mu}{\beta}-2 \lambda +\lambda  \mu -\frac{3 \mu  \lambda}{\beta}+\frac{\lambda}{\mu}+\frac{3 \lambda}{\beta}+\frac{2 \mu  \lambda}{\beta^{2}}\\
&&~~~~~~~~~+{\bf{i}}\,\left(-\frac{2 \lambda}{\mu}-1-\frac{\lambda}{\beta}-\frac{2 \mu}{\beta}+\mu -\frac{\mu  \lambda}{\beta}+2 \lambda +\frac{\mu  \lambda}{\beta^{2}}\right)=0,\\
&&\bigtriangleup({P_{E_2}}(t))=\frac{\left(\mu^{2}+\left(-4 \beta^{2}+4 \beta \right) \mu +4 \beta^{2}\right) {\left(\lambda\left(\left(\lambda -2\right) \beta^{2}+\left(-2 \lambda +1\right) \beta +\lambda \right) \mu^{2}+\beta^{2} \lambda^{2}\right)}^{2}}{\beta^{6} \mu^{4}}\\
&&~~~~~~~~~~~~~~+\frac{\left(\mu^{2}+\left(-4 \beta^{2}+4 \beta \right) \mu +4 \beta^{2}\right) {\left(\left(\beta^{2}+\left(\lambda -2\right) \beta -\lambda \right) \mu^{3}\right)}^{2}}{\beta^{6} \mu^{4}}\\
&&~~~~~~~~~~~~~~+\frac{\left(\mu^{2}+\left(-4 \beta^{2}+4 \beta \right) \mu +4 \beta^{2}\right) \left(-2\lambda\left(\left(\lambda -1\right) \beta -\lambda \right) \beta  \mu \right)^{2}}{\beta^{6} \mu^{4}}<0,\\
&&\lambda >0,\mu >0,\beta >0,\frac{\beta \mu -\beta -\mu}{\beta \mu}\ge 0,
\end{eqnarray*}
\end{small}
are satisfied, the matrix $JF(E_2)$ possesses a pair of complex eigenvalues equal to $\pm{\bf{i}}$. Hence, a $1:4$ resonance may occur near $E_2$. Further, solving the corresponding semi-algebraic system of above conditions, we obtain
$$\mu=5~~\mbox{and}~~\beta=\frac{5}{2},$$
and from which, we have

\begin{thm}
System \eqref{eq2.1} undergoes a $1:4$ resonance as the parameter $\Lambda\in \mathbb{R}_+^{3}$ crosses $\{(\lambda,\mu,\beta)|\lambda>0,\mu=5,\beta=5/2\}$. More specifically, in a sufficiently small neighborhood of the $1:4$ resonance region, the following bifurcation phenomena occur:
\begin{description}
    \item[${\rm (i)}$]System $\eqref{eq2.1}$ always has a fixed point $E_2$. As the parameter $\Lambda$ crosses
    $$
    \bar{\mathfrak{L}}_3:=\left\{(\lambda,\mu,\beta)\bigg|\lambda>0,\mu =\frac{\beta}{\beta-2}+O\left(\left|\left(\beta-\frac{5}{2}\right)\right|^2\right)
    ,\beta>\frac{9}{4}\right\}
    $$
    from the region $\mu<{\beta}/{\beta-2}+O(|(\beta-5/2)^2)$ to $\mu>{\beta}/{\beta-2}+O(|(\beta-5/2)^2)$, system $\eqref{eq2.1}$ undergoes a supercritical Neimark-Sacker bifurcation and generates a stable invariant circle $\Gamma$ surrounding $E_2$.
    \item[${\rm (ii)}$]System $\eqref{eq2.1}$ always has a saddle cycle of period-four $\{K_{11},K_{12},K_{13},K_{14}\}$.
    \item[${\rm (iii)}$]When the parameter $\Lambda$ lies in the homoclinic bifurcation region, the period-four saddle cycle lies in the invariant circle. If the parameter $\Lambda$ crosses the homoclinic bifurcation region from one side to the other side, the invariant circle disappears, but the saddle cycle of period-four $\{K_{11},K_{12},K_{13},K_{14}\}$ still exists.
\end{description}
\label{th1-4}
\end{thm}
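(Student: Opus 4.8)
The plan is to follow verbatim the reduction scheme used in the proofs of Theorems~\ref{th1-2} and \ref{th1-3}, adapting it to the $1:4$ resonance normal form. First I would translate $E_2$ to the origin by the same substitution $(x,y,z)=(m+1/\beta,\,n+(\beta\mu-\beta-\mu)/(\beta\mu),\,l)$ employed there, and introduce the unfolding parameter $\check{\varepsilon}:=(\check{\varepsilon}_1,\check{\varepsilon}_2)=(\mu-5,\beta-5/2)$, so that the resonance point corresponds to $\check{\varepsilon}=(0,0)$ and $JF(E_2)$ has eigenvalues $\pm{\bf i}$. As in Theorem~\ref{th3.1}, I would restrict the resulting three-dimensional map to a two-dimensional $C^2$ center manifold $l=h_{8}(m,n)=O(|(m,n)|^3)$, reducing the analysis to a planar map whose linear part at $\check{\varepsilon}=0$ is the rotation by $\pi/2$.

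Next I would apply an invertible linear change of coordinates normalizing the linear part and pass to the complex variable $z$, writing the map as $z\mapsto \zeta(\check{\varepsilon})z+\sum t_{jk}z^j\bar z^k+O(|z|^4)$ with $\zeta(0)={\bf i}$. Following the near-identity transformations in \cite[Section~9.5.4]{Kuznetsov}, I would eliminate all quadratic terms and all cubic terms except the resonant monomials $z^2\bar z$ and $\bar z^3$, so that the fourth iterate is approximated by the time-$1$ flow of the planar complex equation
\begin{eqnarray*}
\frac{dw}{dt}=\epsilon\,w+A(\check{\varepsilon})\,w|w|^2+B(\check{\varepsilon})\,\bar{w}^3+O(|w|^4),
\end{eqnarray*}
where $\epsilon=\epsilon(\check{\varepsilon})$ is the rescaled critical multiplier. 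After rotating $w=\gamma(\check{\varepsilon})\eta$ to normalize the $\bar w^3$-coefficient, the decisive invariant is the single complex number $A:=A(0)/|B(0)|$.

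The verification then splits into the same three parts as before. For the transversality condition I would show that $\check{\varepsilon}\mapsto\epsilon(\check{\varepsilon})$ is regular at the origin by computing the relevant Jacobian determinant and checking it is nonzero, exactly as the $-16/9$ and $-27\sqrt{3}/7$ computations in the earlier proofs. For nondegeneracy I would verify $B(0)\neq0$ and then evaluate $A$ at the resonance point, the qualitative picture in parts~(i)--(iii) being dictated by the location of $A$ in the $A$-plane according to the classification in \cite[Section~9.5.4]{Kuznetsov}. Concretely, the supercritical Neimark-Sacker bifurcation of the full map and the persistent period-four saddle cycle arise from the trivial and nontrivial equilibria of the averaged system, while the homoclinic region in part~(iii) is the parameter wedge in which the invariant circle is destroyed through the saddle connections of the period-four cycle.

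The main obstacle is twofold. Computationally, extracting $A(0)$ and $B(0)$ requires propagating the cubic normal form through two successive near-identity transformations stacked on the center-manifold reduction, which is heavier than the $1:3$ case because the resonant term $\bar w^3$ must be retained rather than discarded. Conceptually, the $1:4$ resonance---unlike $1:2$ and $1:3$---admits no complete, parameter-independent bifurcation diagram: for certain values of $A$ the dynamics involve an infinite sequence of homoclinic tangencies that is only partially understood. This is exactly why parts~(i)--(iii) assert only the Neimark-Sacker bifurcation, the existence of the period-four saddle cycle, and the disappearance of the invariant circle across the homoclinic region, rather than the orbit-by-orbit description available in Theorems~\ref{th1-2} and \ref{th1-3}.
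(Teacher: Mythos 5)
Your proposal follows essentially the same route as the paper's proof: translation of $E_2$ with the unfolding parameter $(\mu-5,\beta-5/2)$, center-manifold reduction, complexification and near-identity normalization to the $\mathbb{Z}_4$-equivariant $1{:}4$ normal form, approximation of the (rotation-composed) map by the time-one flow of the planar system $\dot{w}=(\omega_1+{\bf i}\,\omega_2)w+A(\omega)w\bar{w}^2+\bar{w}^3$, verification of the regularity condition (the paper's Jacobian determinant $-1/5\neq0$), and determination of parts (i)--(iii) from the position of the invariant $A$ in the $A$-plane per Kuznetsov's classification. The only content left implicit in your plan is the explicit coefficient computation, which the paper carries out to find $A_0=-8/325-(4/325)\,{\bf i}$ lying in region I of the partition, from which the stated bifurcation sequence follows.
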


\begin{proof}
Firstly, translating $E_2$ to the origin $O$ with the transformation
$$(x,y,z)=\left(m_4+\frac{1}{\beta},n_4+\frac{\beta  \mu -\beta -\mu}{\beta \mu},l_4\right),$$
and introducing $\tilde{\varepsilon}:=(\tilde{\varepsilon}_1, \tilde{\varepsilon}_2)=(\mu-5, \beta-5/2)$ as a new parameter, system \eqref{eq2.1} turns into the mapping
\begin{eqnarray}
\left(
\begin{array}{l}
m_4
\\
n_4
\\
l_4
\end{array}
\right)
\mapsto
\left(
\begin{array}{l}
-\frac{\left(2 \tilde{\varepsilon}_{1}-2 \tilde{\varepsilon}_{2}+5\right) m_4}{2 \tilde{\varepsilon}_{2}+5}-\frac{2 \left(\tilde{\varepsilon}_{1}+5\right) n_4}{2 \tilde{\varepsilon}_{2}+5}-\frac{2 \left(\tilde{\varepsilon}_{1}+5\right) l_4}{2 \tilde{\varepsilon}_{2}+5}\\
-\left(\tilde{\varepsilon}_{1}+5\right) m_4^{2}-\left(\tilde{\varepsilon}_{1}+5\right) n_4 m_4-\left(\tilde{\varepsilon}_{1}+5\right) l_4 m_4
\\
\frac{\left(2 \tilde{\varepsilon}_{2} \tilde{\varepsilon}_{1}+3 \tilde{\varepsilon}_{1}+8 \tilde{\varepsilon}_{2}+10\right) m_4}{2 \left(\tilde{\varepsilon}_{1}+5\right)}+n_4-\frac{\left(2 \tilde{\varepsilon}_{2} \tilde{\varepsilon}_{1}+3 \tilde{\varepsilon}_{1}+8 \tilde{\varepsilon}_{2}+10\right) l_4}{2 \left(\tilde{\varepsilon}_{1}+5\right)}\\
+\frac{m_4 n_4 \left(2 \tilde{\varepsilon}_{2}+5\right)}{2}-\frac{l_4 n_4 \left(2 \tilde{\varepsilon}_{2}+5\right)}{2}
\\
\frac{\lambda  \left(2 \tilde{\varepsilon}_{2} \tilde{\varepsilon}_{1}+3 \tilde{\varepsilon}_{1}+8 \tilde{\varepsilon}_{2}+10\right) l_4}{\left(2 \tilde{\varepsilon}_{2}+5\right) \left(\tilde{\varepsilon}_{1}+5\right)}+\lambda  n_4 l_4\\
\end{array}
\right).
\label{1.4.1}
\end{eqnarray}
As done in the proof of Theorem \ref{th3.1}, mapping \eqref{1.4.1} is restricted to a two dimensional $C^{2}$ center manifold
\begin{eqnarray*}
l_4=h_{8}(m_4,n_4)=O(|(m_4,n_4)|^3),
\end{eqnarray*}
and subsequently transformed to the following two dimensional form
\begin{eqnarray}
\left[
\begin{array}{ccc}
   m_4  \\
   \noalign{\medskip}
   n_4
\end{array}
\right]
\mapsto
\left[\begin{array}{cc}
-\frac{\left(2 \tilde{\varepsilon}_{1}-2 \tilde{\varepsilon}_{2}+5\right) m_4}{2 \tilde{\varepsilon}_{2}+5}-\frac{2 \left(\tilde{\varepsilon}_{1}+5\right) n_4}{2 \tilde{\varepsilon}_{2}+5}-m_4^{2} \left(\tilde{\varepsilon}_{1}+5\right)-n_4 m_4 \left(\tilde{\varepsilon}_{1}+5\right)\\
\noalign{\medskip}
 \frac{\left(2 \tilde{\varepsilon}_{1} \tilde{\varepsilon}_{2}+3 \tilde{\varepsilon}_{1}+8 \tilde{\varepsilon}_{2}+10\right) m_4}{2 \left(\tilde{\varepsilon}_{1}+5\right)}+n_4+\frac{m_4 n_4 \left(2 \tilde{\varepsilon}_{2}+5\right)}{2}
 \end{array}\right].
\label{1.4.2}
\end{eqnarray}

Next, employing the transformation
\begin{eqnarray*}
\left(
\begin{array}{cc}
m_4 \\
n_4
\end{array}
\right)
\rightarrow
\left(
\begin{array}{cc}
\chi_{41}&\chi_{42}\\
1&0
\end{array}
\right)
\left(
\begin{array}{l}
m_5
\\
n_5
\end{array}
\right),
\end{eqnarray*}
where
\begin{eqnarray*}
&&\chi_{41}=-\frac{2 \tilde{\varepsilon}_{1}^{2}+20 \tilde{\varepsilon}_{1}+50}{4 \tilde{\varepsilon}_{1} \tilde{\varepsilon}_{2}^{2}+16 \tilde{\varepsilon}_{1} \tilde{\varepsilon}_{2}+16 \tilde{\varepsilon}_{2}^{2}+15 \tilde{\varepsilon}_{1}+60 \tilde{\varepsilon}_{2}+50},\\
&&\chi_{42}=-\frac{2 \sqrt{4 \tilde{\varepsilon}_{1} \tilde{\varepsilon}_{2}^{2}-\tilde{\varepsilon}_{1}^{2}+16 \tilde{\varepsilon}_{1} \tilde{\varepsilon}_{2}+16 \tilde{\varepsilon}_{2}^{2}+5 \tilde{\varepsilon}_{1}+60 \tilde{\varepsilon}_{2}+25}\, \left(\tilde{\varepsilon}_{1}+5\right)}{\left(2 \tilde{\varepsilon}_{1} \tilde{\varepsilon}_{2}+3 \tilde{\varepsilon}_{1}+8 \tilde{\varepsilon}_{2}+10\right) \left(2 \tilde{\varepsilon}_{2}+5\right)},
\end{eqnarray*}
system \eqref{1.4.2} becomes
\begin{eqnarray}
\left(
\begin{array}{l}
m_4 \\
n_4
\end{array}
\right)
\rightarrow
\left(
\begin{array}{l}
x_{11}m_4+x_{12}n_4+x_{20}m^2_3+x_{11}m_4n_4+x_{02}n^2_3+x_{30}m^3_3\\
+x_{21}m^2_3n_4+x_{12}m_4n^2_3+x_{12}m_4n^2_3+x_{03}n^3_3+O(|(m_4,n_4)|^4)\\
y_{11}m_4+y_{12}n_4+y_{20}m^2_3+y_{11}m_4n_4+y_{02}n^2_3+y_{30}m^3_3\\
+y_{21}m^2_3n_4+y_{12}m_4n^2_3+y_{12}m_4n^2_3+y_{03}n^3_3+O(|(m_4,n_4)|^4)
\end{array}
\right),
\label{1.4.3}
\end{eqnarray}
where
\begin{eqnarray*}
&&x_{11}=y_{12}=\frac{2 \tilde{\varepsilon}_{2}-\tilde{\varepsilon}_{1}}{2 \tilde{\varepsilon}_{2}+5},\\
&&x_{12}=-y_{11}=-\frac{\sqrt{-\tilde{\varepsilon}_{1}^{2}+\left(4 \tilde{\varepsilon}_{2}^{2}+16 \tilde{\varepsilon}_{2}+5\right) \tilde{\varepsilon}_{1}+16 \tilde{\varepsilon}_{2}^{2}+60 \tilde{\varepsilon}_{2}+25}}{2 \tilde{\varepsilon}_{2}+5},\\
&&x_{30}=x_{02}=x_{03}=x_{21}=x_{12}=O(|(\tilde{\varepsilon}_{1},\tilde{\varepsilon}_{2})|^2),\\
&&x_{20}=-\frac{5}{2}-\frac{\tilde{\varepsilon}_{1}}{4}+2 \tilde{\varepsilon}_{2}+O(|(\tilde{\varepsilon}_{1},\tilde{\varepsilon}_{2})|^2),\\
&&x_{11}=-\frac{5}{2}-\tilde{\varepsilon}_{2}+O(|(\tilde{\varepsilon}_{1},\tilde{\varepsilon}_{2})|^2),\\
&&y_{30}=y_{21}=y_{12}=y_{03}=O(|(\tilde{\varepsilon}_{1},\tilde{\varepsilon}_{2})|^2),\\
&&y_{20}=\frac{5}{2}+\tilde{\varepsilon}_{1}-11 \tilde{\varepsilon}_{2}+O(|(\tilde{\varepsilon}_{1},\tilde{\varepsilon}_{2})|^2),\\
&&y_{11}=\frac{15}{2}+\frac{9 \tilde{\varepsilon}_{1}}{4}-14 \tilde{\varepsilon}_{2}+O(|(\tilde{\varepsilon}_{1},\tilde{\varepsilon}_{2})|^2),\\
&&y_{02}=\tilde{\varepsilon}_{1}+5+O(|(\tilde{\varepsilon}_{1},\tilde{\varepsilon}_{2})|^2).
\end{eqnarray*}
Let $z_2:=m_3+n_3\,{\bf{i}}$, mapping \eqref{1.4.3} can be rewritten in the complex form
\begin{small}
\begin{eqnarray}
z_2\mapsto\eta(\tilde{\varepsilon})z_2+l_{20}z_2^2+l_{11}z_2\bar{z_2}+l_{02}\bar{z_2}^2+l_{30}z_2^3+l_{21}z_2^2\bar{z_2}+l_{12}z_2\bar{z_2}^2+l_{03}\bar{z_2}^3+O(|z_2|^4),
\label{1.4.4}
\end{eqnarray}
\end{small}where
\begin{small}
\begin{eqnarray*}
&&\!\!\!\!\!\!\!\!\eta(\tilde{\varepsilon})={\bf{i}}+\left(-\frac{1}{5}+\frac{{\bf{i}}}{10}\right) \tilde{\varepsilon}_{1}+\left(\frac{2}{5}+\frac{4 \,{\bf{i}}}{5}\right) \tilde{\varepsilon}_{2}+O(|(\tilde{\varepsilon}_{1},\tilde{\varepsilon}_{2})|^2),\\
&&\!\!\!\!\!\!\!\!l_{20}=\frac{5}{2}+\tilde{\varepsilon}_{1}-\left(6+5 \,\bf{i}\right) \tilde{\varepsilon}_{2}+O(|(\tilde{\varepsilon}_{1},\tilde{\varepsilon}_{2})|^2),\\
&&\!\!\!\!\!\!\!\!l_{11}=-\frac{5}{4}+\frac{15 \,\bf{i}}{4}+\left(-\frac{1}{8}+\bf{i}\right) \tilde{\varepsilon}_{1}+\left(1-\frac{11 \,\bf{i}}{2}\right) \tilde{\varepsilon}_{2}+O(|(\tilde{\varepsilon}_{1},\tilde{\varepsilon}_{2})|^2),\\
&&\!\!\!\!\!\!\!\!l_{02}=-5-\frac{5 \,\bf{i}}{2}-\frac{5 \tilde{\varepsilon}_{1}}{4}+\left(8-6 \,\bf{i}\right) \tilde{\varepsilon}_{2}+O(|(\tilde{\varepsilon}_{1},\tilde{\varepsilon}_{2})|^2),\\
&&\!\!\!\!\!\!\!\!l_{30}=l_{21}=l_{12}=l_{03}=O(|(\tilde{\varepsilon}_{1},\tilde{\varepsilon}_{2})|^2).
\end{eqnarray*}
\end{small}

According to \cite[Lemma 9.14, p.455]{Kuznetsov}, for sufficiently small $|\tilde{\varepsilon}|$ we apply a near-identity transformation to transform mapping \eqref{1.4.4} into the form
\begin{eqnarray}
w_1\mapsto\Gamma_{\gamma}(w_1):=\zeta(\gamma)w_1+C(\gamma)\bar{w_1}^2+D(\gamma)w^2_1\bar{w_1}+O(|w_1|^4),
\label{1.4.5}
\end{eqnarray}
where
\begin{eqnarray*}
&&C(\gamma):=\frac{(1+3\,{\bf{i}})l_{20}l_{11}}{4}+\frac{(1-{\bf{i}})l_{11}\bar{l}_{11}}{2}
-\frac{(1+{\bf{i}})l_{02}\bar{l}_{02}}{4}+\frac{l_{21}}{2},\\
&&D(\gamma):=\frac{({\bf{i}}-1)l_{11}l_{02}}{4}-\frac{(1+{\bf{i}})l_{02}\bar{l}_{20}}{4}+\frac{l_{03}}{6}.
\end{eqnarray*}
Since $\zeta(0)=\exp(\frac{\pi\,{\bf{i}}}{2})= {\bf{i}}$, we can introduce a new parameter $\omega := (\omega_1,\omega_2)$, satisfying
\begin{eqnarray}
\zeta(0)^3\,\zeta(\gamma)=\exp(\omega_1+{\bf{i}}\,\omega_2).
\label{1.4.6}
\end{eqnarray}
Then, from \eqref{1.4.6} we obtain a mapping
\begin{eqnarray}
\gamma\mapsto \omega(\gamma)=(\omega_1(\gamma),\omega_2(\gamma)).
\label{1.4.7}
\end{eqnarray}
Note that
\begin{eqnarray*}
\det
\left.
\left(
\begin{array}{ll}
\frac{\partial\omega_1}{\partial\tilde{\varepsilon}_1}&\frac{\partial\omega_1}{\partial\tilde{\varepsilon}_2}\\
\frac{\partial\omega_2}{\partial\tilde{\varepsilon}_1}&\frac{\partial\omega_2}{\partial\tilde{\varepsilon}_2}
\end{array}
\right)
\right|_{(\tilde{\varepsilon}_1,\tilde{\varepsilon}_2)=(0,0)}
=-\frac{1}{5}\neq0,
\end{eqnarray*}
so the mapping \eqref{1.4.7} is regular at $0$. Hence, the new parameter $\omega$ can be used as the 
unfolding parameter and we present its normal form as
\begin{eqnarray}
N_{\omega}:w_1\mapsto \exp\left(\frac{\pi\,{\bf{i}}}{2}+\omega_1+{\bf{i}}\,\omega_2\right)w_1+c(\omega)w_1\,\bar{w_1}^2+d(\omega)\,\bar{w_1}^3,
\label{1.4.8}
\end{eqnarray}
where $c$ and $d$ are smooth functions of $\omega$ such that $c(0) = C(0)$ and $d(0) = D(0)$.
Define a linear transformation $\mathcal{T} : \mathbb{C}\rightarrow \mathbb{C}$ by
\begin{eqnarray}
w_1\mapsto \mathcal{T}\,w_1:=\bar{\lambda}(0)\,w_1=\exp\left(\frac{\pi\,{\bf{i}}}{2}\right)\,w_1=-{\bf{i}}\,w_1,
\label{1.4.9}
\end{eqnarray}
which is a clockwise rotation through $\pi/2$ and \eqref{1.4.8} remains unchanged with respect to $\mathcal{T}$. Note that $\mathcal{T}^4 = {\rm id}$, implying the corresponding phase portraits will possess $\mathbb{Z}_4$-symmetry.

Based on the proof of \cite[Theorem 3.28, p.80]{Wiggins}, for sufficiently small $|\omega|$ mapping \eqref{1.4.8} fulfills
\begin{eqnarray*}
\mathcal{T}N_{\omega}(w_1)=\psi^1_{\omega}(w_1)+O(|w_1|^4),
\end{eqnarray*}
here $\psi^1_{\omega}$ is a flow of the following approximating planar system
\begin{eqnarray}
\dot{w_1}=(\omega_1+{\bf{i}}\,\omega_2)\,w_1+c_1(\omega)w_1\bar{w}_1^2+d_1(\omega)\bar{w}_1^3,~~w_1\in\mathbb{C},
\label{1.4.10}
\end{eqnarray}
where $c_1$ and $d_1$ are smooth complex-valued functions of $\omega$ such that
\begin{eqnarray*}
c_1(0)=\bar{\lambda}(0)\,c(0)=-{\bf{i}}\,c(0),~~~~~d_1(0)=\bar{\lambda}(0)\,d(0)=-{\bf{i}}\,d(0).
\end{eqnarray*}

Employing the scaling
\begin{eqnarray*}
w_1:=\frac{\exp(\frac{{\bf{i}}\,\arg(d_1(\omega))}{4})\,\xi_1}{\sqrt{|d_1(\omega)|}}
\end{eqnarray*}
and introducing
\begin{eqnarray*}
A(\omega):=\frac{c_1(\omega)}{|d_1(\omega)|},
\end{eqnarray*}
system \eqref{1.4.10} is transformed into the following differential equation
\begin{eqnarray}
\dot{w}=(\omega_1+{\bf{i}}\,\omega_2)\,w+A(\omega)w\bar{w}^2+\bar{w}^3,~~w\in\mathbb{C}.
\label{1.4.11}
\end{eqnarray}
Let $\upsilon:=\bar{w}$, system \eqref{1.4.11} becomes
\begin{eqnarray}
\dot{\upsilon}=(\omega_1+{\bf{i}}\,\omega_2)\,\upsilon+\bar{A}(\omega)\upsilon\bar{\upsilon}^2+\bar{\upsilon}^3,~~\upsilon\in\mathbb{C}
\label{1.4.12}.
\end{eqnarray}
Rewriting \eqref{1.4.12} in polar coordinate $\upsilon=\rho \exp({\bf{i}}\,\nu)$, we obtain
\begin{eqnarray}
\begin{array}{l}
\frac{d\,\rho}{dt}=\omega_1\,\rho+\mathrm{Re}(\bar{A}(\omega))\rho^3+\rho^3\cos(4\,\nu),\\
\frac{d\,\nu}{dt}=\omega_2+\mathrm{Im}(\bar{A}(\omega))\rho^2+\rho^2\sin(4\,\nu).
\end{array}
\label{1.4.13}
\end{eqnarray}
One can check that
\begin{eqnarray*}
a_0:=\mathrm{Re}(\bar{A}(0))=-\frac{8}{325},~b_0:=\mathrm{Im}(\bar{A}(0))=-\frac{4}{325}.
\end{eqnarray*}
Further, from \cite[pp.80-81]{Kuznetsov1} we see that the bifurcation diagram of system \eqref{1.4.13} in $(\omega_1,\omega_2)$-plane is dependent on $A_0=a_0+{\bf{i}}\,b_0$. The division of $A_0$-plane corresponding to different bifurcation diagrams is shown in Figure \ref{partion4-1}. Additionally, since $a_0=-8/325$ and $b_0=-4/325$, $A_0$ is located in region I in Figure \ref{partion4-1}. Therefore, it follows from \cite[p.83]{Kuznetsov1} that system \eqref{1.4.13} has the following bifurcations in a neighbourhood of $(\omega_1,\omega_2)=(0,0)$ for sufficiently small $|\omega|$ (see Figure \ref{rea4-2}).
\begin{figure}[htbp]
\centering
{
\includegraphics[width=10cm]{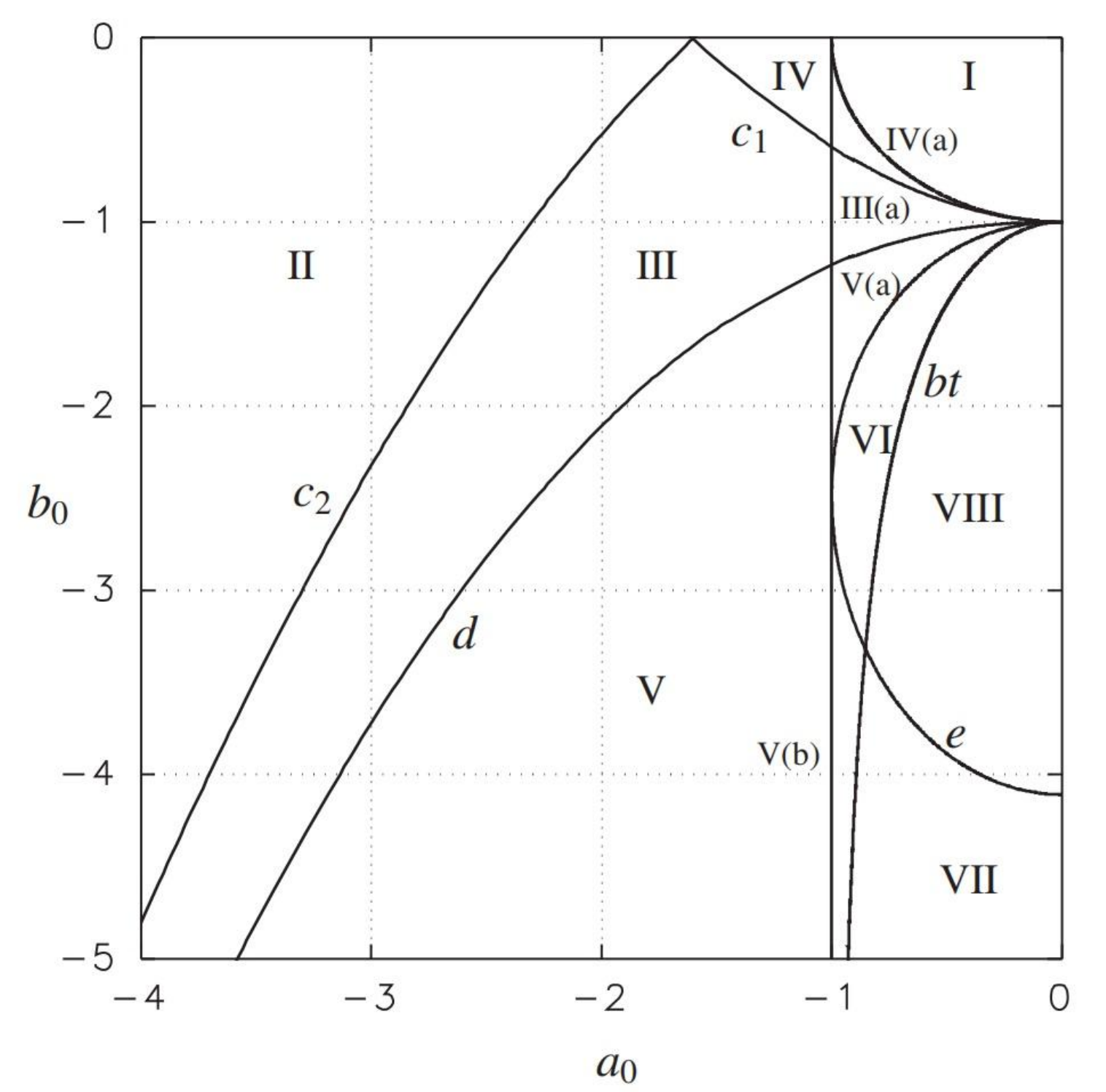}
}
\caption{Partitioning of $(a_0, b_0)$-plane of system \eqref{1.4.12}}
\label{partion4-1}
\end{figure}
\begin{figure}[htbp]
\centering
{
\includegraphics[width=10cm]{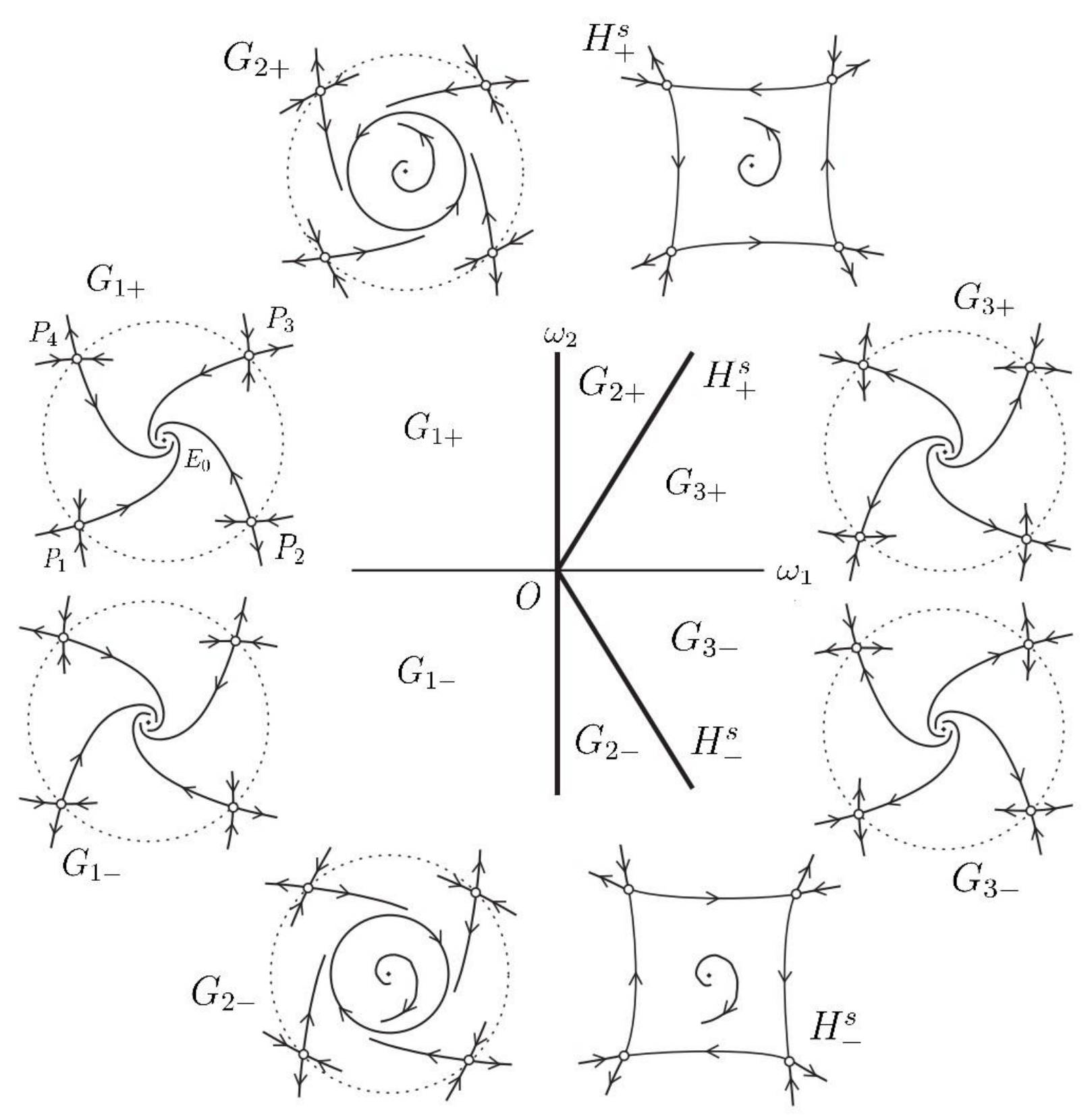}
}
\caption{Bifurcation diagram of system \eqref{1.4.13}}
\label{rea4-2}
\end{figure}
\begin{description}
\item[(i)]When $\omega=(\omega_1,\omega_2)$ lies in the regions $G_{1+}$ and $G_{1-}$ in Figure \ref{rea4-2}, system \eqref{1.4.13} possesses a trivial equilibrium point $E_0$ and four nontrivial saddles $P_{i}$ ($i=1,2,3,4$). As $\omega$ crosses the curve
    $$N=\{(\omega_1,\omega_2)\in\mathbb{R}^2:\omega_1=0\}$$
    from $G_{1+}$ and $G_{1-}$ to the regions $G_{2+}$ and $G_{2-}$, respectively, system \eqref{1.4.13} undergoes a Andronnov-Hopf bifurcation and produces a unique limit cycle in $G_{2+}$ and $G_{2-}$. Hence, the invariant circle and four saddle points coexist in $G_{2+}$ and $G_{2-}$.
\item[(ii)]When $\omega$ lies in the curves $H^{s}_{-}$ and $H^{s}_{+}$, the four saddle points $P_{i}$ ($i=1,2,3,4$) are in the invariant circle, which form a heteroclinic orbit.
\item[(iii)]When $\omega$ crosses $H^{s}_{+}$ and $H^{s}_{-}$ from $G_{2+}$ and $G_{2-}$ to $G_{3+}$ and $G_{3-}$, respectively, the limit cycle of system \eqref{1.4.13} disappears, i.e., the invariant circle disappears, but the four saddle points $P_{i}$ ($i=1,2,3,4$) still exist.
\end{description}

As demonstrated in \cite[p.458]{Kuznetsov}, the trivial equilibrium of system \eqref{1.4.13} correspond to the trivial fixed points of the mapping $N_\omega$, while the four symmetric nontrivial equilibrium points $P_{i}$ ($i=1,2,3,4$) correspond to a period-four cycle of $N_\omega$. The Hopf bifurcation in system \eqref{1.4.13} corresponds to the Neimark-Sacker bifurcation of $N_\omega$ and the heteroclinic connections in system \eqref{1.4.13} become heteroclinic structures of $N_\omega$. We complete the whole proof.

\end{proof}


\section{Weak resonance Arnold tongues}
From Theorem \ref{nsE2}, it is proved that when $\lambda>0,~\mu=\beta/(\beta-2)$ and $\beta>9/4$, system \eqref{eq2.1} undergoes a Neimark-Sacker bifurcation and generates a unique attractive invariant circle. In this section, we will investigate the parameter conditions of Arnold tongue on the invariant circle, so that system \eqref{eq2.1} possesses period $m$ orbits for $m\geq 5$. As demonstrated in \cite{Arrowsmith,Whitley}, this phenomenon requires the eigenvalues $t_{1}$, $t_{2}$, shown in $\eqref{solu-1}$, cross the unit circle and take the form of $\exp(2\pi\,{\bf i}\,n/m)$, where $n$ is a positive integer with irreducible $n/m\in(0,1)$.

\begin{thm}
Suppose $m\geq5$ is an integer, $n/m\in(0,1)$ with $n\in \mathbb{Z}^+$ is a irreducible fraction. Assume that the parameter $(\mu,\beta)$ is near $(\mu_*,\beta_*)$ and lies within an Arnold tongue $\mathcal{A}_{n/m}$ given by
$$\mathcal{A}_{n/m}=\left\{(\mu,\beta)\bigg|T_{-}+\frac{2\pi n}{m}<\arctan\left(\frac{\sqrt{4\beta^{2} \mu-4\beta^{2}-4\beta\mu-\mu^{2}}}{2 \beta-\mu}\right)<T_{+}+\frac{2\pi n}{m}\right\},$$
where
$\mu_* =\beta_*/(\beta_*-2)$, $\beta_*=(4\cos(2\pi n/m) - 5)/(2(\cos(2\pi n/m) - 1))$, and
\begin{eqnarray*}
&&T_{\pm}\approx\frac{\tilde{\varrho}_{2}(0)}{\check{\varrho}_{3}(0)}\,\left(\sqrt{{\frac{\mu  \left(\beta -2\right)}{\beta}}}-1\right)\pm\frac{|\varsigma(0)|}{|\check{\varrho}_{3}(0)|^{(m-2)/2}}\,\left(\sqrt{{\frac{\mu  \left(\beta -2\right)}{\beta}}}-1\right)^{\frac{m-2}{2}}.
\end{eqnarray*}
Then system \eqref{eq2.1} possesses two orbits of period-$m$, one is attractive and the other is repellent on an invariant circle, produced from the Neimark-Sacker bifurcation.
\label{th5-1}
\end{thm}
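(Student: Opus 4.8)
The plan is to follow the weak–resonance normal form reduction of \cite{Arrowsmith,Whitley} and \cite[Ch.~9]{Kuznetsov}, and to show that inside the locking region $\mathcal{A}_{n/m}$ the circle map induced on the invariant curve of Theorem~\ref{nsE2} acquires exactly two period-$m$ orbits of opposite stability. First I would reuse the reduction already performed in the proof of Theorem~\ref{nsE2}: translate $E_2$ to the origin, restrict $F$ to its two–dimensional center manifold, and diagonalize the linear part in the complex coordinate $z=\xi+{\bf i}\nu$, so that the reduced map reads $z\mapsto t\,z+\sum_{2\le a+b}g_{ab}\,z^{a}\bar z^{\,b}$. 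At the critical parameter $(\mu_*,\beta_*)$ the multiplier equals $\exp(2\pi{\bf i}\,n/m)$: imposing $\cos(2\pi n/m)=\mathfrak{R}(t)=(2\beta-5)/(2(\beta-2))$ on the unit circle $|t|=1$ yields exactly $\beta_*=(4\cos(2\pi n/m)-5)/(2(\cos(2\pi n/m)-1))$ together with $\mu_*=\beta_*/(\beta_*-2)$. Since $m\geq5$ the resonance is weak, so up to order $m-1$ the Poincar\'e--Birkhoff normal form retains only the rotation–invariant amplitude monomials $z|z|^{2j}$ and the single resonant monomial $\bar z^{\,m-1}$; at $t=\exp(2\pi{\bf i}\,n/m)$ these are the only monomials $z^{a}\bar z^{\,b}$ with $a+b\le m-1$ obeying the resonance relation $t^{a}\bar t^{\,b}=t$, and every other term of order $\le m-1$ is removed by a near–identity change of coordinates.

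Next I would pass to polar coordinates $z=\rho\,e^{{\bf i}\phi}$. The radial part reproduces the Neimark--Sacker picture of Theorem~\ref{nsE2}: the non-vanishing of the first Lyapunov quantity $\mathfrak{A}$ yields a unique invariant circle of radius $\rho_*\approx((|t|-1)/(-\check\varrho_3(0)))^{1/2}$, where $|t|=\sqrt{\mu(\beta-2)/\beta}$ and $\check\varrho_3(0)$ denotes the cubic radial coefficient (whose sign fixes the side on which the circle lives). Restricting the dynamics to this circle gives a circle diffeomorphism
\[
\phi\mapsto\phi+\arg t+\tilde\varrho_{2}(0)\,\rho_*^{2}+\varsigma(0)\,\rho_*^{\,m-2}\sin(m\phi+\psi_0)+O(\rho_*^{\,m-1}),
\]
in which the twist coefficient $\tilde\varrho_{2}(0)$ governs the dependence of the rotation number on amplitude and $\varsigma(0)$ is a nonzero multiple of the resonant coefficient of $\bar z^{\,m-1}$. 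By the transversality already verified in Theorem~\ref{nsE2}, the rotation number of the $\varsigma=0$ map is a strictly monotone function of the parameters and sweeps through $n/m$ precisely as $\arctan(\sqrt{4\beta^{2}\mu-4\beta^{2}-4\beta\mu-\mu^{2}}/(2\beta-\mu))$ passes $2\pi n/m$.

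I would then identify the period-$m$ orbits with the fixed points of the $m$-fold iterate of this circle map. Because the only symmetry–breaking term is $\bar z^{\,m-1}$, averaging the $m$ successive steps collapses the fixed-point condition $f^{m}(\phi)=\phi+2\pi n$ to a single phase equation of the form $\sin(m\phi+\psi_0)=\Delta/(\varsigma(0)\,\rho_*^{\,m-2})$, where $\Delta$ measures the detuning of the actual rotation angle from $2\pi n/m$. This equation has exactly two solutions modulo $2\pi/m$---hence two distinct period-$m$ cycles---if and only if $|\Delta|<|\varsigma(0)|\,\rho_*^{\,m-2}$; the two solutions collide in a saddle–node (fold) bifurcation of period-$m$ orbits on the boundary of the locking region, and linearizing the phase equation at the two roots shows one cycle attracting and the other repelling along the circle. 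Converting $|\Delta|<|\varsigma(0)|\,\rho_*^{\,m-2}$ back into a constraint on $\arg t$, and using $\rho_*^{2}\propto|t|-1=\sqrt{\mu(\beta-2)/\beta}-1$, reproduces the tongue $\mathcal{A}_{n/m}$ whose boundaries $T_{\pm}$ have the stated half-width $|\varsigma(0)|\,|\check\varrho_3(0)|^{-(m-2)/2}(\sqrt{\mu(\beta-2)/\beta}-1)^{(m-2)/2}$.

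The main obstacle is that for a general integer $m$ the coefficient $\varsigma(0)$ cannot be written in closed form: it is assembled through a cascade of $m-2$ successive normal-form transformations and so depends on every lower-order jet of the reduced map. The proof must therefore avoid computing $\varsigma(0)$ explicitly and instead only secure the genericity hypotheses demanded by the general theory of \cite{Whitley,Arrowsmith}: that the radial coefficient $\check\varrho_3(0)$ is nonzero---already supplied by $\mathfrak{A}\neq0$ in Theorem~\ref{nsE2}---so the invariant circle persists with a definite stability; that the twist $\tilde\varrho_{2}(0)$ is nonzero so the rotation number varies monotonically in the parameters; and that the resonant coefficient $\varsigma(0)$ is nonzero so the tongue has positive width. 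Once these non-degeneracies are granted, the existence, the count, and the stability of the two period-$m$ orbits, together with the fold structure of the tongue boundary, follow from the cited weak-resonance theorem.
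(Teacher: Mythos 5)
Your proposal is correct and follows essentially the same route as the paper: reduce to the center manifold, put the map into the weak-resonance Poincar\'e normal form keeping only the amplitude terms $z^{k+1}\bar z^{k}$ and the resonant term $\bar z^{\,m-1}$, pass to polar coordinates, invoke Whitley's weak-resonance theorem (Theorem 2.4 of \cite{Whitley}) for the two period-$m$ orbits and the tongue boundaries, and finally convert $(\chi_1,\chi_2)=\bigl(|t_1|-1,\arg t_1-2\pi n/m\bigr)$ back to $(\mu,\beta)$ to obtain $T_{\pm}$ and $\mathcal{A}_{n/m}$. The only difference is presentational: you unfold the circle-map phase-locking mechanism that underlies the cited theorem, whereas the paper treats that theorem as a black box and instead computes $\check\varrho_{3}(0)$ and $\tilde\varrho_{2}(0)$ explicitly.
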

\begin{proof}
For convenience, we write $t_{1}$ shown in $\eqref{solu-1}$ as the following exponential form
\begin{eqnarray}
t_{1}=(1+\chi_1(\mu,\beta))\,\exp({\bf i}\,(2\,\pi\,n/m+\chi_2(\mu,\beta))).
\label{a1}
\end{eqnarray}
It is clear that $\chi_1(\mu_*,\beta_*)=\chi_2(\mu_*,\beta_*)=0$. Based on the Poincar\'e normal form theory (see Lemma 2 in \cite[p.44]{Iooss} and \cite[p.259]{Arrowsmith}), for $(\mu,\beta)$ near $(\mu_*,\beta_*)$ system \eqref{eq2.1} has a normal form of complex type
\begin{eqnarray}
&&z\mapsto t_{1}z+\sum_{k=1}^{[(m-2)/2]}\varrho_{k+1,k}(\chi)\,z^{k+1}\,\bar{z}^{k}+\varsigma(\chi)\,\bar{z}^{m-1}+O(|z|^m),
\label{a2}
\end{eqnarray}
where $\chi:=(\chi_1(\mu,\beta),\chi_2(\mu,\beta))$, $[\cdot]$ denotes the integer part of $\cdot$, and $\varrho_{k+1,k}(\chi)$, $\varsigma(\chi)$ are both analytic functions of $\chi$. Let $z:=\check{\rho}\,\exp(2\pi{\bf i}\theta)$. Then system \eqref{a2} becomes the following polar coordinate form
\begin{eqnarray}
\begin{aligned}
\!\!\!\!\!\!\!\!\left(
\begin{array}{cc}
\check{\rho} \\
\theta
\end{array}
\right)\!\!
\rightarrow\!\!
\left(
\begin{array}{l}
(1+\chi_1)\,\check{\rho}+\sum_{k=1}^{[(m-2)/2]}\check{\varrho}_{2\,k+1}(\chi)\,\check{\rho}^{2\,k+1}
+\check{\varsigma}(\chi,\theta)\,\check{\rho}^{m-1}+O(\check{\rho}^{m})\\
\theta+\frac{2\,\pi\,n}{m}+\chi_2+\sum_{k=1}^{[(m-2)/2]}\tilde{\varrho}_{2\,k}(\chi)\,\check{\rho}^{2\,k+1}
+\tilde{\varsigma}(\chi,\theta)\,\check{\rho}^{m-2}+O(\check{\rho}^{m-1})
\end{array}
\right),
\end{aligned}
\label{a3}
\end{eqnarray}
where $\check{\varrho}_{2\,k+1}$, $\tilde{\varrho}_{2\,k}$ are both analytic functions independent of $\theta$, and
\begin{eqnarray*}
&&\check{\varsigma}(\chi,\theta)=\mathrm{Re}(\varsigma(\chi)\,\exp(-{\bf i}\,(m\,\theta+2\,\pi\,n/m+\chi_2))),\\
&&\tilde{\varsigma}(\chi,\theta)=\mathrm{Im}\left(\frac{\varsigma(\chi)\,\exp(-{\bf i}\,(m\,\theta+2\,\pi\,n/m+\chi_2))}{1+\chi_1}\right).
\end{eqnarray*}
One can compute that
\begin{eqnarray*}
&&\check{\varrho}_{3}(\chi)=\mathrm{Re}(\varrho_{2,1}\,\exp(-{\bf i}\,(2\,\pi\,n/m+\chi_2))),\\
&&\tilde{\varrho}_{2}(\chi)=\frac{\mathrm{Im}(\varrho_{2,1}\,\exp(-{\bf i}\,(2\,\pi\,n/m+\chi_2)))}{1+\chi_1}.
\end{eqnarray*}

By \cite [Theorem 2.4] {Whitley}, system \eqref{a3} has two orbits of period-$m$, one is attractive and the other is repellent on an invariant circle, produced from the Neimark-Sacker bifurcation if $\chi$ lies within a tongue $\mathcal{A}_{n/m}$ bounded by
\begin{eqnarray}
&&\chi_2\approx\frac{\tilde{\varrho}_{2}(0)\,\chi_1}{\check{\varrho}_{3}(0)}\pm\frac{|\varsigma(0)|\,\chi_1^{(m-2)/2}}{|\check{\varrho}_{3}(0)|^{(m-2)/2}}
\label{a4}
\end{eqnarray}
in the parameter $(\chi_1,\chi_2)$-plane. We further calculate
\begin{eqnarray*}
&&\check{\varrho}_{3}(0)=-\frac{\beta_{*}^{3}}{8 \left(\beta_{*}-2\right)^{3}}
=-\frac{\mu_{*}^{3}}{8},\\
&&\tilde{\varrho}_{2}(0)=\frac{\beta_{*}^{3} \left(2 \beta_{*}^{3}-8 \beta_{*}^{2}+3 \beta_{*}+11\right)}{8 \left(3 \beta_{*}-7\right) \sqrt{4 \beta_{*}-9}\, \left(\beta_{*}-2\right)^{3}}
=-\frac{\mu_{*}^{3} \left(\mu_{*}^{3}-13 \mu_{*}^{2}+39 \mu_{*}-11\right)}{8 \sqrt{\frac{-\mu_{*}+9}{\mu_{*}-1}}\, \left(\mu_{*}-1\right)^{2} \left(\mu_{*}-7\right)}.
\end{eqnarray*}
Besides, it follows from \eqref{a1} that $|t_{1}|=1+\chi_1$ and $\arg(t_{1})=2\pi n/m+\chi_2$. Making use of $\eqref{solu-1}$ again we have
\begin{eqnarray*}
&&|t_{1}|=\sqrt{\frac{\mu  \left(\beta -2\right)}{\beta}},\\
&&\arg(t_{1})=\arctan\left(\frac{\sqrt{4\beta^{2} \mu-4\beta^{2}-4\beta\mu-\mu^{2}}}{2 \beta-\mu}\right).
\end{eqnarray*}
Therefore, we get
\begin{eqnarray}
\chi_1=\sqrt{\frac{\mu  \left(\beta -2\right)}{\beta}}-1,
\label{a5}
\end{eqnarray}
\begin{eqnarray}
\chi_2=\arctan\left(\frac{\sqrt{4\beta^{2} \mu-4\beta^{2}-4\beta\mu-\mu^{2}}}{2 \beta-\mu}\right)-\frac{2\pi n}{m}.
\label{a6}
\end{eqnarray}
Substituting \eqref{a5} and \eqref{a6} into \eqref{a4}, we obtain $T_{\pm}$ and $\mathcal{A}_{n/m}$. The proof is completed.
\end{proof}
\begin{rmk}
The value $\varsigma(0)$ in Theorem \ref{th5-1} is the coefficient of the term $\bar{z}^{m-1}$ at $\chi=(0,0)$ in \eqref{a2}. Clearly, $\varsigma(\chi)$ varies with $m$ and in general the expression of $\varsigma(\chi)$ is simple when $m$ is small.
\end{rmk}


\section{Existence of Marotto's Chaos}
\setcounter{equation}{0}
\allowdisplaybreaks[4]
In this section, we will investigate the conditions given in \cite{Marotto,Marotto1978,Salman} such that system \eqref{eq2.1} possesses chaotic behaviors in the sense of Marotto.

A fixed point $E$  is referred as an {\it expanding fixed point} (see \cite{Chen,Marotto,Salman}) of $f:\mathbb{R}^n\rightarrow \mathbb{R}^n$, where $f$ is differentiable in a neighborhood of $E$ (denoted as $U(E)$) if the modules of all eigenvalues of $Df(
x)$ exceed 1 for all $x\in U(E)$.
\begin{defi}{\rm(\cite{Marotto,Salman})}
Let $E$ be an expanding fixed point of $f$ in $U(E)$, and suppose that there exists a point $x_0\in U(E)\backslash \{E\}$ such that $x_M=E$ and $|Df(x_k)|\ne0$ for $1\leq k\leq M$, where $x_k=f^k(x_0)$. Then $E$ is called a {\it snap-back} repeller of $f$.
\label{defi1}
\end{defi}

\begin{lm}{\rm(\cite{Marotto,Salman})}
If $f$ possesses a snap-back repeller, then $f$  is chaotic.
\label{th7}
\end{lm}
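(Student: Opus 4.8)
The plan is to establish the three standard hallmarks of Marotto chaos—periodic orbits of arbitrarily large period, an uncountable scrambled set, and sensitive dependence—by manufacturing a Smale-horseshoe-type invariant set out of the homoclinic structure encoded in the snap-back repeller. This is, in essence, Marotto's theorem, and I would follow the route through symbolic dynamics.

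First I would pass to an adapted (Lyapunov) norm. Since $E$ is an expanding fixed point in the sense preceding Definition \ref{defi1}, every eigenvalue of $Df(E)$ has modulus exceeding $1$; by a standard linear-algebra construction together with continuity of $Df$, there exist a norm $\|\cdot\|_{*}$, a radius $r_0>0$, and a constant $s>1$ with
\[
\|f(x)-f(y)\|_{*}\ge s\,\|x-y\|_{*}\qquad\text{for all }x,y\in B:=\{u:\|u-E\|_{*}\le r_0\}.
\]
This makes $f$ expanding and injective on $B$, with $f(B)\supseteq B$, so the branch of $f^{-1}$ fixing $E$ is a contraction of $B$ into itself. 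Its backward iterates of the snap-back point produce a sequence $x_{-1},x_{-2},\dots\to E$ with $f(x_{-k-1})=x_{-k}$. Together with the given finite forward segment $x_0,x_1,\dots,x_M=E$, this yields a genuine homoclinic orbit $\{x_k\}_{k=-\infty}^{M}$ to $E$: backward-asymptotic to $E$ and reaching $E$ in finite forward time.

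Next, using the nondegeneracy hypothesis $|Df(x_k)|\ne 0$ for $1\le k\le M$, I would invoke the inverse function theorem at each point of the finite segment to obtain local $C^1$ inverse branches $g_k$ of $f$ near $x_k$. Composing these with the contracting inverse branch near $E$ shows that, for all sufficiently large $n$, the return map $F:=f^{\,M+n}$ is defined on a small neighborhood of $E$ and maps it diffeomorphically, with strong expansion, back across itself. I would then select two disjoint closed sets $W_0,W_1\subset B$ near suitable orbit points so that $F(W_i)$ stretches completely across $W_0\cup W_1$ for $i=0,1$. The maximal invariant set $\Lambda=\bigcap_{j\in\mathbb{Z}}F^{-j}(W_0\cup W_1)$ is then a Cantor set, and the expansion estimates furnish a topological conjugacy between $F|_{\Lambda}$ and the two-sided shift $\sigma$ on $\{0,1\}^{\mathbb{Z}}$. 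Finally I would transfer chaos back to $f$: periodic sequences under $\sigma$ give periodic points of $F$, hence of $f$, of every sufficiently large period; the mixing of $\sigma$ supplies an uncountable scrambled set together with the Li--Yorke $\limsup/\liminf$ dichotomy, which pulls back through the conjugacy and the fixed iterate $F=f^{M+n}$.

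The main obstacle is the geometric control in the horseshoe construction: one must guarantee that the uniform expansion of the first step dominates the bounded distortion accumulated along the finite arc $x_0\mapsto\cdots\mapsto x_M=E$, so that the images $F(W_i)$ genuinely cross the domains and the full two-symbol shift—rather than a mere subshift—is realized. Choosing $n$ large amplifies the expansion enough to absorb this finite distortion, and this is precisely where the snap-back hypothesis (as opposed to an isolated expanding fixed point) is indispensable.
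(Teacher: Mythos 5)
Your overall route---adapted norm at the repeller, a genuine homoclinic orbit, inverse branches along the finite snap-back segment, a strongly expanding return map $F=f^{M+n}$, a two-symbol horseshoe conjugate to the full shift, and transfer of chaos back to $f$---is a legitimate and well-documented way to obtain this result; it is the symbolic-dynamics proof found in later treatments of snap-back repellers, not the Brouwer/covering-sequence argument of Marotto's original paper \cite{Marotto1978}, and it would in fact deliver more (full-shift, Devaney-type dynamics for an iterate) than the Li--Yorke-type conclusion the lemma asserts. Note, however, that there is no in-paper proof to compare against: Lemma~\ref{th7} is imported verbatim from \cite{Marotto,Salman}, so your proposal stands or falls on its own.

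It falls at one specific, historically notorious point. Definition~\ref{defi1} and the paragraph preceding it (following \cite{Marotto1978}) assume only that every eigenvalue of $Df(x)$ has modulus greater than $1$ at each point $x\in U(E)$. This pointwise spectral hypothesis does \emph{not} yield your displayed estimate $\|f(x)-f(y)\|_{*}\ge s\|x-y\|_{*}$ on $U(E)$ for any single norm: eigenvalues do not bound $\|Df(x)v\|$ from below (a matrix with both eigenvalues of modulus $1.1$ can shrink suitable vectors by a large factor), and the adapted norm that works for one $Df(x)$ varies with $x$. What your first step legitimately produces is an adapted norm and radius coming from $Df(E)$ alone, i.e.\ uniform expansion on some ball $B$ that may be strictly smaller than $U(E)$. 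But the snap-back point $x_0$ is only hypothesized to lie in $U(E)\setminus\{E\}$; nothing places it in $B$. Your next step---applying the contracting inverse branch repeatedly to $x_0$ to produce $x_{-k}\to E$---and, later, the covering requirement $f^{n}(W_0)\supset W_0\cup W_1$ both silently assume $x_0\in B$. With the definition exactly as stated these steps can fail, and this is precisely the flaw in the 1978 argument that forced the cited ``redefinition'' paper \cite{Marotto}, where the repelling neighborhood is required to be one on which a norm-expansion estimate actually holds and $x_0$ is required to lie in that neighborhood. So you must either take the corrected definition as the hypothesis (which the paper's citation arguably intends) or add an argument relocating the snap-back structure inside $B$; neither is automatic. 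A smaller secondary gap: a conjugacy of the single iterate $F=f^{M+n}$ with the full shift gives $f$-periodic points whose periods \emph{divide} $p(M+n)$, not points of every sufficiently large period; Marotto's conclusion on periods needs loops of different lengths through the homoclinic structure (the one-step loop $f(W_0)\supset W_0$ at $E$ together with the length-$(M+n)$ loop through $x_0$), plus the observation that an orbit visiting $W_1$ exactly once per cycle has exact period equal to the cycle length.
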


Based on the above description, we first investigate the conditions so that $E_2$ is an expanding fixed point of system \eqref{eq2.1}. The characteristic equation at the fixed point $E_2: (x^*, y^*,z^*)$ can be formulated as
\begin{equation}
\lambda^3 + \mathcal{M}(x^*, y^*, z^*)\lambda^2 + \mathcal{N}(x^*, y^*, z^*)\lambda + \mathcal{W}(x^*, y^*, z^*) = 0,
\label{marotto1}
\end{equation}
with coefficients defined by
\begin{small}
\begin{eqnarray*}
&&\!\!\!\!\mathcal {M}=\left(2 x^*+y^*+z^*-1\right) \mu -\beta  \left(x^*-z^*\right)-\lambda  y^*,
\\
&&\!\!\!\!\mathcal {N}=\left(\left(y^* z^*-2 \left(x^*+\frac{z^*}{2}-\frac{1}{2}\right) \left(x^*-z^*\right)\right) \beta -2 \lambda  \mu y^* \left(x^*+\frac{y^*}{2}+\frac{z^*}{2}-\frac{1}{2}\right)\right) +\beta  \lambda  y^* x^*,
\\
&&\!\!\!\!\mathcal {W}=2 \lambda \mu \beta  y^* x^* \left(x^*+z^*-\frac{1}{2}\right).
\end{eqnarray*}
\end{small}Through the substitution $\mathcal {Z}=\lambda+\mathcal {M}/3$, we transform \eqref{marotto1} into its depressed cubic form:
\begin{eqnarray}
\mathcal {Z}^3+\left(\mathcal {N}-\frac{\mathcal {M}^2}{3} \right)\mathcal {Z}+\mathcal {W}+\frac{2\mathcal {M}^3}{27}-\frac{\mathcal {N}\mathcal {W}}{3}
:=
\mathcal {Z}^3+\mathscr{M}\mathcal {Z}+\mathscr{N}=0.
\label{marotto2}
\end{eqnarray}
Define the discriminant of the depressed cubic as
\begin{equation*}
\bar{\Delta} := \left(\frac{\mathscr{N}}{2}\right)^2 + \left(\frac{\mathscr{M}}{3}\right)^3.
\end{equation*}
For the case $\bar{\Delta}>0$, the depressed cubic equation \eqref{marotto2} possesses:
\begin{itemize}
\item A unique real root:
\begin{eqnarray*}
\mathcal {Z}_1=\sqrt[3]{-\frac{\mathscr{N}}{2}+\sqrt{\bar{\Delta}}}
+\sqrt[3]{-\frac{\mathscr{N}}{2}-\sqrt{\bar{\Delta}}}.
\end{eqnarray*}
\item A pair of complex conjugate roots:
\begin{eqnarray*}
\mathcal {Z}_2=-\frac{1}{2}\mathcal {Z}_1+\frac{\sqrt{3}}{2}{\bf{i}}\mathcal {Z}_1,~~~
\mathcal {Z}_3=-\frac{1}{2}\mathcal {Z}_1-\frac{\sqrt{3}}{2}{\bf{i}}\mathcal {Z}_1.
\end{eqnarray*}
\end{itemize}
Consequently, the original characteristic equation \eqref{marotto1} admits eigenvalues
\begin{eqnarray*}
\lambda_1=\mathcal {Z}_1-\frac{\mathcal {M}^2}{3},~~~\lambda_2=\mathcal {Z}_2-\frac{\mathcal {M}^2}{3},~~~\lambda_3=\mathcal {Z}_3-\frac{\mathcal {M}^2}{3}.
\end{eqnarray*}
The critical expanding condition requires all eigenvalues to satisfy $\lambda_i>1$, which consequently implies
\begin{small}
\begin{eqnarray*}
\left\{\begin{array}{ll}
\left(\frac{\mathcal {W}(x^*, y^*,z^*)}{2}+\frac{\mathcal {M}^3(x^*, y^*,z^*)}{27}-\frac{\mathcal {M}(x^*, y^*,z^*)\mathcal {N}(x^*, y^*,z^*)}{6}\right)^2+\left(\frac{\mathcal {N}(x^*, y^*,z^*)}{3}-\frac{\mathcal {M}^2(x^*, y^*,z^*)}{9}\right)^3>0,
\\
\left\{\frac{\mathcal {W}(x^*, y^*,z^*)}{2}+\frac{\mathcal {M}^3(x^*, y^*,z^*)}{27}-\frac{\mathcal {M}(x^*, y^*,z^*)\mathcal {N}(x^*, y^*,z^*)}{6}\right.\\
+\left.\left({\left(\frac{\mathcal {W}(x^*, y^*,z^*)}{2}+\frac{\mathcal {M}^3(x^*, y^*,z^*)}{27}-\frac{\mathcal {M}(x^*, y^*,z^*)\mathcal {N}(x^*, y^*,z^*)}{6}\right)^2+\left(\frac{\mathcal {N}(x^*, y^*,z^*)}{3}-\frac{\mathcal {M}^2(x^*, y^*,z^*)}{9}\right)^3}\right)^{\frac{1}{2}}\right\}^{\frac{1}{3}}\\
+\left\{\frac{\mathcal {W}(x^*, y^*,z^*)}{2}+\frac{\mathcal {M}^3(x^*, y^*,z^*)}{27}-\frac{\mathcal {M}(x^*, y^*,z^*)\mathcal {N}(x^*, y^*,z^*)}{6}\right.\\
-\left.\left({\left(\frac{\mathcal {W}(x^*, y^*,z^*)}{2}+\frac{\mathcal {M}^3(x^*, y^*,z^*)}{27}-\frac{\mathcal {M}(x^*, y^*,z^*)\mathcal {N}(x^*, y^*,z^*)}{6}\right)^2+\left(\frac{\mathcal {N}(x^*, y^*,z^*)}{3}-\frac{\mathcal {M}^2(x^*, y^*,z^*)}{9}\right)^3}\right)^{\frac{1}{2}}\right\}^{\frac{1}{3}}\\
>\max\left\{1+\frac{\mathcal {M}(x^*, y^*,z^*)}{3},-\frac{\mathcal {M}(x^*, y^*,z^*)}{6}+\sqrt{1-\frac{\mathcal {M}^2(x^*, y^*,z^*)}{12}}\right\}
\\
or
\\
\left\{\frac{\mathcal {W}(x^*, y^*,z^*)}{2}+\frac{\mathcal {M}^3(x^*, y^*,z^*)}{27}-\frac{\mathcal {M}(x^*, y^*,z^*)\mathcal {N}(x^*, y^*,z^*)}{6}\right.\\
+\left.\left({\left(\frac{\mathcal {W}(x^*, y^*,z^*)}{2}+\frac{\mathcal {M}^3(x^*, y^*,z^*)}{27}-\frac{\mathcal {M}(x^*, y^*,z^*)\mathcal {N}(x^*, y^*,z^*)}{6}\right)^2+\left(\frac{\mathcal {N}(x^*, y^*,z^*)}{3}-\frac{\mathcal {M}^2(x^*, y^*,z^*)}{9}\right)^3}\right)^{\frac{1}{2}}\right\}^{\frac{1}{3}}\\
+\left\{\frac{\mathcal {W}(x^*, y^*,z^*)}{2}+\frac{\mathcal {M}^3(x^*, y^*,z^*)}{27}-\frac{\mathcal {M}(x^*, y^*,z^*)\mathcal {N}(x^*, y^*,z^*)}{6}\right.\\
-\left.\left({\left(\frac{\mathcal {W}(x^*, y^*,z^*)}{2}+\frac{\mathcal {M}^3(x^*, y^*,z^*)}{27}-\frac{\mathcal {M}(x^*, y^*,z^*)\mathcal {N}(x^*, y^*,z^*)}{6}\right)^2+\left(\frac{\mathcal {N}(x^*, y^*,z^*)}{3}-\frac{\mathcal {M}^2(x^*, y^*,z^*)}{9}\right)^3}\right)^{\frac{1}{2}}\right\}^{\frac{1}{3}}\\
<\min\left\{\frac{\mathcal {M}(x^*, y^*,z^*)}{3}-1,-\frac{\mathcal {M}(x^*, y^*,z^*)}{6}-\sqrt{1-\frac{\mathcal {M}^2(x^*, y^*,z^*)}{12}}\right\}.
\end{array}\right.
\end{eqnarray*}
\end{small}
Define the parameter domains:
\begin{small}
\begin{eqnarray*}
&&D_1:=\left\{(x,y,z)\bigg|\left(\frac{\mathcal {W}(x, y,z)}{2}+\frac{\mathcal {M}^3(x, y,z)}{27}-\frac{\mathcal {M}(x, y,z)\mathcal {N}(x, y,z)}{6}\right)^2\right.\\
&&~~~~~+\left.\left(\frac{\mathcal {N}(x, y,z)}{3}-\frac{\mathcal {M}^2(x, y,z)}{9}\right)^3>0\right\},\\
&&D_2:=\left\{(x,y,z)\bigg|\left\{\frac{\mathcal {W}(x, y,z)}{2}+\frac{\mathcal {M}^3(x, y,z)}{27}-\frac{\mathcal {M}(x, y,z)\mathcal {N}(x, y,z)}{6}\right.\right.\\
&&~~~~~+\left(\left(\frac{\mathcal {W}(x, y,z)}{2}+\frac{\mathcal {M}^3(x, y,z)}{27}-\frac{\mathcal {M}(x, y,z)\mathcal {N}(x, y,z)}{6}\right)^2\right.\\
&&~~~~~+\left.\left.\left(\frac{\mathcal {N}(x, y,z)}{3}-\frac{\mathcal {M}^2(x, y,z)}{9}\right)^3\right)^{\frac{1}{2}}\right\}^{\frac{1}{3}}\\
&&~~~~~+\left\{\frac{\mathcal {W}(x, y,z)}{2}+\frac{\mathcal {M}^3(x, y,z)}{27}-\frac{\mathcal {M}(x, y,z)\mathcal {N}(x, y,z)}{6}\right.\\
&&~~~~~-\left(\left(\frac{\mathcal {W}(x, y,z)}{2}+\frac{\mathcal {M}^3(x, y,z)}{27}-\frac{\mathcal {M}(x, y,z)\mathcal {N}(x, y,z)}{6}\right)^2\right.\\
&&~~~~~+\left.\left.\left(\frac{\mathcal {N}(x, y,z)}{3}-\frac{\mathcal {M}^2(x, y,z)}{9}\right)^3\right)^{\frac{1}{2}}\right\}^{\frac{1}{3}}\\
&&~~~~~>\max\left.\left\{1+\frac{\mathcal {M}(x, y,z)}{3},-\frac{\mathcal {M}(x, y,z)}{6}+\sqrt{1-\frac{\mathcal {M}^2(x, y,z)}{12}}\right\}\right\},\\
&&D_3:=\left\{(x,y,z)\bigg|\left\{\frac{\mathcal {W}(x, y,z)}{2}+\frac{\mathcal {M}^3(x, y,z)}{27}-\frac{\mathcal {M}(x, y,z)\mathcal {N}(x, y,z)}{6}\right.\right.\\
&&~~~~~+\left(\left(\frac{\mathcal {W}(x, y,z)}{2}+\frac{\mathcal {M}^3(x, y,z)}{27}-\frac{\mathcal {M}(x, y,z)\mathcal {N}(x, y,z)}{6}\right)^2\right.\\
&&~~~~~+\left.\left.\left(\frac{\mathcal {N}(x, y,z)}{3}-\frac{\mathcal {M}^2(x, y,z)}{9}\right)^3\right)^{\frac{1}{2}}\right\}^{\frac{1}{3}}\\
&&~~~~~+\left\{\frac{\mathcal {W}(x, y,z)}{2}+\frac{\mathcal {M}^3(x, y,z)}{27}-\frac{\mathcal {M}(x, y,z)\mathcal {N}(x, y,z)}{6}\right.\\
&&~~~~~-\left(\left(\frac{\mathcal {W}(x, y,z)}{2}+\frac{\mathcal {M}^3(x, y,z)}{27}-\frac{\mathcal {M}(x, y,z)\mathcal {N}(x, y,z)}{6}\right)^2\right.\\
&&~~~~~+\left.\left.\left(\frac{\mathcal {N}(x, y,z)}{3}-\frac{\mathcal {M}^2(x, y,z)}{9}\right)^3\right)^{\frac{1}{2}}\right\}^{\frac{1}{3}}\\
&&~~~~~<\min\left.\left\{\frac{\mathcal {M}(x, y,z)}{3}-1,-\frac{\mathcal {M}(x, y,z)}{6}-\sqrt{1-\frac{\mathcal {M}^2(x, y,z)}{12}}\right\}\right\},
\end{eqnarray*}
\end{small}
we come to the following conclusion.
\begin{lm}
If $(x,y,z)\in{D_1\cap D_2}$ or $(x,y,z)\in{D_1\cap D_3}$, then $\bar{\Delta}>0$ and $|\lambda_{1,2,3}|>1$.
In addition, if the fixed point $E_2: (x^*,y^*,z^*)=(1/\beta,(\beta  \mu -\beta -\mu)/\beta  \mu,0)$ of system \eqref{eq2.1} satisfies
$$
E_2:(x^*,y^*,z^*)\in U(E_2):=\{(x,y,z)|(x,y,z)\in{D_1\cap D_2}~or~(x,y,z)\in{D_1\cap D_3}\},
$$
then $E_2:(x^*,y^*,z^*)$ is an expanding fixed point of $U(E_2)$.
\label{lmchaos}
\end{lm}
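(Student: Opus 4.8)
The plan is to read off the three eigenvalues of $Df(x,y,z)$ from Cardano's formula in the regime $\bar{\Delta}>0$, to convert the modulus conditions $|\lambda_i|>1$ into the explicit semi-algebraic constraints defining $D_1,D_2,D_3$, and then to deduce the statement about $E_2$ by a short openness argument.

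First I would record that $\mathcal{M}(x,y,z)$, $\mathcal{N}(x,y,z)$, $\mathcal{W}(x,y,z)$ are exactly the coefficients of the characteristic polynomial \eqref{marotto1} of $Df$ at the \emph{general} point $(x,y,z)$ (for instance $\mathcal{M}=-\operatorname{tr}Df(x,y,z)$), so that the three regions genuinely constrain the spectrum of $Df$ at every point of the set, not merely at one fixed point. By the very definition of $D_1$, a point lies in $D_1$ precisely when $\bar{\Delta}>0$; in that case the depressed cubic \eqref{marotto2} has the single real root $\mathcal{Z}_1$ given by Cardano's formula and the conjugate pair $\mathcal{Z}_{2,3}$, and the eigenvalues are recovered via the back-substitution $\lambda_i=\mathcal{Z}_i-\mathcal{M}/3$ (consistent with the thresholds appearing in $D_2,D_3$).

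The central computation is to show that $|\lambda_{2,3}|^{2}=\mathcal{Z}_1^{2}+\tfrac{\mathcal{M}}{3}\mathcal{Z}_1+\tfrac{\mathcal{M}^{2}}{9}$, a quadratic in $\mathcal{Z}_1$ whose two zeros are $-\tfrac{\mathcal{M}}{6}\pm\sqrt{1-\mathcal{M}^{2}/12}$; hence $|\lambda_{2,3}|>1$ is equivalent to $\mathcal{Z}_1>-\tfrac{\mathcal{M}}{6}+\sqrt{1-\mathcal{M}^{2}/12}$ or $\mathcal{Z}_1<-\tfrac{\mathcal{M}}{6}-\sqrt{1-\mathcal{M}^{2}/12}$. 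Combining this with the elementary equivalences $\lambda_1>1\Leftrightarrow\mathcal{Z}_1>1+\tfrac{\mathcal{M}}{3}$ and $\lambda_1<-1\Leftrightarrow\mathcal{Z}_1<\tfrac{\mathcal{M}}{3}-1$ for the real root, I would verify that membership in $D_2$ forces $\mathcal{Z}_1$ above \emph{both} thresholds (so $\lambda_1>1$ and $|\lambda_{2,3}|>1$), whereas membership in $D_3$ forces $\mathcal{Z}_1$ below both (so $\lambda_1<-1$ and $|\lambda_{2,3}|>1$); in either case all three moduli exceed $1$, which is the first assertion. For the final statement, observe that $D_1,D_2,D_3$ are cut out by strict inequalities among continuous functions of $(x,y,z)$, hence are open, so $U(E_2)=(D_1\cap D_2)\cup(D_1\cap D_3)$ is open; if $E_2\in U(E_2)$ then $U(E_2)$ is an open neighborhood of $E_2$ on which every Jacobian is expanding, and by the definition of an expanding fixed point stated before Definition \ref{defi1}, $E_2$ is an expanding fixed point.

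The main obstacle is bookkeeping rather than conceptual: cleanly deriving $|\lambda_{2,3}|^{2}=\mathcal{Z}_1^{2}+\tfrac{\mathcal{M}}{3}\mathcal{Z}_1+\tfrac{\mathcal{M}^{2}}{9}$ from the Cardano expressions, and handling the degenerate range $\mathcal{M}^{2}\ge 12$, where the discriminant $4-\mathcal{M}^{2}/3$ of that quadratic is nonpositive. In that range the quadratic has no real zero and is strictly positive, so $|\lambda_{2,3}|>1$ holds automatically and the $\sqrt{1-\mathcal{M}^{2}/12}$ entry of the $\max$/$\min$ becomes vacuous; I would treat this case separately so that the characterization of $D_2$ and $D_3$ remains valid throughout and the two assertions of the lemma are obtained without gaps.
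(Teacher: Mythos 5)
Your outline coincides with what the paper actually does: the paper offers no separate proof of this lemma, which simply packages the derivation preceding it (depressed cubic, Cardano expression for $\mathcal{Z}_1$, back-substitution, and the two threshold conditions defining $D_2$ and $D_3$), and your reading of $D_1$ as $\{\bar{\Delta}>0\}$, of the two $\max/\min$ thresholds as ``real eigenvalue beyond $\pm1$'' plus ``conjugate pair off the unit circle,'' and your closing openness argument are exactly what is intended there. Your separate treatment of the range $\mathcal{M}^2\geq 12$, and your silent correction of the paper's back-substitution $\lambda_i=\mathcal{Z}_i-\mathcal{M}^2/3$ to $\lambda_i=\mathcal{Z}_i-\mathcal{M}/3$, are both refinements the paper omits.

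However, the step you single out as the ``central computation'' cannot be completed as planned, and the defect is inherited from the paper itself. The identity $|\lambda_{2,3}|^2=\mathcal{Z}_1^2+\tfrac{\mathcal{M}}{3}\mathcal{Z}_1+\tfrac{\mathcal{M}^2}{9}$ is what comes out if you take the paper's displayed conjugate roots $\mathcal{Z}_{2,3}=-\tfrac{1}{2}\mathcal{Z}_1\pm\tfrac{\sqrt{3}}{2}{\bf i}\,\mathcal{Z}_1$ at face value; but those are not the roots of the depressed cubic. By Vieta applied to $\mathcal{Z}^3+\mathscr{M}\mathcal{Z}+\mathscr{N}=0$ (no quadratic term), the conjugate pair satisfies $\mathcal{Z}_2+\mathcal{Z}_3=-\mathcal{Z}_1$ and $\mathcal{Z}_2\mathcal{Z}_3=\mathcal{Z}_1^2+\mathscr{M}$, so the true identity is
\[
|\lambda_{2,3}|^2=\mathcal{Z}_1^2+\frac{\mathcal{M}}{3}\mathcal{Z}_1+\frac{\mathcal{M}^2}{9}+\mathscr{M},
\]
whereas the paper's root formula forces $|\mathcal{Z}_{2,3}|=|\mathcal{Z}_1|$, which contradicts Vieta unless $\mathscr{M}=0$. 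Consequently the correct thresholds are $-\mathcal{M}/6\pm\sqrt{1-\mathcal{M}^2/12-\mathscr{M}}$ rather than $-\mathcal{M}/6\pm\sqrt{1-\mathcal{M}^2/12}$, and membership in $D_1\cap D_2$ or $D_1\cap D_3$ as literally defined does \emph{not} guarantee $|\lambda_{2,3}|>1$ when $\mathscr{M}<0$; your bookkeeping, if done from the genuine Cardano expressions, would expose exactly this discrepancy rather than close it. If you want a short argument that is actually correct, bypass explicit root formulas altogether: since $\lambda_1\lambda_2\lambda_3=-\mathcal{W}$ and $\lambda_2=\bar{\lambda}_3$, one has $|\lambda_{2,3}|^2=-\mathcal{W}/\lambda_1>0$, so the expanding condition is equivalent to $|\lambda_1|>1$ together with $|\mathcal{W}|>|\lambda_1|$; this is the pair of inequalities the regions $D_2$, $D_3$ ought to encode, and the rest of your argument (openness of the regions, hence $U(E_2)$ a neighborhood on which every Jacobian is expanding) then goes through unchanged.
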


Next, our aim is finding a point $E': (x',y',z')\in U(E_2)\backslash \{E_2\}$ satisfying $|DF_{\lambda,\mu,\beta}^N(E')|\ne 0$ for certain positive integer $N$.

Suppose that
\begin{eqnarray}
\small
\left\{\begin{array}{ll}
x''=\mu x' \left(1-x'-y'-z'\right),  \\
y''=\beta y' \left(x'-z'\right),  \\
z''=\lambda y' z'
\end{array}\right.
\label{fixPchaos1}
\end{eqnarray}
and
\begin{eqnarray}
\small
\left\{\begin{array}{ll}
x^*=\mu x'' \left(1-x''-y''-z''\right),  \\
y^*=\beta y'' \left(x''-z''\right),  \\
z^*=\lambda y'' z''.
\end{array}\right.
\label{fixPchaos2}
\end{eqnarray}
This implies that $F^2_{\lambda,\mu,\beta}$ maps $E': (x',y',z')$ to the fixed point $E_2: (x^*,y^*,z^*)$ if there exist another solutions different from $E_2$ to equations \eqref{fixPchaos1}-\eqref{fixPchaos2}. Actually, from \eqref{fixPchaos2} we find that the solutions satisfy the following equations
\begin{eqnarray}
\small
\left\{\begin{array}{ll}
x''=\frac{1}{\mu} x^*+x''^2+\frac{1}{\mu}x''y''+x''z'',  \\
y''=\frac{y^*}{\beta(x''-z'')},  \\
z''=\frac{z^*}{\lambda y''}.
\end{array}\right.
\label{fixPchaos21}
\end{eqnarray}
In order to solve $x'$, $y'$, $z'$, substituting \eqref{fixPchaos1}-(\ref{fixPchaos2}) into (\ref{fixPchaos21}) and we get
\begin{eqnarray}
\left\{\begin{array}{ll}
x'=\frac{1-\mathit{x'} \left(-1+\mathit{x'}+\mathit{y'}+\mathit{z'}\right) \beta^{2} \mathit{y'} \left(\mathit{x'}-\mathit{z'}\right) \mu +\mu  \left(1+\mathit{x'} \left(-1+\mathit{x'}+\mathit{y'}+\mathit{z'}\right)^{2} \mu^{2}-\left(\lambda  \mathit{y'} \mathit{z'}-1\right) \left(-1+\mathit{x'}+\mathit{y'}+\mathit{z'}\right) \mu \right) \mathit{x'} \beta}{\mu  \beta},
\\
y'=\frac{-\beta  \mu +\beta +\mu +\left(-\beta  \mathit{y'} \left(\mathit{x'}-\mathit{z'}\right)+\mathit{y'}\right) \mu  \,\beta^{2} \left(\mathit{x'} \left(-1+\mathit{x'}+\mathit{y'}+\mathit{z'}\right) \mu +\lambda  \mathit{y'} \mathit{z'}\right)}{\mu  \,\beta^{2} \left(\mathit{x'} \left(-1+\mathit{x'}+\mathit{y'}+\mathit{z'}\right) \mu +\lambda  \mathit{y'} \mathit{z'}\right)},
\\
z'=0.
\end{array}\right.
\label{fixPchaos11}
\end{eqnarray}
One can calculate that
\begin{eqnarray*}
|DF_{\lambda,\mu,\beta}^2(E')|=
\mathcal{A}_{11}\mathcal{A}_{22}\mathcal{A}_{33}+\mathcal{A}_{13}\mathcal{A}_{21}\mathcal{A}_{32}
-\mathcal{A}_{12}\mathcal{A}_{21}\mathcal{A}_{33}-\mathcal{A}_{11}\mathcal{A}_{23}\mathcal{A}_{32},
\end{eqnarray*}
where
\begin{small}
\begin{eqnarray*}
\!\!\!\!\!\!\!\!\!\!&&\mathcal{A}_{11}=\mu  \left((1+\mu^{2} x' \left(-1+x'+y'+z'\right) \left(\mu  x'^{2}+\left(\left(-1+y'+z'\right) \mu -\beta  y'\right) x'+1+z' \left(\beta -\lambda \right) y'\right)\right)
 \\
\!\!\!\!\!\!\!\!\!\!&&~~~~~~~+\mu  \left(y' \beta^{2} \left(\mu  x'^{2}+\left(-1+y'+z'\right) \mu  x'+\lambda  y' z'\right) \left(x'-z'\right)-\lambda^{2} \beta  y'^{2} \left(x'-z'\right) z'\right)
\\
\!\!\!\!\!\!\!\!\!\!&&~~~~~~~+\mu^{3} x' \left(-1+x'+y'+z'\right) \left(\mu  x'^{2}+\left(\left(-1+y'+z'\right) \mu -\beta  y'\right) x'+1+z' \left(\beta -\lambda \right) y'\right),
\\
\!\!\!\!\!\!\!\!\!\!&&\mathcal{A}_{12}=\mathcal{A}_{13}=\mu^{3} x' \left(-1+x'+y'+z'\right) \left(\mu  x'^{2}+\left(\left(-1+y'+z'\right) \mu -\beta  y'\right) x'+1+z' \left(\beta -\lambda \right) y'\right),
\\
\!\!\!\!\!\!\!\!\!\!&&\mathcal{A}_{21}=-\beta^{3} y' \left(\mu  x'^{2}+\left(-1+y'+z'\right) \mu  x'+\lambda  y' z'\right) \left(x'-z'\right),
\\
\!\!\!\!\!\!\!\!\!\!&&\mathcal{A}_{22}=\beta  \left(-\mu^{2} x' \left(-1+x'+y'+z'\right) \left(\mu  x'^{2}+\left(\left(-1+y'+z'\right) \mu -\beta  y'\right) x'+1+z' \left(\beta -\lambda \right) y'\right)\right)
\\
\!\!\!\!\!\!\!\!\!\!&&~~~~~~~+\beta  \left(-\lambda^{2} \beta  y'^{2} \left(x'-z'\right) z'\right),
\\
\!\!\!\!\!\!\!\!\!\!&&\mathcal{A}_{23}=\beta^{3} y' \left(\mu  x'^{2}+\left(-1+y'+z'\right) \mu  x'+\lambda  y' z'\right) \left(x'-z'\right),
\\
\!\!\!\!\!\!\!\!\!\!&&\mathcal{A}_{31}=0,
\\
\!\!\!\!\!\!\!\!\!\!&&\mathcal{A}_{32}=\lambda^{3} \beta  y'^{2} \left(x'-z'\right) z',
\\
\!\!\!\!\!\!\!\!\!\!&&\mathcal{A}_{33}=-\lambda  y' \beta^{2} \left(\mu  x'^{2}+\left(-1+y'+z'\right) \mu  x'+\lambda  y' z'\right) \left(x'-z'\right).
\end{eqnarray*}
\end{small}
\vspace{-0.8cm}

Given the nature of expanding fixed point $E_2: (x^*,y^*,z^*)$ under mapping $F$, i.e., $|\lambda_{1,2,3}|>1$,we derive the following parameter constraints:
\begin{eqnarray*}
\left\{(\lambda,\mu,\beta)\bigg|
\left|\frac{\lambda(\beta\mu - \beta - \mu)}{\beta\mu}\right| > 1,
\frac{(-4\mu + 4)\beta^2 + 4\beta\mu + \mu^2}{\beta^2} < 0,
\frac{\mu(\beta - 2)}{\beta} > 1\right\}.
\end{eqnarray*}
Employing the theory of complete discrimination system to solve the corresponding semi-algebraic system, we establish the parameter region
\begin{eqnarray*}
\mathfrak{W}=\left\{(\lambda,\mu,\beta)\bigg|
\beta >\frac{9}{4},\frac{\beta}{\beta -2}<\mu <{2 \beta^{2}-2 \beta +2 \sqrt{\beta^{4}-2 \beta^{3}}}\,,\lambda>\frac{\mu  \beta}{\mu  \beta -\mu -\beta}\right\}
\end{eqnarray*}
\vspace{-0.5cm}

Finally, when the conditions in Lemma~\ref{lmchaos} are satisfied, equations \eqref{fixPchaos21}-\eqref{fixPchaos11} indicate that $E', E''\ne E_2$, $E'\in U(E_2)$ and $|DF_{\lambda,\mu,\beta}^2(E')|\ne 0$, implying $E_2$ is a snap-back repeller of $F$. Consequently, the following theorem is obtained.
\begin{thm}
Suppose the conditions in Lemma~\ref{lmchaos} and further the following conditions
\begin{description}
\item[(i)] $(\lambda,\mu,\beta)\in\mathfrak{W}$,
\item[(ii)] The real solutions of equations \eqref{fixPchaos21}-\eqref{fixPchaos11} are different from $E_2$, i.e., $(x',y',z'), (x'',y'',z'')\ne (x^*,y^*,z^*)$. Moreover, $E'(x',y',z')\in U(E_2)$, $(x',y',z')\ne(0,0,0)$ and $|DF_{\lambda,\mu,\beta}^2(E')|\ne 0$
\end{description}
hold. Then $E_2$ is a snap-back repeller of system \eqref{eq2.1} and system \eqref{eq2.1} possesses chaotic behaviors in the sense of Marotto.
\label{E2hd}
\end{thm}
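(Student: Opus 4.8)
The plan is to verify, one by one, the three ingredients in Definition~\ref{defi1} of a snap-back repeller for the fixed point $E_2$, and then to invoke Lemma~\ref{th7} to conclude chaos in the sense of Marotto. The key observation is that almost all of the analytic labor has already been front-loaded into Lemma~\ref{lmchaos} and into the explicit preimage and Jacobian computations preceding the statement, so the proof is essentially an assembly argument that matches those pieces to the hypotheses of the definition.

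First I would establish that $E_2$ is an expanding fixed point. This is precisely the content of Lemma~\ref{lmchaos}: under the standing hypotheses we have $E_2:(x^*,y^*,z^*)\in U(E_2)$, and for every point of $U(E_2)$ the discriminant $\bar\Delta>0$ together with $|\lambda_{1,2,3}|>1$, so all three eigenvalues of $DF$ exceed one in modulus throughout $U(E_2)$. I would note that condition~(i), $(\lambda,\mu,\beta)\in\mathfrak{W}$, is exactly the parameter region obtained from the complete discrimination system encoding $|\lambda_{1,2,3}(E_2)|>1$; hence it is consistent with $E_2$ itself being repelling, and in particular $|DF(E_2)|=|\lambda_1\lambda_2\lambda_3|\neq0$.

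Next I would produce the snapping-back orbit with $M=2$. Equations~\eqref{fixPchaos1}--\eqref{fixPchaos2} encode $F(E')=E''$ and $F(E'')=E_2$, so that $F^2(E')=E_2$; inverting $F$ stepwise yields the self-consistent systems~\eqref{fixPchaos21} and~\eqref{fixPchaos11}, whose real solutions furnish the preimages $E''$ and $E'$. Condition~(ii) then supplies the remaining requirements of Definition~\ref{defi1}: the solutions are distinct from $E_2$ and $E'\in U(E_2)\setminus\{E_2\}$ is nontrivial, while the chain rule $DF^2(E')=DF(E'')\,DF(E')$ gives $|DF^2(E')|=|DF(E'')|\,|DF(E')|$, so that $|DF^2(E')|\neq0$ forces $|DF(E')|\neq0$ and $|DF(E'')|\neq0$. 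Combined with $|DF(E_2)|\neq0$ from the expanding property, the Jacobian is nonsingular at every point of the orbit $E'\to E''\to E_2$, matching $|Df(x_k)|\neq0$ for $1\le k\le M$. All conditions of Definition~\ref{defi1} then hold, so $E_2$ is a snap-back repeller and Lemma~\ref{th7} delivers Marotto chaos.

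The genuine difficulty, which the theorem shifts into hypothesis~(ii), is the \emph{existence} of a snap-back point: one must guarantee that the formal inverse equations~\eqref{fixPchaos21}--\eqref{fixPchaos11} actually admit a real solution that is both distinct from $E_2$ and located inside the expanding neighborhood $U(E_2)$, and that the determinant $|DF^2(E')|$ built from the entries $\mathcal{A}_{ij}$ does not vanish there. Solvability inside $U(E_2)$ is the crux of Marotto's construction, and it is where the heavy explicit formulas for $E'$, $E''$, and $|DF^2(E')|$ are unavoidable; once these are granted by~(ii), the remainder of the argument is bookkeeping against the definition.
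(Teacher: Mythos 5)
Your proposal is correct and follows essentially the same route as the paper: Lemma~\ref{lmchaos} together with condition (i) gives that $E_2$ is expanding on $U(E_2)$, condition (ii) supplies the snap-back orbit $E'\to E''\to E_2$ with nonvanishing Jacobians (the paper leaves the chain-rule factorization $|DF^2(E')|=|DF(E'')|\,|DF(E')|$ implicit, which you spell out), and Definition~\ref{defi1} plus Lemma~\ref{th7} then yield Marotto chaos. Your closing remark that the real analytic burden --- existence of a real preimage inside $U(E_2)$ --- is shifted into hypothesis (ii) is exactly how the paper structures its argument as well.
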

\section{Numerical simulations}

In this section, we will use Matlab R2023a and Maple 2023 to simulate the dynamic properties of system \eqref{eq2.1} to verify our results obtained in Sections 3-6.

From Theorem \ref{flipE1}, when the parameter $\Lambda$ crosses the region $\{\lambda>0,\mu=3,\beta>0\}$, system \eqref{eq2.1} produces a supercritical flip bifurcation.
Therefore, let $\mu$ be the bifurcation parameter. Setting $\lambda=2.9$, $\beta=3.03$ and employing the software Matlab R2023a, we plot the bifurcation diagram of system \eqref{eq2.1} with an initial value $(x_0,y_0,z_0)=(0.665,0.010,0.010)$ in Figure \ref{flipe1} in the $(\mu,x,y)-$space. The corresponding Lyapunov exponents diagram is shown in Figure \ref{lya-E1}.

\begin{figure}[htbp]
\T\T\T\T\T\T\T\T\T\T\T\T\T\T\T\T\T\T\T\T\T\T\T\T\T\T\T\T\T\T\T\T\T\T\T\T
\subfigure[Flip bifurcation diagram at the fixed point $E_1$ when the parameter $\Lambda$ crosses $\{\lambda>0,\beta>1,\mu=3\}$]{
\includegraphics[width=7.5cm]{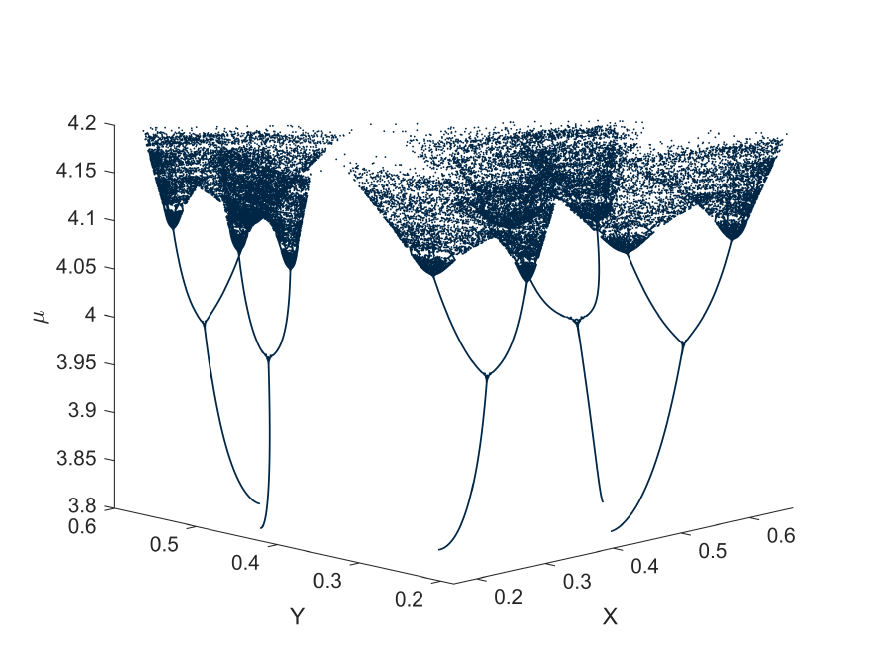}
\label{flipe1}
}
\quad
\subfigure[Lyapunov exponents corresponding to (a)]{
\includegraphics[width=7.5cm]{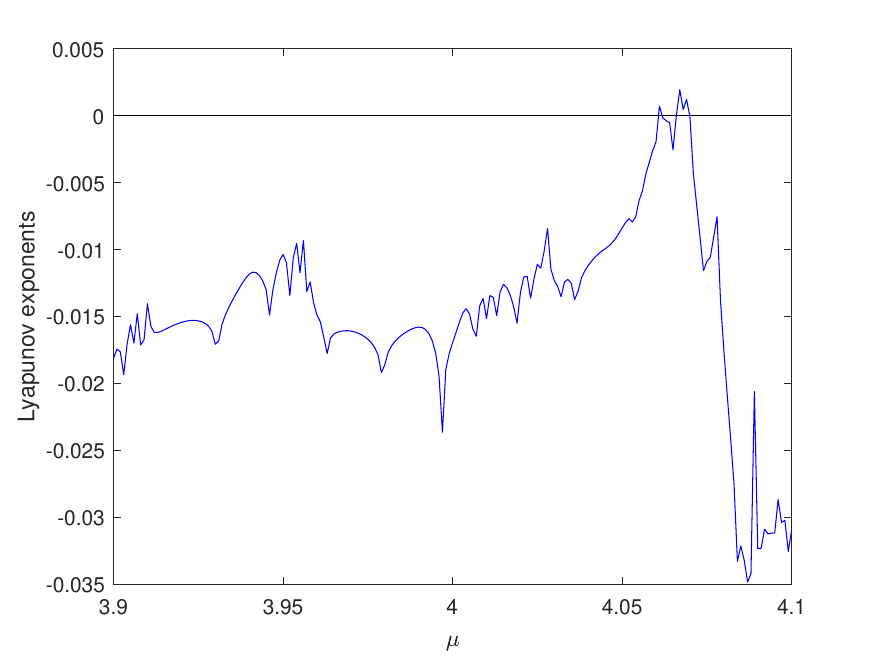}
\label{lya-E1}
85}
\caption{Flip bifurcation diagram and corresponding Lyapunov exponents diagram of system \eqref{eq2.1} at fixed point $E_1$}
\end{figure}
Besides, from Theorem \ref{flipE2}, when the parameter $\Lambda$ crosses the region $\{\lambda>0,\mu=-\beta/(\beta-3),\beta=2\}$, system \eqref{eq2.1} can also undergo a subcritical flip bifurcation by taking $\mu$ as the bifurcation parameter. Setting $\lambda=5.4999$, $\beta=2.14$ and using Matlab R2023a, the bifurcation diagram of system \eqref{eq2.1} with an initial value $(x_0,y_0,z_0)=(0.500,0.325,0.005)$ is plotted in Figure \ref{flipe2} in the $(\mu,x,y)-$space. The corresponding Lyapunov exponents diagram is shown in Figure \ref{lya-E2}.
\begin{figure}[htbp]
\T\T\T\T\T\T\T\T\T\T\T\T\T\T\T\T\T\T\T\T\T\T\T\T\T\T\T\T\T\T\T\T\T\T\T\T
\subfigure[Flip bifurcation diagram at the fixed point $E_2$ when the parameter $\Lambda$ crosses $\{\lambda>0,\beta=2,\mu=-3\beta/(\beta-3)\}$]{
\includegraphics[width=7.5cm]{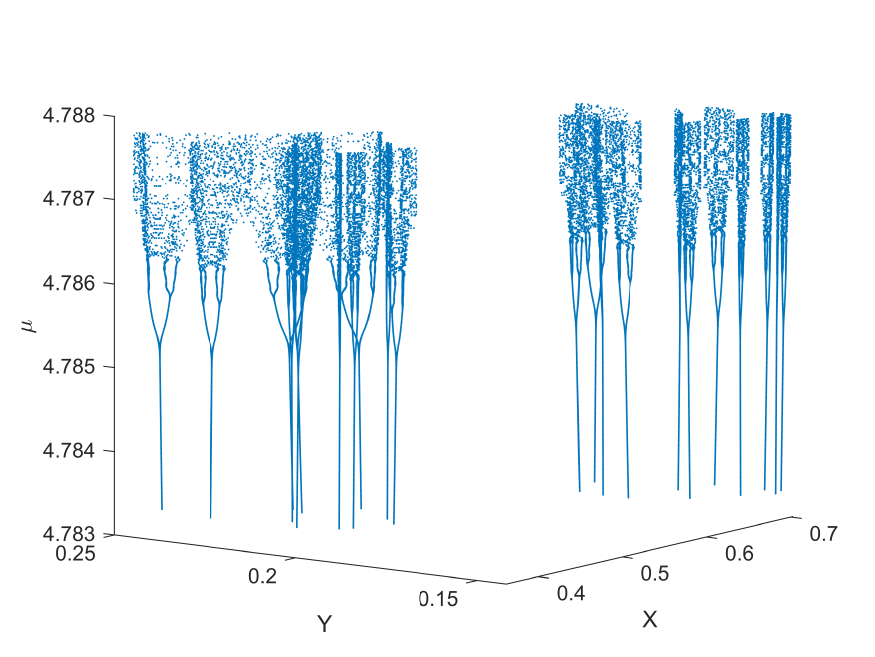}
\label{flipe2}
}
\quad
\subfigure[Lyapunov exponents corresponding to (a)]{
\includegraphics[width=7.5cm]{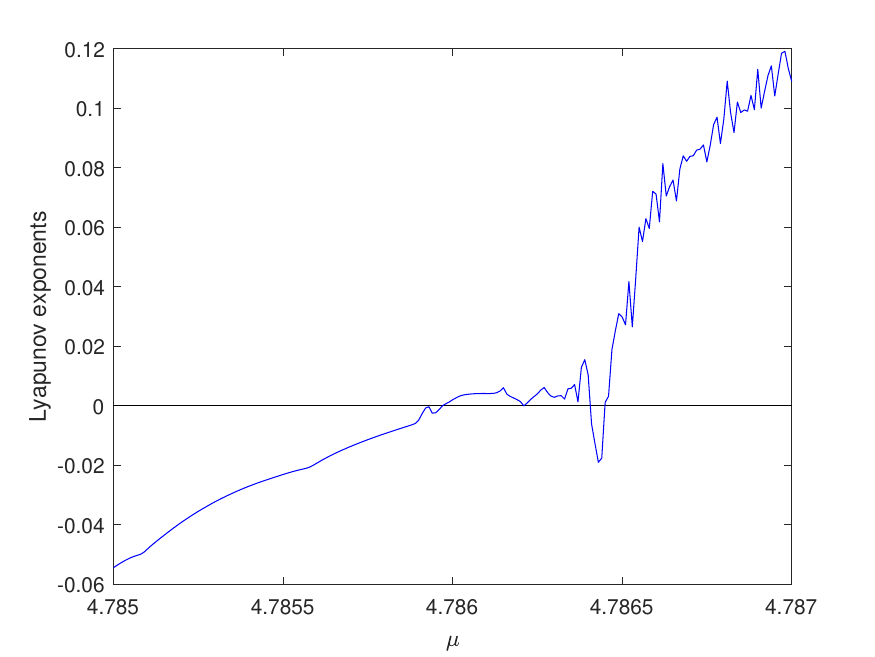}
\label{lya-E2}
}
\caption{Flip bifurcation diagram and corresponding Lyapunov exponents diagram of system \eqref{eq2.1} at fixed point $E_2$}
\end{figure}

\begin{figure}
\begin{center}
\includegraphics[width=10cm]{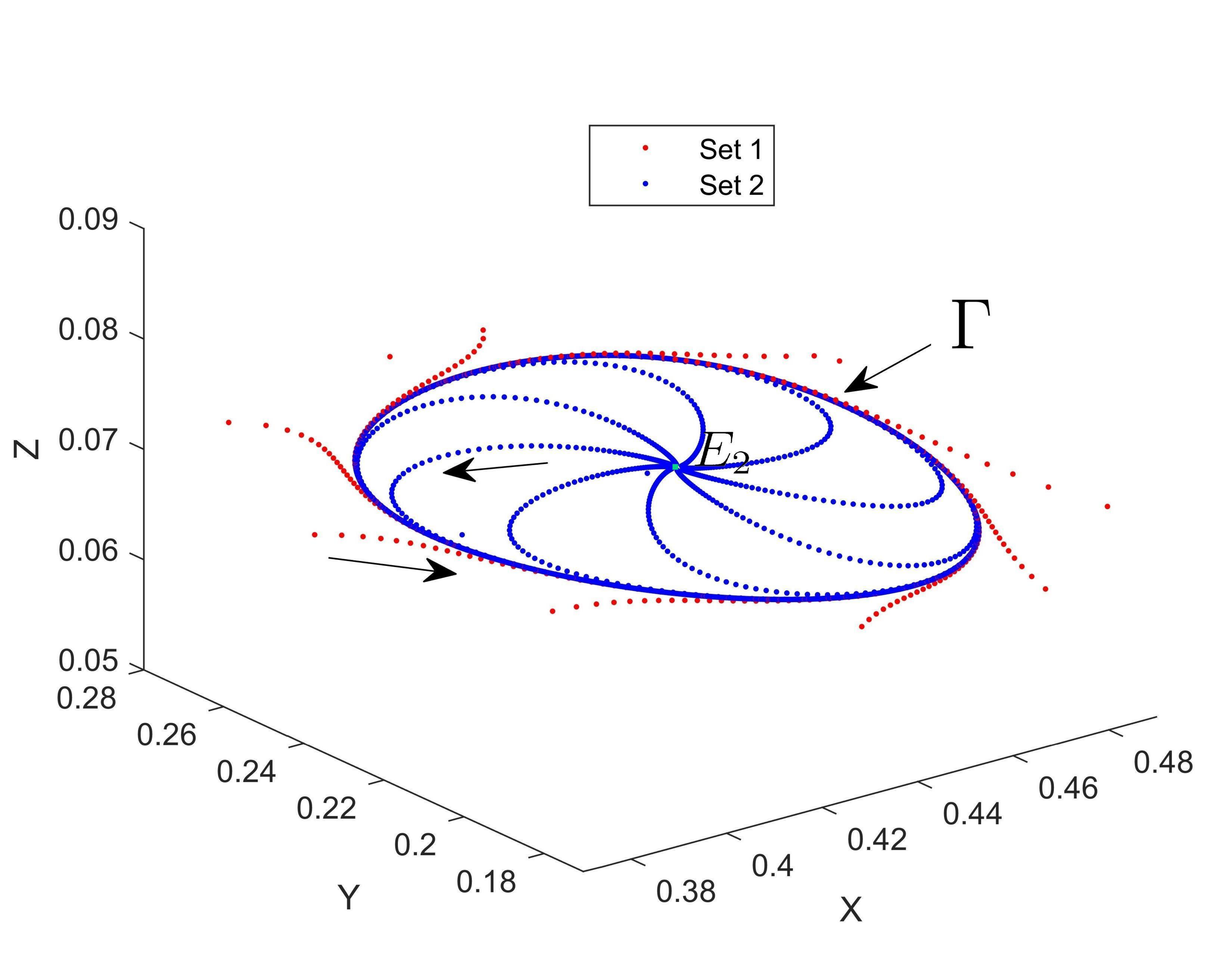}
\caption{An invariant circle $\Gamma$ generated from the Neimark-Sacker bifurcation as the parameter $\Lambda$ crosses $\mathfrak{L}$}
\label{ns}
\end{center}
\end{figure}

According to Theorem~\ref{nsE2}, system \eqref{eq2.1} undergoes a Neimark-Sacker bifurcation near $E_2$ and produces a unique invariant circle when the parameter $\Lambda$ crosses the region $\{\lambda>0,\mu=\beta/\beta-2,\beta>9/4,\beta\neq7/3,~\beta\neq5/2,
\beta\neq9/2+\sqrt{21}/2\}$.
Thus, setting the parameter $(\lambda,\mu,\beta)=(4.444,3.710,2.734)$, and taking two initial values $(x_{01},y_{01},z_{01})=(0.366324,0.364360,0.040000)$ and $(x_{02},y_{02},z_{02})=(0.39123400,0.37930200,0.00000011)$, which are near the fixed point $E_2$ and the invariant circle $\Gamma$ in Figure \ref{ns}.
From Figure \ref{ns}, we see two orbits with different colours, and an invariant circle $\Gamma$ at the junction of two orbits. The blue orbit leaves fixed point $E_2$ and tends to $\Gamma$ and the red one tends to $\Gamma$, implying that system \eqref{eq2.1} undergoes a Neimark-Sacker bifurcation near $E_2$.

Next, we will simulate the bifurcation phenomena of the $1:2$, $1:3$ and $1:4$ resonances of system \eqref{eq2.1} to verify Theorems \ref{th1-2}-\ref{th1-4}.

We will utilize Matlab R2023a to simulate the dynamical behaviors of system \eqref{eq2.1} near the 1:2 resonance region. From Theorem~\ref{th1-2}, as $(\lambda,\mu,\beta)\in\{(\lambda,\mu,\beta)|\lambda>0,\beta>9/4,\beta/(\beta-2)+O(|(\beta-9/4)|)^2<\mu<41\beta/(26-7\beta)+O(|(\beta-9/4)|)^2\}$, system \eqref{eq2.1} undergoes a Neimark-Sacker bifurcation and generates a unique invariant circle $\Gamma$, while $\Gamma$ coexists with the unstable period-two orbit $\{Q_{11},Q_{12}\}$ from a subcritical flip bifurcation.
Hence, taking $(\lambda,\mu,\beta)=(1.0000,2.2554,8.9966)$ and the initial values $(x_{11},y_{11},z_{11})=(0.444100,0.445100,0.000001)$ and
$(x_{12},y_{12},z_{12})=(0.417500,0.455100,0.000001)$, after iterating $10^5$ steps, we obtain Figure \ref{2-1}.
Additionally, setting the initial value $(x_{13},y_{13},z_{13})=(0.418300,0.455500,0.000001)$ while maintaining the parameter values and iteration count, we get a blue orbit approaching $\Gamma$ from outside of $\Gamma$ and a red orbit diverging from the fixed point $E_2$ and approaching $\Gamma$ in Figure \ref{2-2}. This indicates that the invariant circle $\Gamma$ coexists with the unstable period-two orbit $\{Q_{11},Q_{12}\}$. In order to verify the disappearance of the invariant circle when the parameter $\Lambda$ crosses the region $H_{2r}$, we select the initial value $(x_{14},y_{14},z_{14})=(0.443600,0.447200,0.000001)$, keep the parameter values and iteration count, then the period-two orbit $\{Q_{11},Q_{12}\}$ is found in $\Gamma$, which looks like a "rowing boat" with vertexes in Figure \ref{2-3}.
Finally, setting $(\lambda,\mu,\beta)=(1.0000,2.2513,8.9050)$ and the initial values $(x_{11},y_{11},z_{11})=(0.444100,0.445100,0.000001)$, $(x_{15},y_{15},z_{15})=(0.405830,0.458650,0.000001)$, it is shown from the red orbit in Figure \ref{2-4} that the invariant cycle disappears.

\begin{figure}[htbp]
\T\T\T\T\T\T\T\T\T\T\T\T\T\T\T\T\T\T\T\T\T\T\T\T\T\T\T\T\T\T\T\T\T\T\T\T
\subfigure[$(\lambda,\mu,\beta)=(1.0000,2.2554,8.9966)$]{
\includegraphics[width=7.5cm]{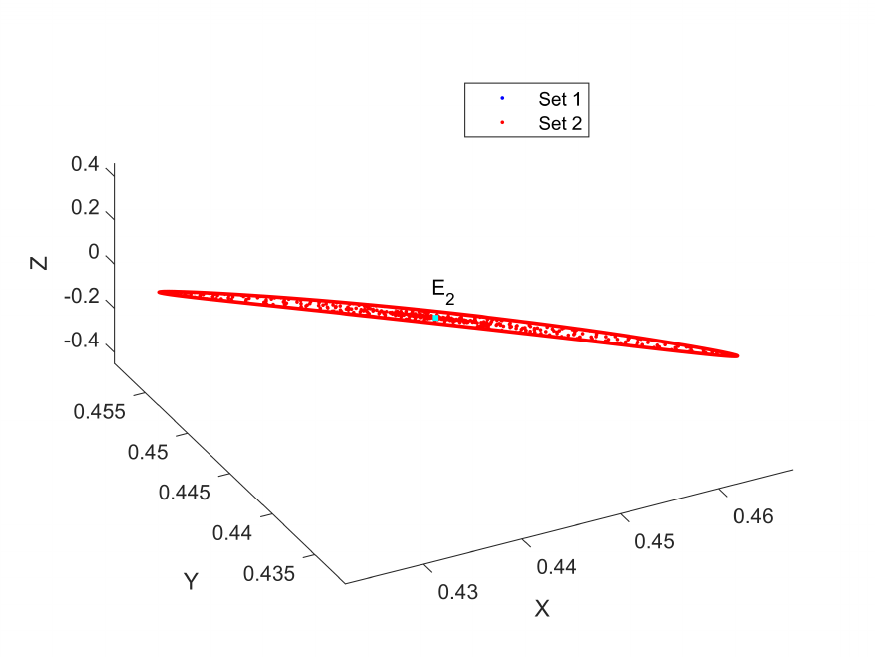}
\label{2-1}
}
\quad
\subfigure[$(\lambda,\mu,\beta)=(1.0000,2.2554,8.9966)$]{
\includegraphics[width=7.5cm]{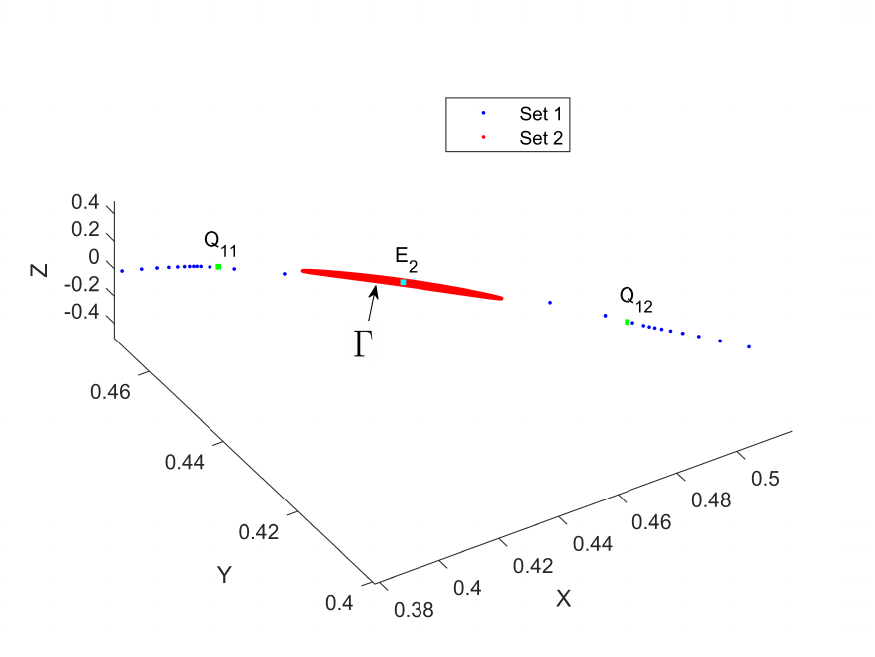}
\label{2-2}
}
\T\T\T\T\T\T\T\T\T\T\T\T\T\T\T\T\T\T\T\T\T\T\T\T\T\T\T\T\T\T\T\T\T\T\T\T
\subfigure[$(\lambda,\mu,\beta)=(1.0000,2.2554,8.9966)$]{
\includegraphics[width=7.5cm]{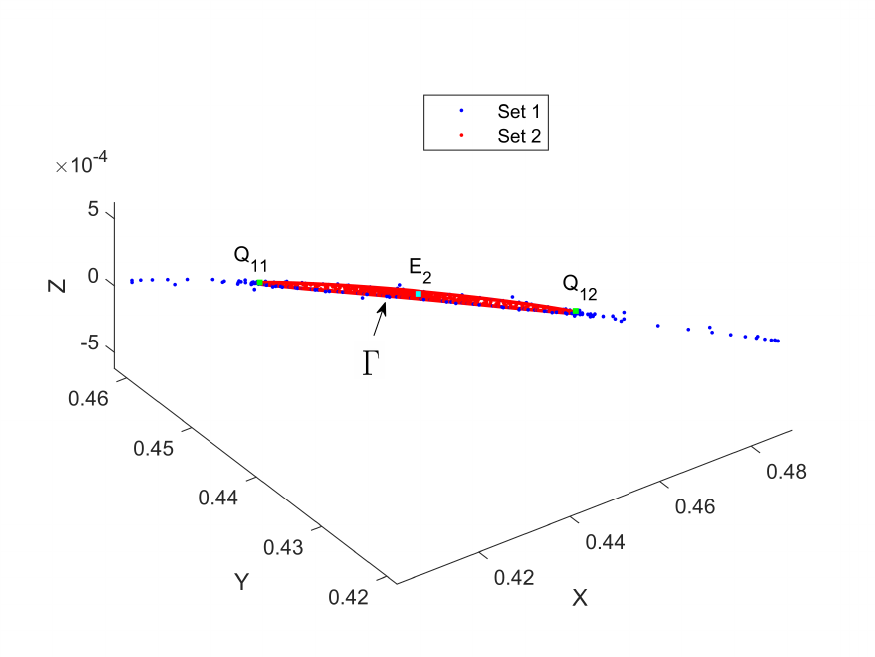}
\label{2-3}
}
\quad
\subfigure[$(\lambda,\mu,\beta)=(1.0000,2.2513,8.9050)$]{
\includegraphics[width=7.5cm]{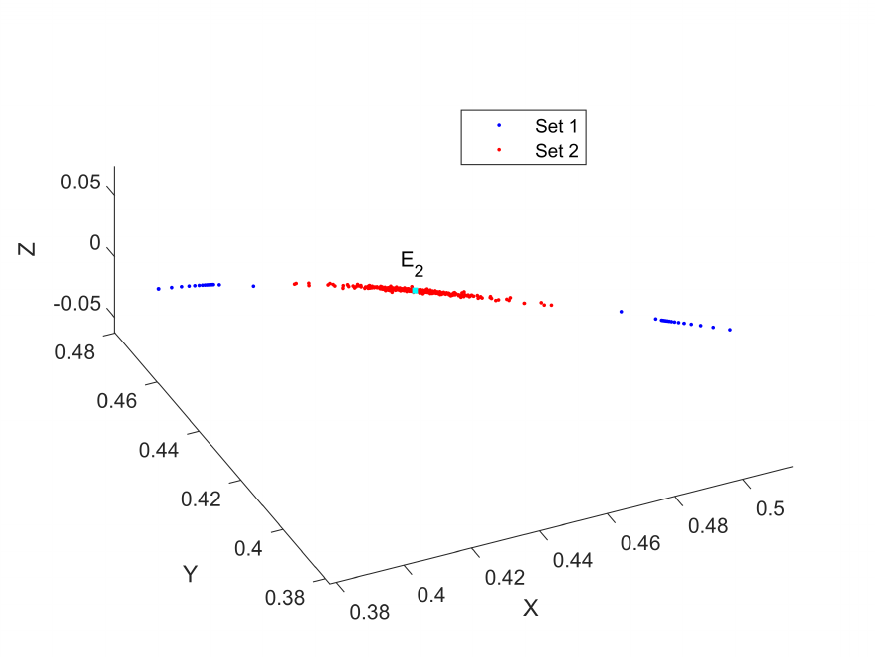}
\label{2-4}
}
\caption{Phase portraits of 1:2 resonance in system \eqref{eq2.1} }
\end{figure}
In order to verify the 1:3 resonance in system \eqref{eq2.1}, we first set the parameter $(\lambda,\mu,\beta)=(1.00000,2.34027,6.87546)$ and select the initial values $(x_{21},y_{21},z_{21})=(0.4261325,0.429845,0.000010)$, $(x_{22},y_{22},z_{22})=(0.4261325,0.429745,0.000010)$. It is shown that three blue orbits diverge towards infinity, three red orbits converge towards the fixed point $E_2$, and their junction form a period-three saddle point cycle $\{T_{11}, T_{12}, T_{13}\}$ in Figure \ref{3-1}.
According to Theorem \ref{th1-3}, when the parameter $\Lambda$ crosses the region $\mathfrak{L}^{''}_3$, system \eqref{eq2.1} undergoes a supercritical Neimark-Sacker bifurcation and produces a unique invariant circle. Thus, taking $(\lambda,\mu,\beta)=(1.00000,2.34050,6.87546)$ and the initial values $(x_{23},y_{23},z_{23})=(0.42732,0.42734,0.00001)$, $(x_{24},y_{24},z_{24})=(0.4222124,0.4256297,0.0000100)$, we obtain green and red orbits, and the invariant circle $\Gamma$ (Figure \ref{3-2}). From Figure \ref{3-2}, we see that the green orbit originating from $(x_{23},y_{23},z_{23})$ approaches $\Gamma$ from the interior, while the red orbit from $(x_{24},y_{24},z_{24})$ approaches $\Gamma$ from the exterior. Theorem \ref{th1-3} also indicates that the invariant circle $\Gamma$ in system \eqref{eq2.1} coexists with the period-three saddle point cycle $\{T_{11}, T_{12}, T_{13}\}$. By selecting $(x_{25},y_{25},z_{25})=(0.4261325,0.4298450,0.0000100)$ as the initial value, we obtain the blue orbits. Besides, a period-three saddle point cycle $\{T_{11}, T_{12}, T_{13}\}$ exists between the red and blue orbits (Figure \ref{3-2}).
Similarly, setting $(\lambda,\mu,\beta)=(1.00000,2.34060,6.87546)$ and the initial values $(x_{26},y_{26},z_{26})=(0.4261325,0.4248450,0.0000100)$, $(x_{27},y_{27},z_{27})=(0.4237821240,0.4258796297,0.0000100000)$, $(x_{28},y_{28},z_{28})=(0.42732,0.42734,0.00001)$, we get three orbits of different colors: blue, red, and green (Figure \ref{3-3}). Figure \ref{3-3} shows that the invariant circle $\Gamma$ coexists with the period-three saddle point cycle $\{T_{11}, T_{12}, T_{13}\}$.
Moreover, taking $(\lambda,\mu,\beta)=(1.00000,2.34110,6.87546)$, and the initial values $(x_{29},y_{29},z_{29})=(0.4222124,0.4256297,0.0000100)$, $(x_{210},y_{210},z_{210})=(0.42732,0.42734,0.00001)$, we get blue and red orbits (see Figure \ref{3-4}). We see that there exists an invariant circle $\Gamma$ at the junction of the blue and red orbits. Furthermore, a period-three saddle point cycle $\{T_{11}, T_{12}, T_{13}\}$ lies in the invariant circle $\Gamma$.
Subsequently, setting $(\lambda,\mu,\beta)=(1.00000,2.34130,6.87546)$, and the initial values $(x_{211},y_{211},z_{211})=(0.4222124,0.4256297,0.0000100)$, $(x_{212},y_{212},z_{212})=(0.42732,0.42734,0.00001)$, we obtain red and blue orbits, where the red orbit diverged from the fixed point $E_2$ towards infinity. Further, the invariant circle $\Gamma$ starts to disappear, and a period-three saddle point cycle $\{T_{11}, T_{12}, T_{13}\}$ appears at the junction of the blue and red orbits (Figure \ref{3-5}).
Additionally, selecting $(\lambda,\mu,\beta)=(1,2.339,6.94)$ and the initial values $(x_{213},y_{213},z_{213})=(0.4232124,0.4282970,0.0000100)$, $(x_{214},y_{214},z_{214})=(0.42732,0.42834,0.00001)$, we obtain blue and red orbits diverging from the fixed point $E_2$ that toward infinity, respectively (Figure \ref{3-6}). In this case, the invariant circle disappears, but the period-three saddle point cycle $\{T_{11}, T_{12}, T_{13}\}$ exists.
\begin{figure}[htbp]
\T\T\T\T\T\T\T\T\T\T\T\T\T\T\T\T\T\T\T\T\T\T\T\T\T\T\T\T\T\T\T\T\T\T\T\T
\subfigure[$(\lambda,\mu,\beta)=(1.00000,2.34027,6.87546)$]{
\includegraphics[width=7.5cm]{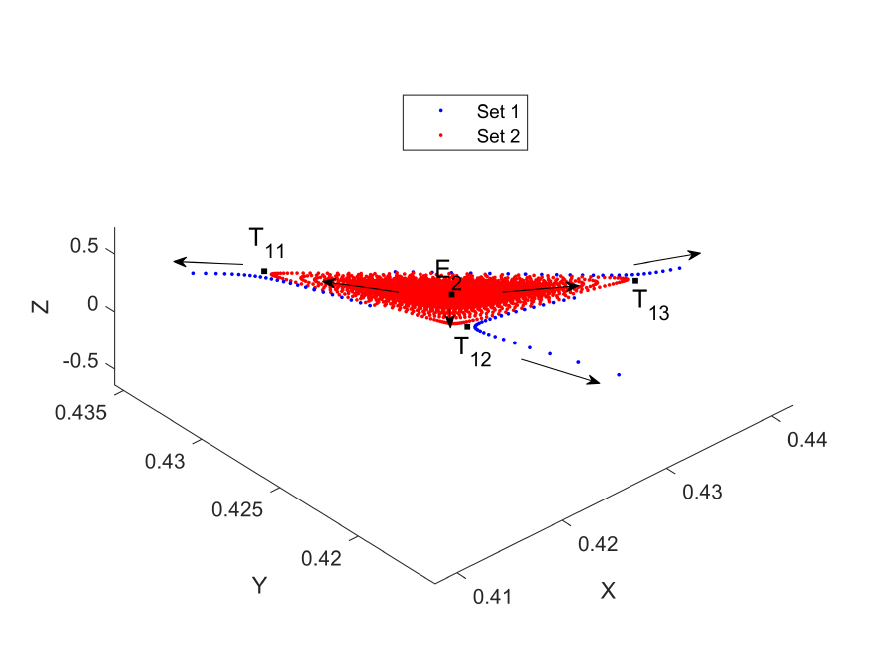}
\label{3-1}
}
\quad
\subfigure[$(\lambda,\mu,\beta)=(1.00000,2.34050,6.87546)$]{
\includegraphics[width=7.5cm]{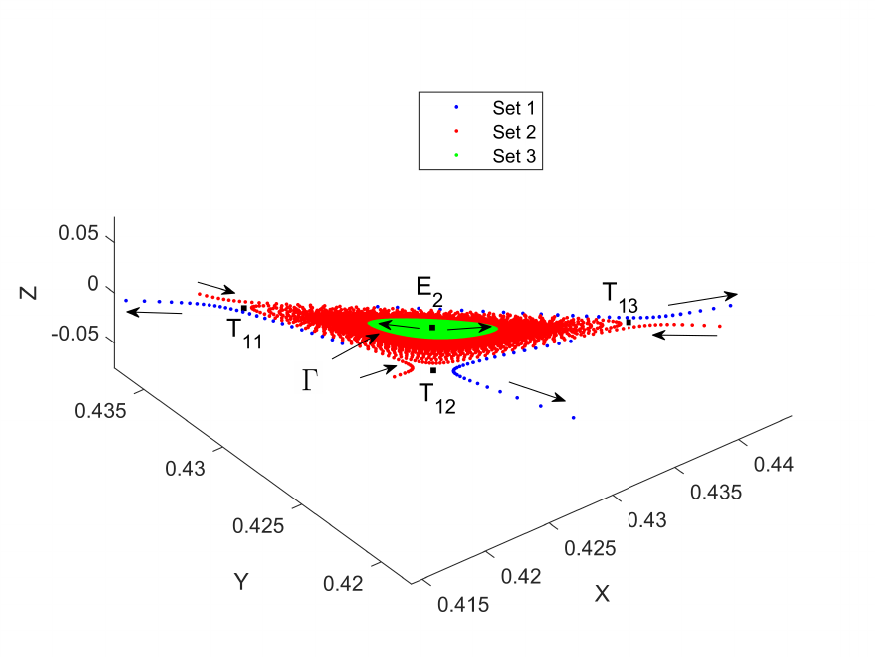}
\label{3-2}
}
\T\T\T\T\T\T\T\T\T\T\T\T\T\T\T\T\T\T\T\T\T\T\T\T\T\T\T\T\T\T\T\T\T\T\T\T
\subfigure[$(\lambda,\mu,\beta)=(1.00000,2.34060,6.87546)$]{
\includegraphics[width=7.5cm]{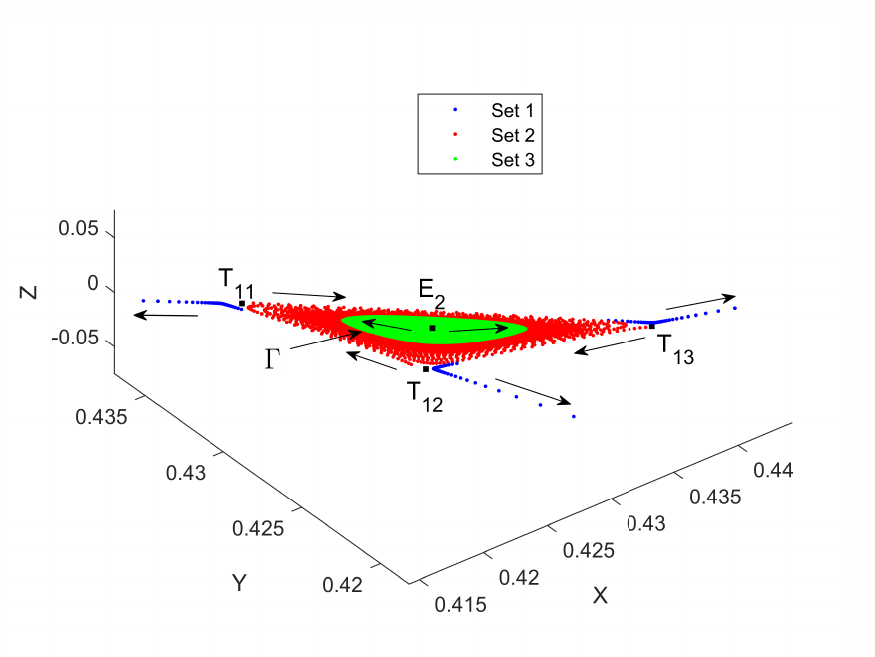}
\label{3-3}
}
\quad
\subfigure[$(\lambda,\mu,\beta)=(1.00000,2.34110,6.87546)$]{
\includegraphics[width=7.5cm]{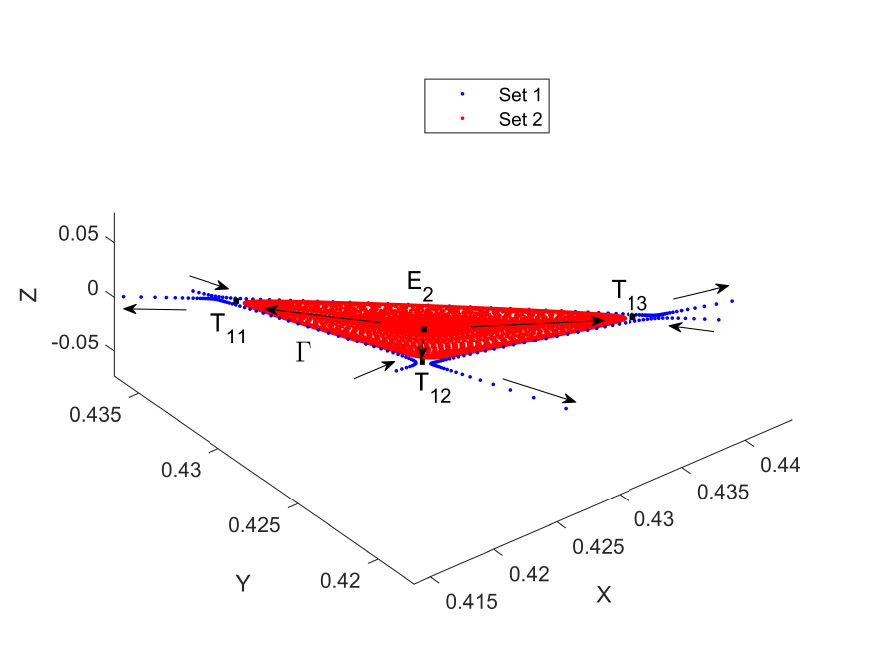}
\label{3-4}
}
\T\T\T\T\T\T\T\T\T\T\T\T\T\T\T\T\T\T\T\T\T\T\T\T\T\T\T\T\T\T\T\T\T\T\T\T
\subfigure[$(\lambda,\mu,\beta)=(1.00000,2.34130,6.87546)$]{
\includegraphics[width=7.5cm]{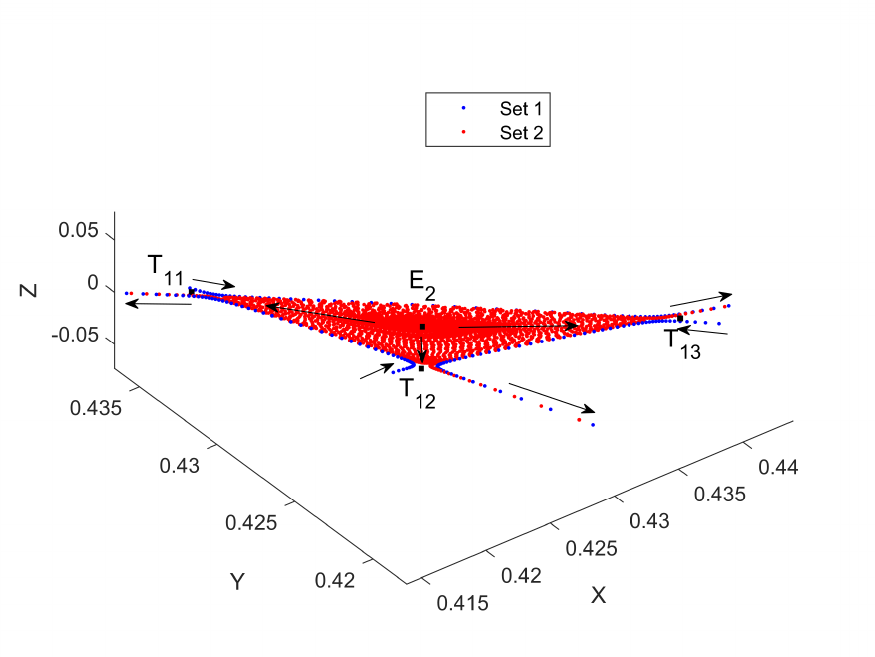}
\label{3-5}
}
\quad
\subfigure[$(\lambda,\mu,\beta)=(1.000,2.339,6.940)$]{
\includegraphics[width=7.5cm]{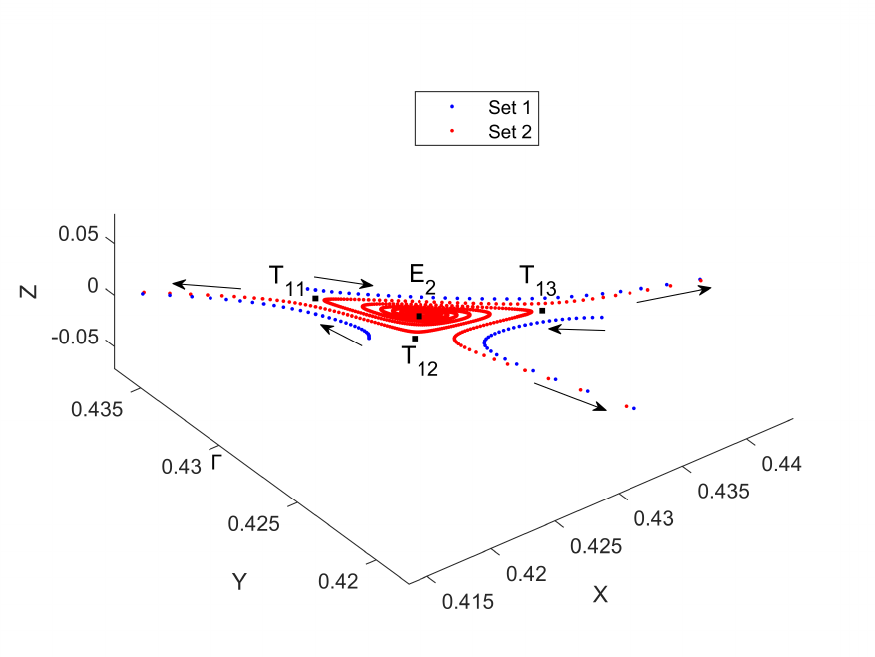}
\label{3-6}
}
\caption{Phase portraits of 1:3 resonance in system \eqref{eq2.1} }
\end{figure}

Finally, we consider the 1:4 resonance phenomenon in system \eqref{eq2.1} to illustrate our Theorem \ref{th1-4}. According to Theorem \ref{th1-4}, when the parameter $\Lambda$ crosses $\bar{\mathfrak{L}}_3$, system \eqref{eq2.1} undergoes a supercritical Neimark-Sacker bifurcation and produces a stable invariant circle surrounding $E_2$.
Hence, for a sufficiently small neighborhood within the 1:4 resonance region $\{(\lambda,\mu,\beta)|\lambda>0,\mu=5,\beta=5/2\}$, we select parameter value $(\lambda,\mu,\beta)=(1.000,2.512,4.973)$, and the initial values $(x_{30},y_{30},z_{30})=(0.442520,0.453130,0.000001)$,  $(x_{31},y_{31},z_{31})=(0.398750,0.400950,0.000001)$. The blue and red orbits which leave the fixed point $E_2$ are shown in Figure \ref{4-1}. From Figure \ref{4-1}, we see that there exists a period-four saddle point $\{K_{11},K_{12},K_{13},K_{14}\}$ at the junction of the blue and red orbits.
Besides, selecting parameter $(\lambda,\mu,\beta)=(1.000,2.503,4.990)$ and the initial values $(x_{32},y_{32},z_{32})=(0.3995100,0.4000700,0.0000001)$, $(x_{33},y_{33},z_{33})=(0.4005300,0.4075200,0.0000001)$, we simulate the blue and red orbits in Figure \ref{4-2}. From Figure \ref{4-2}, we observe that there exists an invariant circle at the junction of the blue and red orbits, and the red from the fixed point $E_2$ tends to the invariant circle, while the blue orbit tends to the invariant circle from outside. Further, setting the initial value $(x_{34},y_{34},z_{34})=(0.4005300,0.4175200,0.0000001)$ while keeping the same parameter, we obtain the green orbit in Figure \ref{4-2}. One can see that there exists a period-four saddle point $\{K_{11},K_{12},K_{13},K_{14}\}$ at the junction of the green and blue orbits.
Similarly, selecting parameter $(\lambda,\mu,\beta)=(1.0000,2.5025,4.9900)$ and the initial values $(x_{35},y_{35},z_{35})=(0.3995100,0.4000700,0.0000001)$, $(x_{36},y_{36},z_{36})=(0.4245300,0.3915200,0.0000001)$, $(x_{37},y_{37},z_{37})=(0.4045300,0.4125200,0.0000001)$, we obtain three orbits of different colors: red, blue, and green (Figure \ref{4-3}). From Figure \ref{4-3}, we observe that the invariant circle coexists with the period-four saddle point $\{K_{11},K_{12},K_{13},K_{14}\}$.
Subsequently, by selecting parameter $(\lambda,\mu,\beta)=(1.0000,2.5035,4.9900)$ and the initial values $(x_{38},y_{38},z_{38})=(0.3995100,0.4000700,0.0000001)$, $(x_{39},y_{39},z_{39})=(0.3955300,0.4175200,0.0000001)$, the invariant circle persists, the blue orbit from the invariant circle tends to infinity and the red orbit from the fixed point $E_2$ tends to the invariant circle (Figure \ref{4-4}). Meanwhile, the period-four saddle point $\{K_{11},K_{12},K_{13},K_{14}\}$ lies in the invariant circle, as shown in Figure \ref{4-4}.
Furthermore, setting parameter $(\lambda,\mu,\beta)=(1.0000,2.5035,4.9902)$ and the initial values $(x_{310},y_{310},z_{310})=(0.3995100,0.4000700,0.0000001)$, $(x_{311},y_{311},z_{311})=(0.3945300,0.4175200,0.0000001)$, we obtain the red and blue orbits in Figure \ref{4-5}. From Figure \ref{4-5}, we observe that the period-four saddle point $\{K_{11},K_{12},K_{13},K_{14}\}$ is at the junction of the red and blue orbits, and the invariant circle starts to disappear.
Then, selecting parameter $(\lambda,\mu,\beta)=(1.0000,2.5035,4.9906)$ and the initial values $(x_{312},y_{312},z_{312})=(0.3995100,0.4000700,0.0000001)$, $(x_{313},y_{313},z_{313})=(0.3935300,0.4175200,0.0000001)$, we obtain the red and blue orbits as shown in Figure \ref{4-6}. From Figure \ref{4-6}, we see that the period-four saddle point $\{K_{11},K_{12},K_{13},K_{14}\}$ is at the junction of the red and blue orbits, and the invariant circle disappears completely.

\begin{figure}[htbp]
\T\T\T\T\T\T\T\T\T\T\T\T\T\T\T\T\T\T\T\T\T\T\T\T\T\T\T\T\T\T\T\T\T\T\T\T
\subfigure[$(\lambda,\mu,\beta)=(1.000,2.512,4.973)$]{
\includegraphics[width=7.5cm]{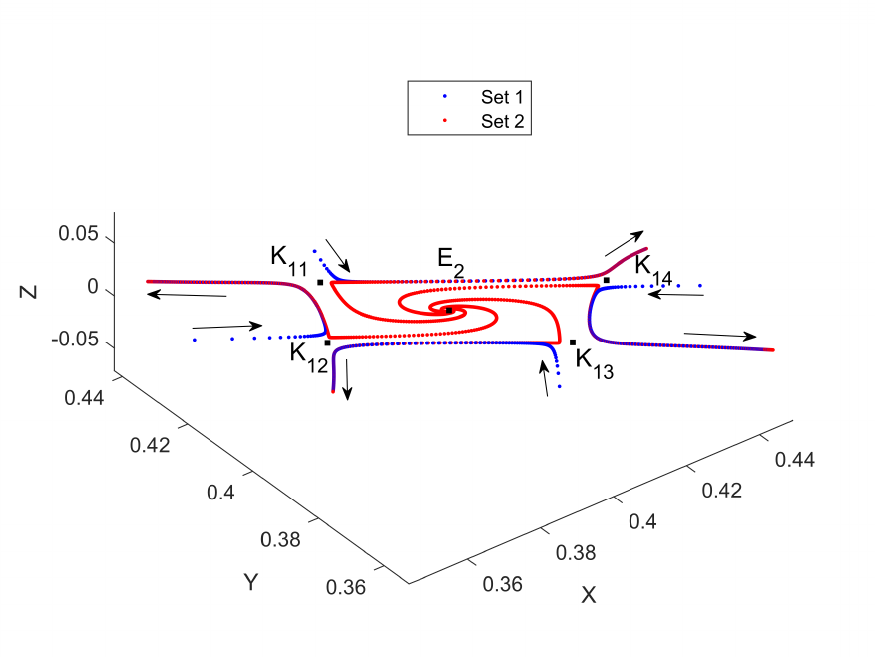}
\label{4-1}
}
\quad
\subfigure[$(\lambda,\mu,\beta)=(1.000,2.503,4.990)$]{
\includegraphics[width=7.5cm]{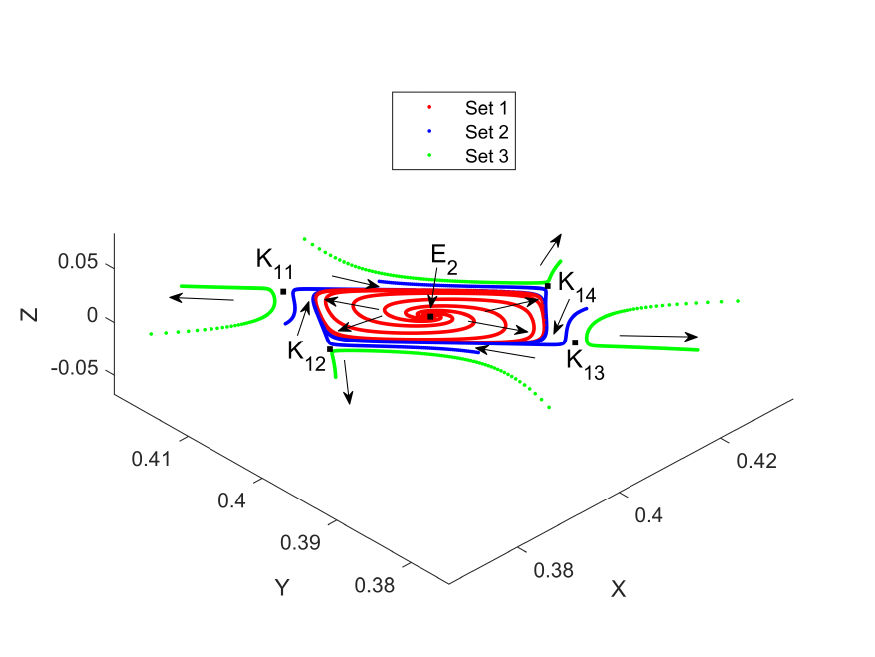}
\label{4-2}
}
\T\T\T\T\T\T\T\T\T\T\T\T\T\T\T\T\T\T\T\T\T\T\T\T\T\T\T\T\T\T\T\T\T\T\T\T
\subfigure[$(\lambda,\mu,\beta)=(1.0000,2.5025,4.9900)$]{
\includegraphics[width=7.5cm]{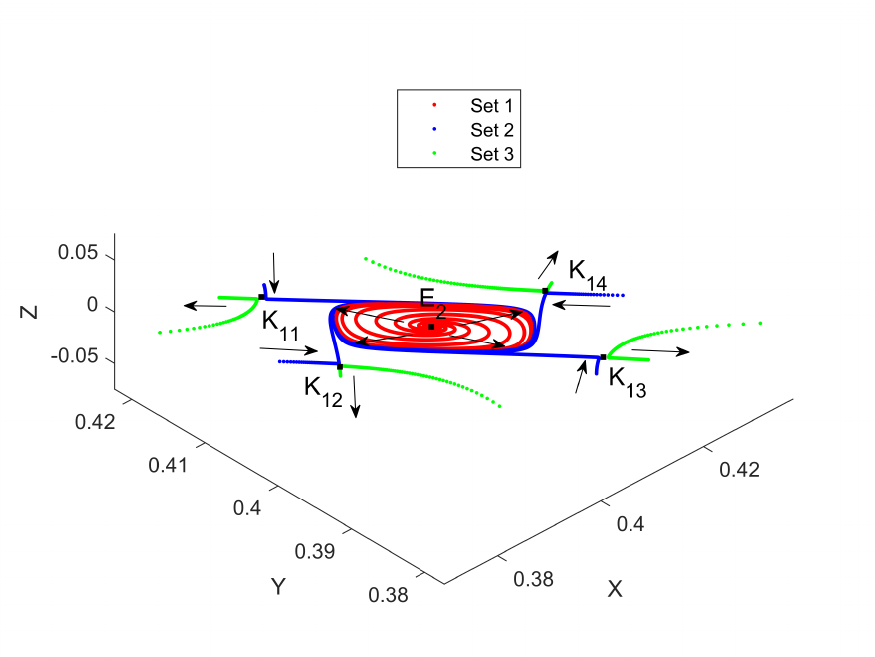}
\label{4-3}
}
\quad
\subfigure[$(\lambda,\mu,\beta)=(1.0000,2.5035,4.9900)$]{
\includegraphics[width=7.5cm]{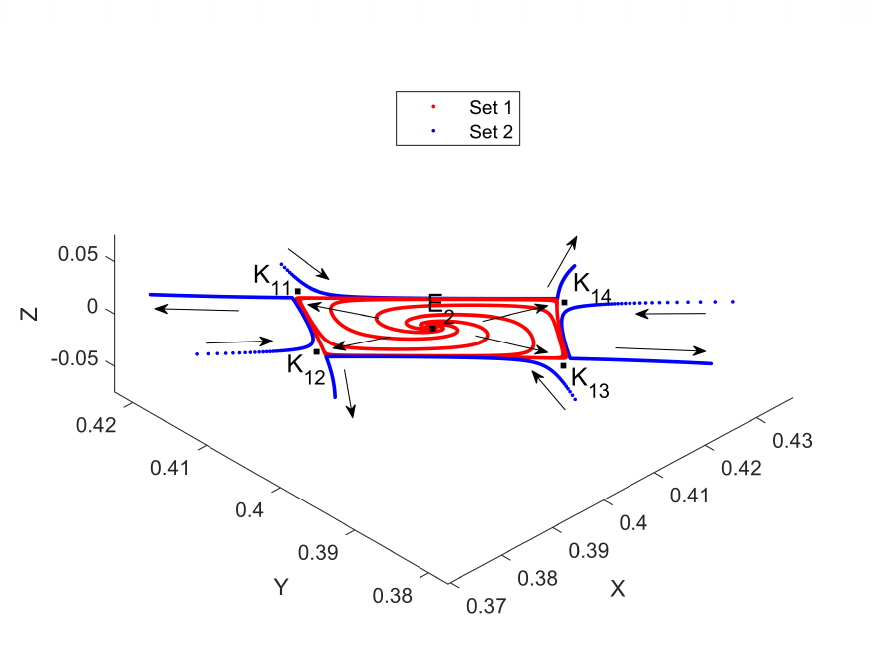}
\label{4-4}
}
\T\T\T\T\T\T\T\T\T\T\T\T\T\T\T\T\T\T\T\T\T\T\T\T\T\T\T\T\T\T\T\T\T\T\T\T
\subfigure[$(\lambda,\mu,\beta)=(1.0000,2.5035,4.9902)$]{
\includegraphics[width=7.5cm]{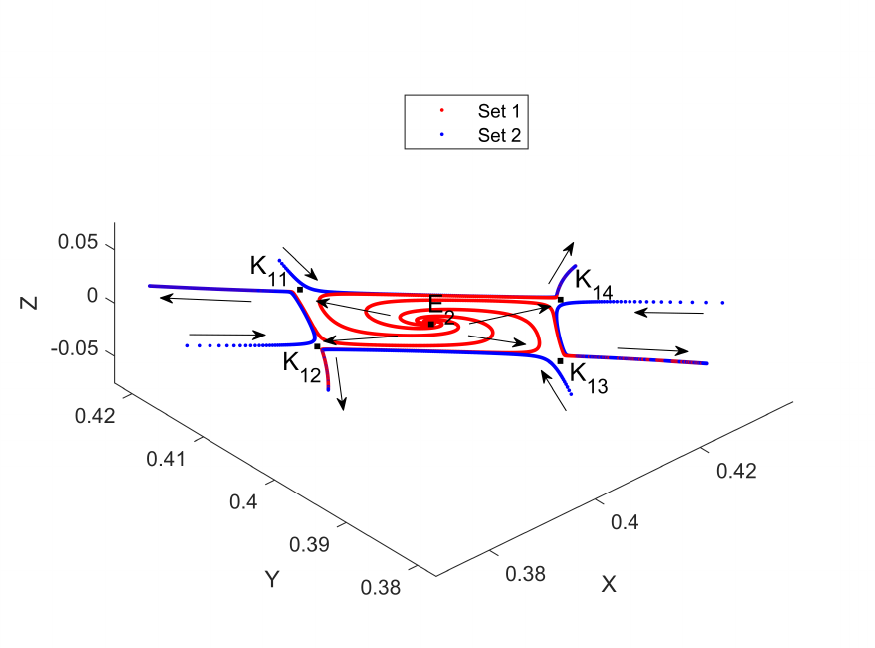}
\label{4-5}
}
\quad
\subfigure[$(\lambda,\mu,\beta)=(1.0000,2.5035,4.9906)$]{
\includegraphics[width=7.5cm]{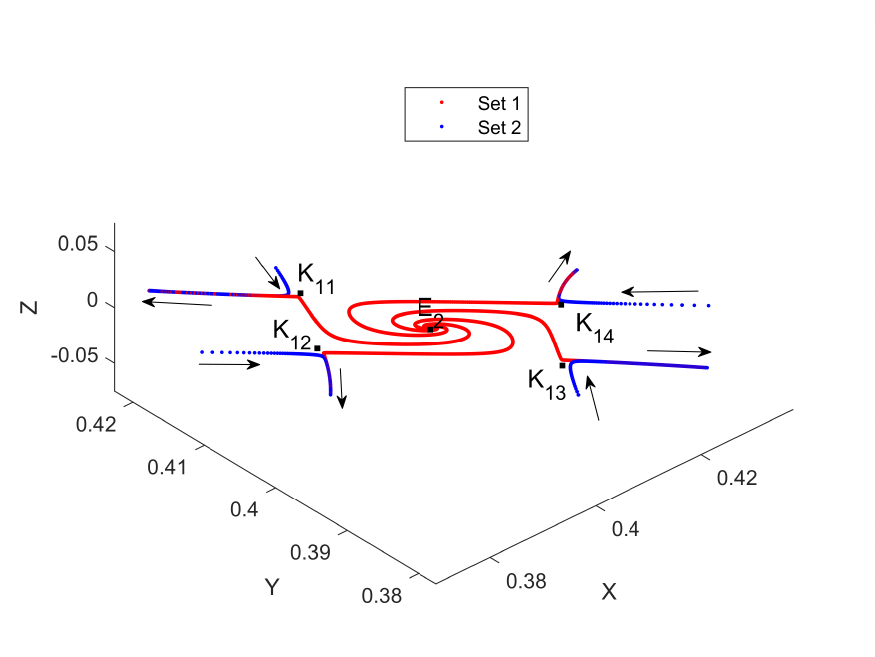}
\label{4-6}
}
\caption{Phase portraits of 1:4 resonance in system \eqref{eq2.1} }
\end{figure}

In what follows, we will give numerical simulations for Arnold tongue $\mathcal{A}_{1/5}$ and period-five orbits on the invariant closed curve generated from the Neimark-Sacker bifurcation. Taking $n=1$ and $m=5$, we get $\beta_*=(4 \cos\! \left(2\,\pi/5\right)-5)/(2 \left(\cos\! \left(2\,\pi/5\right)-1\right))\approx2.723606820$ and $\mu_*=3.763931938$. In this case, mapping \eqref{a2} is written as
\begin{eqnarray*}
&&z\mapsto t_1\,z+\mathbb{A}_{2,1}\,z^{2}\,\bar{z}^{1}+\mathbb{B}\,\bar{z}^{4}+O(|z|^5),
\end{eqnarray*}
where
\begin{eqnarray*}
&&t_1\approx0.3090170156 - 0.9510565096{\bf i},\\
&&\mathbb{A}_{2,1}\approx-1.138610132 + 6.638606465{\bf i},\\
&&\mathbb{B}\approx8.659782235 + 8.309112425{\bf i}.
\end{eqnarray*}
Further, one can check that
\begin{eqnarray*}
&&\check{\varrho}_{3}(0)\approx-6.665539811,~~\check{\varrho}_{2}(0)\approx0.9685596305,~~|\mathbb{B}|\approx12.00138117.
\end{eqnarray*}
By Theorem \ref{th5-1}, the Arnold tongue $\mathcal{A}_{1/5}$ is given by
\begin{eqnarray*}
\mathcal{A}_{1/5}=\left\{(\mu,\beta)\bigg|T_{-}+\frac{2\pi}{5}<arctan\left(\frac{\sqrt{4\beta^{2} \mu-4\beta^{2}-4\beta\mu-\mu^{2}}}{2 \beta-\mu}\right)<T_{+}+\frac{2\pi}{5}\right\},
\end{eqnarray*}
where
\begin{eqnarray*}
T_{\pm}\approx-0.1453085056\,\left(\sqrt{{\frac{\mu  \left(\beta -2\right)}{\beta}}}-1\right)
\pm20.33059665\left(\sqrt{{\frac{\mu  \left(\beta -2\right)}{\beta}}}-1\right)^{\frac{3}{2}}.
\end{eqnarray*}

Choosing a parameter $(\lambda,\mu,\beta)=(2.85000000,2.72622561,3.76422131)$, the Arnold tongue $\mathcal{A}_{1/5}$, i.e., the red region in Figure \ref{Arnold-1}, is simulated in the $(\mu, \beta)$-space by Maple 2023. Besides, using the same parameter in Matlab R2023a, we obtain a stable period-five orbit $\{S_1,S_2,S_3,S_4,S_5\}$ after $5\times10^5$ steps iterating on the invariant circle generated from the Neimark-Sacker bifurcation (see five green points in Figure \ref{Arnold-2}). The orbit of period-five is the limit set of the blue and red orbits with the initial values $(x_{40}, y_{40},z_{40})=(0.3800000,0.2500000,0.0022728)$ near the fixed point $E_2$ and $(x_{41}, y_{42},z_{41})=(0.3480,0.4032,0.0041)$ outside the invariant circle. Further, there exists a period-five saddle point $\{U_1,U_2,U_3,U_4,U_5\}$ near the blue points on the invariant circle.

Finally, by Theorem \ref{E2hd}, we numerically simulate the chaotic behaviors of system \eqref{eq2.1} in the sense of Marotto. Taking $(\lambda,\mu,\beta)=(9.14,2.50,3.36)$, system \eqref{eq2.1} has a fixed point $E_2:(x^*,y^*,z^*)=(0.2976190476,0.3023809524,0)$.
The modulus of all eigenvalues of the Jacobian matrix at $E_2$ are greater than $1$.
From Theorem~\ref{E2hd}, we can find a neighborhood of $E_2$ as $U_{E_2}=\{(x,y,z)|0.233198<x<0.521422,~0.002511<y<0.303401,~0<z<0.207524\}$ such that $E_2$ is expanding. Further, a point $E': (x',y',z')=(0.6834440850,0.2042699899,0)$ is also found, satisfying $F_{\lambda,\mu,\beta}^2(E')=E_2$ and $|DF_{\lambda,\mu,\beta}^2(E')|=2.54737397\ne0$.
Therefore, $E_2$ is a snap-back repeller. Taking the initial value $(x_{0},y_{0},z_{0})=(0.300,0.302,0.001)$, which is near the fixed point $E_2$, system \eqref{eq2.1} has chaotic behaviors in the sense of Marotto, shown in Figure \ref{hdE2}. The corresponding Lyapunov exponents diagram is also shown in Figure \ref{lya-hd}.
\begin{figure}[htbp]
\T\T\T\T\T\T\T\T\T\T\T\T\T\T\T\T\T\T\T\T\T\T\T\T\T\T\T\T\T\T\T\T\T\T\T\T
\subfigure[An Arnold tongue corresponding to the rotation number 1/5]{
\includegraphics[width=7.5cm]{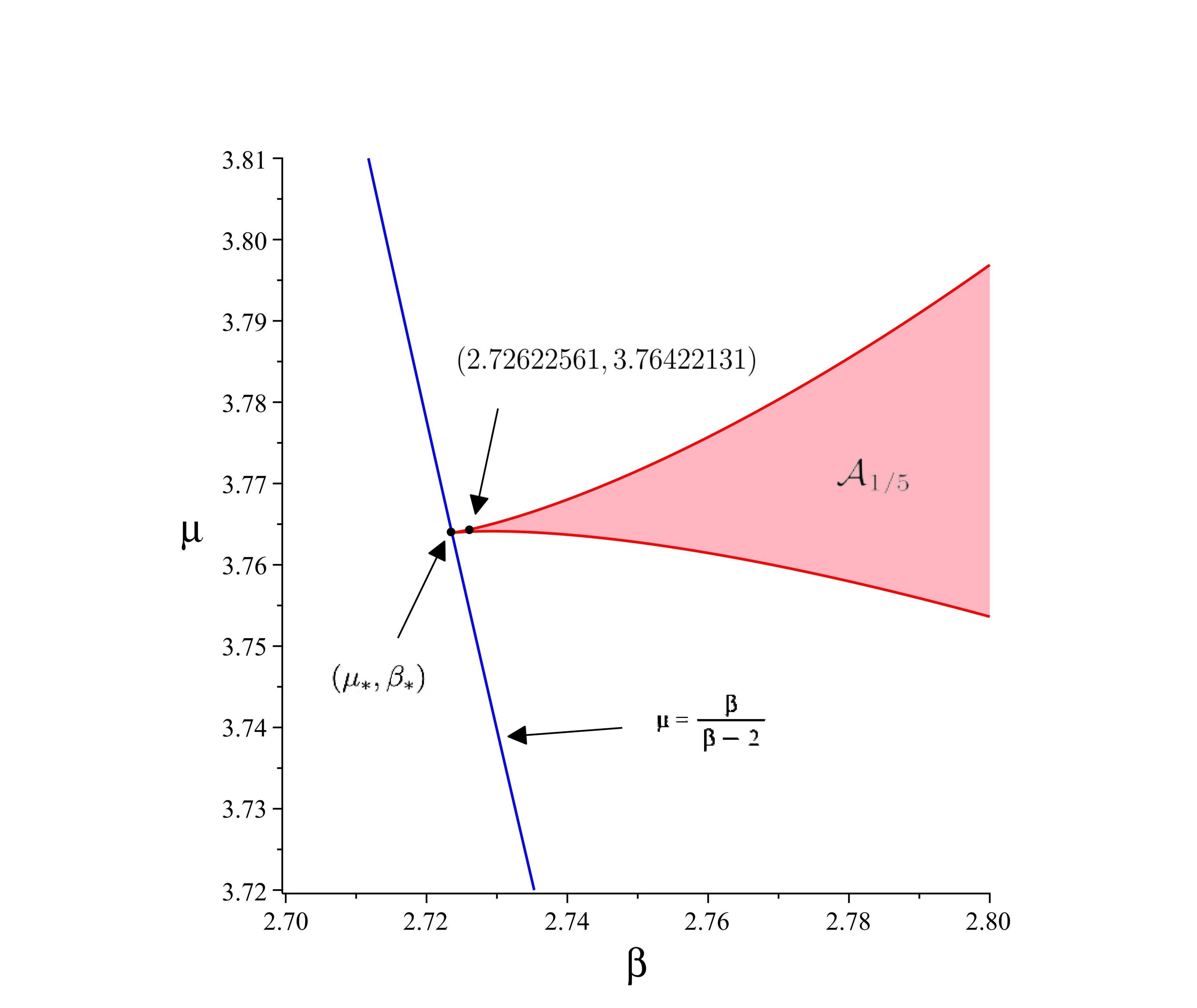}
\label{Arnold-1}
}
\quad
\subfigure[A stable period-five orbit $\{S_1,S_2,S_3,S_4,S_5\}$ on the invariant circle]{
\includegraphics[width=7.5cm]{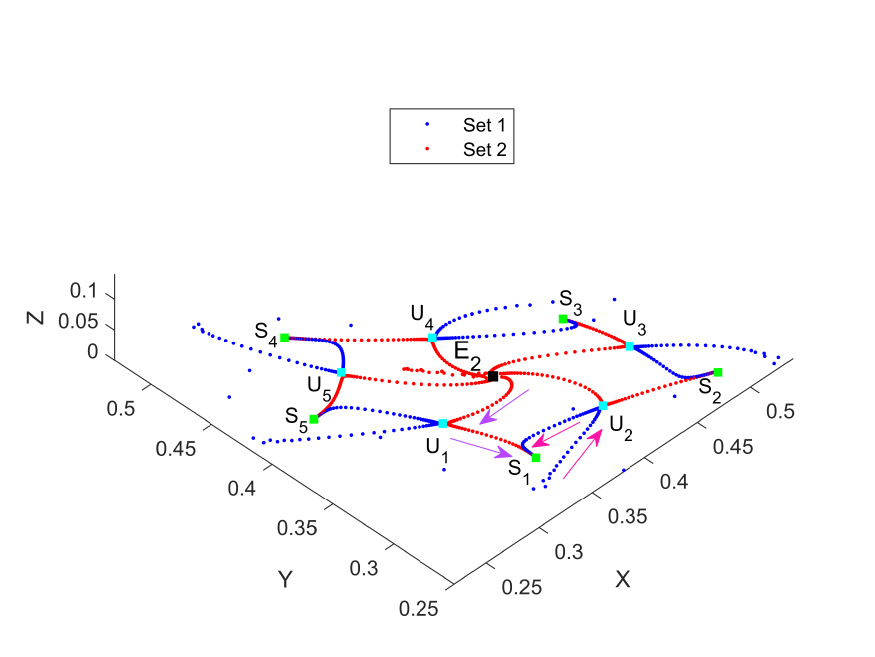}
\label{Arnold-2}
}
\caption{Arnold tongue and stable period-five orbit }
\end{figure}

\begin{figure}[htbp]
\T\T\T\T\T\T\T\T\T\T\T\T\T\T\T\T\T\T\T\T\T\T\T\T\T\T\T\T\T\T\T\T\T\T\T\T
\subfigure[The chaos diagram of system \eqref{eq2.1} in the sense of Marotto]{
\includegraphics[width=7.5cm]{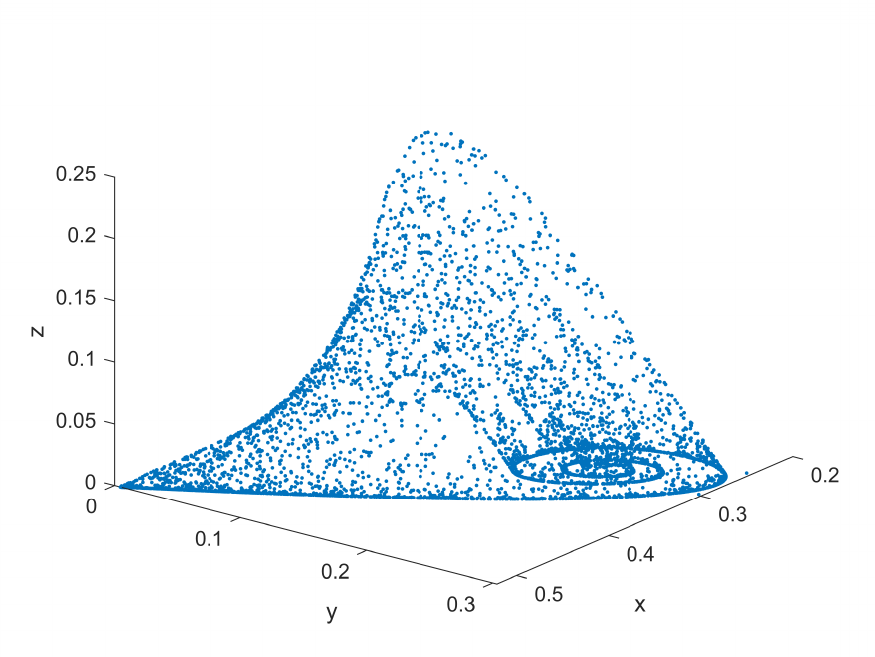}
\label{hdE2}
}
\quad
\subfigure[Lyapunov exponents diagram corresponding to (a)]{
\includegraphics[width=7.5cm]{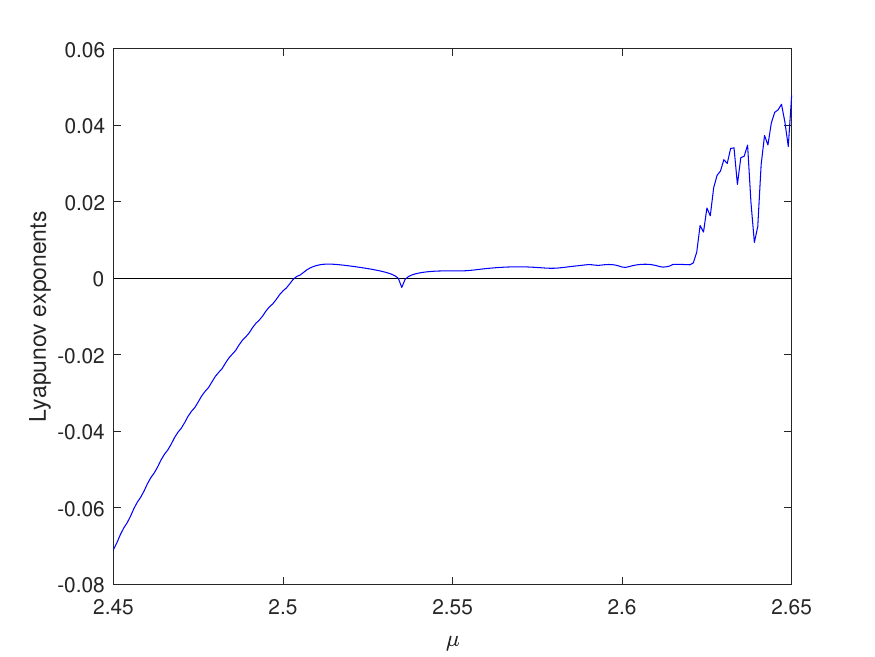}
\label{lya-hd}
}
\caption{\small The chaos diagram and corresponding Lyapunov exponents diagram of
system \eqref{eq2.1} in the sense of Marotto }
\end{figure}

\noindent{\bf Acknowledgements.}~
The authors are very grateful to the anonymous referees for their
useful comments and suggestions.

{\footnotesize

}
\end{sloppypar}
\end{document}